\numberwithin{equation}{section}
\newcommand{\CC}{\mathbb {C}}
\newcommand{\RR}{\mathbb{R}}
 \DeclareMathOperator{\dist}{dist}
\DeclareMathOperator{\supp}{supp} 
\renewcommand{\phi}{\varphi}
\newcommand{\co}{\mathbb{C}}
\newcommand{\MM}{\mathcal{M}}
\newcommand{\rl}{\mathbb{R}}
\newcommand{\he}{\mathcal{H}(E)}
\newcommand{\ho}{\mathcal{H}}
\newcommand{\heo}{\mathcal{H}(E_1)}
\newcommand{\closspan}{\overline{\rm Span}\,}
\newtheorem{Thm}{Theorem}[section]
\newtheorem{theorem}[Thm]{Theorem}
\newtheorem{example}{Example}
\newtheorem{lemma}[Thm]{Lemma}
\newtheorem{proposition}[Thm]{Proposition}
\newtheorem{remark}[Thm]{Remark}
\newtheorem{definition}{Definition}
\begin{document}
\sloppy
\title[Localization of zeros for Cauchy transforms]{Localization of zeros for Cauchy transforms}
\author{Evgeny Abakumov, Anton Baranov, Yurii Belov}
\address{Evgeny Abakumov,
\newline  University Paris-Est,  LAMA (UMR 8050), UPEM, UPEC, CNRS, F-77454, Marne-la-Vall\'ee, France
\newline {\tt evgueni.abakoumov@u-pem.fr}
\smallskip
\newline \phantom{x}\,\, Anton Baranov,
\newline Department of Mathematics and Mechanics, St.~Petersburg State University, St.~Petersburg, Russia,
\newline National Research University Higher School of Economics, St.~Petersburg, Russia,
\newline {\tt anton.d.baranov@gmail.com}
\smallskip
\newline \phantom{x}\,\, Yurii Belov,
\newline Chebyshev Laboratory, St.~Petersburg State University, St. Petersburg, Russia,
\newline {\tt j\_b\_juri\_belov@mail.ru}
}
\thanks{A.~Baranov and Yu.~Belov were supported by the Chebyshev Laboratory
(St.\,Petersburg State University) under RF Government grant
11.G34.31.0026, by JSC "Gazprom Neft", and by RFBR grant 13-01-91002-ANF-a. 
A. Baranov and Yu.~Belov were  supported by  Dmitry Zimin's Dynasty Foundation.}

\begin{abstract} We study the localization  of zeros for Cauchy transforms of discrete measures  on the real line.
This question is
motivated by the theory of canonical systems of differential
equations. In particular, we prove that the spaces of Cauchy transforms having the
localization property are in one-to-one correspondence with the
canonical systems of special type, namely, those whose  Hamiltonians
consist only of indivisible intervals accumulating on the left.
Various aspects of the localization phenomena are studied in details. Connections with the density of polynomials and other topics in analysis are discussed.
\end{abstract}

\keywords{Cauchy transforms, de Branges spaces, distribution of zeros of entire functions, canonical systems, polynomial approximation}
\subjclass{30D10, 30D15, 46E22, 41A30, 34B20}

\maketitle

\section{Introduction and main results}

The distribution of zeros of analytic and meromorphic functions is a classical theme in function theory.
Given a space of analytic functions, 
the information about the distribution of zeros of its elements is crucial for understanding the structural properties of the space.

Let $\mathcal{H}$ be an infinite-dimensional Hilbert space of
analytic functions in some domain $\Omega\subset \mathbb{C}$, with the division
property: $\dfrac{f(z)}{z-w}\in\mathcal{H}$ whenever
$f\in\mathcal{H}$ and $f(w)=0$. Thus, any finite subset $\Lambda$
of $\Omega $ is a non-uniqueness set for $\mathcal{H}$ (i.e., there
exists a nonzero function in $\mathcal{H}$ which vanishes on
$\Lambda$). Moreover, in many classical spaces (Hardy, Bergman,
Paley--Wiener, Bargmann--Fock, etc.) the zeros are even
less rigid in the sense that any sufficiently "sparse" countable
subset of $\Omega$ is also a non-uniqueness set for $\mathcal{H}$.

However,   we will show in the present paper that there exists a
natural class of spaces where the zeros of functions have  certain rigidity:
they are localized near some fixed discrete set. 
These are the spaces of  Cauchy transforms
of fast decaying measures.
The goal of this paper is to study
various aspects of this localization phenomenon and to explore its
connections to several topics in analysis. These include the weighted
polynomial approximation, de Branges spaces of entire functions,
canonical systems of differential equations.
In particular, Theorem \ref{chain1} shows that the spaces of Cauchy transforms with the 
localization property are in one-to-one correspondence with the 
canonical systems having Hamiltonian of special form, namely,
those which consist of indivisible intervals accumulating only on
the left.


\subsection{The spaces of Cauchy transforms} Let
$\mu:=\sum_n\mu_n\delta_{t_n}$ be a positive measure on
$\mathbb{R}$ such that
$\int_{\mathbb{R}}\frac{d\mu(t)}{1+t^2}<\infty$. Here $T=\{t_n\}$
is an infinite increasing sequence (one or two-sided)  such that
$|t_n|\rightarrow\infty$,  $|n|\rightarrow\infty$. 
To simplify some formulas we assume that  $0\notin T$. With
any such $\mu$ we associate the space $\mathcal{H}(T,\mu)$ of the
Cauchy transforms
$$
\mathcal{H}(T,\mu):=\biggl{\{}f:f(z)=\sum_n\frac{a_n\mu^{1/2}_n}{z-t_n},
\quad a = \{a_n\}\in\ell^2\biggr{\}}
$$
equipped with the norm
$\|f\|_{\mathcal{H}(T,\mu)}:=\|a\|_{\ell^2}$.

The Cauchy transform is a classical object in complex analysis and
have numerous applications in various fields in mathematics (see
\cite{CR} and reference therein).

The spaces $\mathcal{H}(T,\mu)$ consist of analytic in
$\mathbb{C}\setminus T$ functions and satisfy the division property.
They were studied in \cite{BMS}. These spaces are {\it
isometrically isomorphic} in a canonical way to the Hilbert spaces
of entire functions introduced by de Branges. We will discuss the
details of this isomorphism below in Subsection \ref{dbr}.


\subsection{The localization and the strong localization} In what follows we will always assume that $T$ is 
{\it a power separated sequence}: there exist numbers $C>0$ and
$N\in\mathbb{N}$ such that
\begin{equation}
\label{powsep}
|t_{n+1}-t_n|\geq C|t_n|^{-N}.
\end{equation}
Note that condition \eqref{powsep} implies that for some $c,\rho>0$
and for sufficiently large $|n|$ we have $|t_n| \ge c|n|^\rho$.

Let us introduce some notation. For an entire function $f$ we
denote by $\mathcal{Z}_f$ the set of all zeros of $f$. For the
case
$f=\sum_n\frac{a_n\mu^{1/2}_n}{z-t_n}\in\mathcal{H}(T,\mu)$
it will be convenient to modify slightly the definition of the
zero set $\mathcal{Z}_f$. Namely, put
$\mathcal{Z}_f=\{w\in\mathbb{C}\setminus T: f(w)=0\}\cup\{t_n\in
T: a_n=0\}$. Finally let $D(z,r)$ stand for the open disc
centered at $z$ of radius $r$.

Our first result shows that several natural forms of localization are equivalent for the spaces  $\mathcal{H}(T,\mu)$.

\begin{theorem}
Let $\mathcal{H}(T,\mu)$ be a space of Cauchy transforms  with a 
power separated $T$. The following statements are equivalent:
\begin{enumerate}
\begin{item}
There exists an unbounded set $S\subset \mathbb C$ such that the set
$\mathcal{Z}_f\cap S$ is finite for any nonzero
$f\in\mathcal{H}(T,\mu)$;
\end{item}
\begin{item}
The set $\mathcal{Z}_f\setminus\cup_n D(t_n,1)$ is finite for any
nonzero $f\in \mathcal{H}(T,\mu)$;
\end{item}
\begin{item}
There exists a sequence of disjoint disks $\{D(t_n,r_n)\}$ such
that for any nonzero $f\in \mathcal{H}(T,\mu)$ the set
$\mathcal{Z}_f\setminus\cup_n D(t_n,r_n)$ is finite and each disk
$D(t_n,r_n)$ contains at most one point of $\mathcal{Z}_f$ for any
$n$ except, possibly, a finite number;
\end{item}
\begin{item}
There is no nonzero $f\in\mathcal{H}(T,\mu)$ with infinite number
of multiple zeros.
\end{item}
\end{enumerate}
\label{local}
\end{theorem}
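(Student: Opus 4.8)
The plan is to pass from Cauchy transforms to entire functions via a generating function. Let $A$ be a canonical product vanishing precisely on $T$ (its existence and convergence being guaranteed by the growth bound $|t_n|\ge c|n|^\rho$ that follows from \eqref{powsep}); then $F:=Af$ is entire for every $f\in\mathcal H(T,\mu)$, the poles being cancelled by the zeros of $A$, and a direct check shows that $\mathcal Z_f$ coincides with the zero set of $F$ (with multiplicities away from $T$). Thus all four statements become statements about the zero sets of the entire functions $F$, and I may freely use the argument principle and Rouch\'e's theorem. The elementary implications come first. For $(ii)\Rightarrow(i)$ I take $S=\{z:|\im z|\ge 1\}$, which is unbounded and disjoint from every $D(t_n,1)$, so $\mathcal Z_f\cap S\subset\mathcal Z_f\setminus\bigcup_n D(t_n,1)$ is finite. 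For $(iii)\Rightarrow(i)$ I note that the intervals $(t_n-r_n,t_n+r_n)$ are disjoint, hence their complement in $\mathbb R$ contains points $x_k$ of arbitrarily large modulus; since $|x_k-t_n|\ge r_n$ for all $n$, these points lie in no $D(t_n,r_n)$ and form an unbounded $S$ with $\mathcal Z_f\cap S$ finite.

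The heart of the matter is the equivalence of the localization statements with $(iv)$, and for this the main tool is a zero--relocation lemma: if $f\in\mathcal H(T,\mu)$ and $w\in\mathcal Z_f$, then the division property gives $\frac{f(z)}{z-w}\in\mathcal H(T,\mu)$, whence for every $\zeta\in\mathbb C$
\[
g_{\zeta}(z):=f(z)\,\frac{z-\zeta}{z-w}=f(z)-(\zeta-w)\,\frac{f(z)}{z-w}\in\mathcal H(T,\mu),
\]
a function that agrees with $f$ except that the zero at $w$ has been transported to $\zeta$; the exact identity $\bignorm{\frac{f}{z-w}}^2=\sum_n\frac{|a_n|^2}{|t_n-w|^2}$ measures the cost of one such move. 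Applying this twice shows that two distinct zeros $w,w'$ of a single function can be fused into a \emph{double} zero at any prescribed point, while a double zero can conversely be split into two nearby simple ones. With these moves in hand I carry out the contrapositive of ``localization $\Rightarrow(iv)$'' by taking a function with infinitely many multiple zeros and transporting one zero of each to a point away from $T$, thereby producing a single nonzero element of $\mathcal H(T,\mu)$ with infinitely many zeros outside $\bigcup_n D(t_n,1)$; the reverse direction fuses the excess zeros supplied by non-localization into multiple zeros. In both cases the construction is infinite, and this is where the work concentrates: the resulting infinite product of elementary moves must converge in the $\ell^2$ norm. Here the power-separation hypothesis \eqref{powsep}, through the norm identity above and the lower bounds it yields on $\dist(w,T)$ for the relocated zeros, is precisely what makes the accumulated perturbation summable. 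I expect the same mechanism to drive the recovery of localization from the single avoided set in $(i)$, via the dichotomy that non-localization makes the space flexible enough for every sufficiently sparse unbounded set to be a non-uniqueness set, contradicting $(i)$.

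Finally I would upgrade qualitative localization to the quantitative form $(iii)$ by a local argument-principle count. Writing $(z-t_n)f(z)=a_n\mu_n^{1/2}+(z-t_n)g_n(z)$, where $g_n$ is the regular part of $f$ at $t_n$, I count the zeros of $F$ in disks covering the inter-pole gaps, with radii comparable to the neighbouring gaps (chosen using \eqref{powsep} so that the disks are disjoint). The count is at most one once the regular part is dominated by the residue on the relevant circles for all but finitely many $n$, which gives disjoint disks each carrying at most one point of $\mathcal Z_f$. The main obstacle is establishing this domination \emph{uniformly} in $n$ from the soft hypotheses (i)--(ii): one must rule out, for all but finitely many $n$, that the regular part of some $f$ overwhelms its residue at $t_n$, and it is here---rather than in the geometric implications---that the convergence-controlled construction resting on \eqref{powsep} is indispensable.
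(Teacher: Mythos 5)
The architecture you chose --- pass to $F=Af$ and relocate zeros by multiplying with rational factors $(z-\zeta)/(z-w)$ --- is exactly the paper's, but your convergence claim is a genuine gap. In every nontrivial implication the displacements are unbounded: in (i)$\Rightarrow$(ii) one must move zeros $z_k$ to points $s_k$ of a sparse unbounded set with $2|z_k|\le |s_k|\le |z_{k+1}|/2$, and in (iv)$\Rightarrow$(ii) one fuses lacunary zeros $z_{2n+1}$ onto $z_{2n}$. Consequently the infinite product of your elementary moves, $\prod_k (z-\zeta_k)/(z-w_k)$, does \emph{not} converge in the norm of $\mathcal{H}(T,\mu)$; it grows like a power of $|z|$ away from small disks, so the modified function leaves the space, and no amount of $\ell^2$ bookkeeping of ``accumulated perturbations'' repairs this. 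The paper's actual mechanism is: establish the pointwise bound $|\prod_k\cdots|\lesssim |z|^{M+1}$ off small disks, then \emph{divide by a polynomial} vanishing at finitely many retained zeros of $F$, and verify membership of the quotient via the de Branges criterion (Theorem \ref{t26}: bounded type, the $\liminf$ condition \eqref{FiyA} along $i\mathbb{R}$, and the sum \eqref{inc}). This polynomial division plus membership criterion is absent from your plan. A second concrete failure: your contrapositive of ``localization $\Rightarrow$ (iv)'' (``transport one zero of each multiple zero away from $T$'') breaks in precisely the case the paper must treat separately, namely multiple zeros $z_k$ superpolynomially close to points $t_{n_k}$: there the factor $(z-\zeta_k)/(z-z_k)$ has size about $|t_{n_k}-z_k|^{-1}$ at $t_{n_k}$, superpolynomially large, which destroys \eqref{inc} and cannot be compensated by any polynomial; the paper instead relocates \emph{both} zeros of the pair, one to $z_k+i$ and one exactly onto $t_{n_k}$, through the factor $(z-(z_k+i))(z-t_{n_k})/(z-z_k)^2$, so that $|\tilde F(t_n)|\lesssim |F(t_n)|$.

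Your final step toward (iii) is moreover a wrong turn, not merely a gap. Uniform domination of the regular part by the residue $a_n\mu_n^{1/2}$ on circles around $t_n$ would yield \emph{exactly} one zero of $F$ in each disk, i.e.\ the strong localization property, which by Theorem \ref{strlocal} is equivalent to polynomial density in $L^2(\mu)$ and genuinely fails for spaces with localization of type $\ge 2$ (Example \ref{simex}); so it cannot be deduced from (i)--(ii), as you half-suspect when you flag the uniformity as the obstacle. But (iii) only asserts \emph{at most} one point of $\mathcal{Z}_f$ per disk, and the paper obtains it from (ii')\,\&\,(iv) by the same fusing device: if infinitely many disks $D(t_{n_k},|t_{n_k}|^{-M})$ contained two zeros $z_k,\tilde z_k$, the function $F(z)\prod_k (z-t_{n_k})^2/\bigl((z-z_k)(z-\tilde z_k)\bigr)$, divided by a suitable polynomial, would lie in the space and have infinitely many double zeros, contradicting (iv). Finally, the implication (i)$\Rightarrow$(ii), which you leave at ``I expect the same mechanism,'' is a substantive step requiring the interlaced selection of $s_k\in S$ and $z_k\in\mathcal{Z}_F$ with $\dist(z_k,T)\ge |z_k|^{-M}$ described above; as it stands your proposal proves none of the three hard implications.
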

This theorem leads us to the following definition.

\begin{definition} We say that the space $\mathcal{H}(T,\mu)$
with a power separated sequence $T$ has the localization property if
one of the equivalent conditions of Theorem \ref{local} holds.
\end{definition}

Theorem \ref{local} shows that if the zeros of functions from
$\mathcal{H}(T,\mu)$ are localized near some nontrivial subset of
$\mathbb{C}$ than they are also localized near some subset of $T$.
In this paper we study the structure of such subsets of $T$ which will be called {\it attraction sets}, see Subsection \ref{AS}.

For some spaces the zeros are localized only near the whole set $T$.

\begin{definition}
We say that the space $\mathcal{H}(T,\mu)$ with a power separated
sequence $T$ has the strong localization property if there exists
a sequence of disjoint disks $\{D(t_n,r_n)\}_{t_n\in T}$ such that for any
nonzero $f\in \mathcal{H}(T,\mu)$ each disk $D(t_n,r_n)$ contains
exactly one point of $\mathcal{Z}_f$ for any $n$ except, possibly,
a finite number.
\end{definition}


It turns out that the strong
localization property is closely related to the approximation by
polynomials on $\mathbb{R}$.

\begin{theorem}
\label{strlocal}
The space $\mathcal{H}(T,\mu)$ has the strong localization
property if and only if the polynomials belong to
$L^2(\mu)$ and are dense there.
\end{theorem}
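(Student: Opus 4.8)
The plan is to transport everything to $L^2(\mu)$ through the Cauchy transform and to read off the zeros from the Stieltjes transform of $\mu$. Set $m(z)=\int_{\mathbb R}\frac{d\mu(t)}{z-t}=\sum_n\frac{\mu_n}{z-t_n}$ and let $A$, $B=Am$ be the entire functions of Subsection \ref{dbr}, where $A$ has simple zeros exactly on $T$ and $E=A-iB$ is the de Branges structure function. The map $\mathcal C g(z)=\int_{\mathbb R}\frac{g(t)}{z-t}\,d\mu(t)$ is an isometric isomorphism of $L^2(\mu)$ onto $\mathcal H(T,\mu)$ (with $a_n=g(t_n)\mu_n^{1/2}$), and a polynomial $p$ is sent to $f_p=\mathcal C p=pm-q_p$ with $q_p$ a polynomial of degree $\deg p-1$; hence every $p\in L^2(\mu)$ exactly when all moments of $\mu$ are finite, and the $f_p$ are dense in $\mathcal H(T,\mu)$ exactly when the polynomials are dense in $L^2(\mu)$. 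Since the zeros of $f=\mathcal Cg$ coincide, via $f\mapsto F=Af$, with the zeros of the entire function $F\in\mathcal H(E)$, strong localization becomes a statement about the zeros of $F$.

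The engine of the proof is the monotonicity $m'(x)=-\int(x-t)^{-2}\,d\mu(t)<0$: on each gap $(t_n,t_{n+1})$ the function $m$ decreases from $+\infty$ to $-\infty$, so it has exactly one zero there and a simple pole at every $t_n$. Consequently $f_p=pm-q_p$ changes sign, and hence has at least one zero, in every gap in which $p$ does not vanish, i.e. in all but finitely many gaps.

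For the direction ``polynomials dense and in $L^2(\mu)$ $\Rightarrow$ strong localization'' I would first upgrade this lower bound to an exact count for the dense family $\{f_p\}$: writing $F_p=pB-q_pA$, the factors $p,q_p$ have exponential type $0$, so $F_p$ has the same exponential type as $E$ and its zero-counting function agrees with that of $A$ up to $O(\deg p)$; matched against the ``at least one per gap'' bound this forces exactly one zero per gap outside a finite set and leaves no room for extra (in particular complex) zeros. The finiteness of all moments, i.e. the rapid decay of $\mu_n$, then shows that near each $t_n$ the single pole term $a_n\mu_n^{1/2}/(z-t_n)$ dominates, so this unique zero lies within the separation distance of one fixed $t_n$; this yields disjoint disks $D(t_n,r_n)$ with radii independent of $p$. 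Finally I would pass to an arbitrary $f\in\mathcal H(T,\mu)$ by approximating $f$ by $f_{p_k}$ and invoking Hurwitz's theorem together with Theorem \ref{local}, checking that the radii $r_n$ stay uniform in $f$.

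For the converse I argue by contraposition and use Theorem \ref{local}. If some moment of $\mu$ is infinite, then $z^k\notin L^2(\mu)$ and $\mathcal H(E)$ is too thin to sustain one zero per gap: one constructs an $f$ violating the disjoint-disk count, equivalently an $f$ with infinitely many multiple zeros, contradicting Theorem \ref{local}(iv). If all moments are finite but the polynomials are not dense, pick $0\ne g\in L^2(\mu)$ with $\int t^k g\,d\mu=0$ for all $k$; then $f=\mathcal Cg$ decays faster than every power at infinity, so $F=Af$ is forced to oscillate and acquire extra zeros on a positive-density set of gaps, again defeating the ``one zero per disk'' property. The main obstacle is precisely the quantitative, uniform-in-$f$ step in the first direction: passing from ``one sign change of $m$ per gap'' to ``exactly one zero of $F$, trapped in a prescribed disk $D(t_n,r_n)$ whose radius does not depend on $f$,'' which requires simultaneously the exponential-type identity for $F$ (to exclude stray complex zeros) and the rapid decay of $\mu_n$ (to pin the zero to a single node); dually, converting the failure of polynomial density into a genuine excess of zeros is the delicate point of the converse, and is where the determinacy / $N$-extremality description of density enters.
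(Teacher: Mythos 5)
Your reduction to $L^2(\mu)$ and the ``one sign change of $m$ per gap'' observation are fine as far as they go, but the pivotal step of your forward direction --- upgrading ``at least one zero per gap'' to ``exactly one, and no complex zeros'' by comparing the zero-counting function of $F_p=pB-q_pA$ with that of $A$ via exponential type --- has no basis in the regime this theorem lives in. When $T$ is sparse (say $t_n=2^n$, which is power separated), $A$, $E$ and $F_p$ all have \emph{zero} exponential type and zero density of zeros, so type considerations place no constraint whatsoever on the counting function; even in the classical case $T=\mathbb{Z}$, Cartwright--Levinson theory only gives agreement of counting functions up to $o(r)$, which is compatible with infinitely many exceptional gaps carrying two zeros or none, and so cannot force the strong localization conclusion. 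Moreover, your final passage from the family $\{f_p\}$ to a general $f$ by norm-approximation and Hurwitz cannot succeed even in principle: strong localization is a statement about \emph{all but finitely many} zeros, i.e.\ a tail property at infinity, and Hurwitz's theorem controls zeros only on compact sets, while the needed radii and exceptional sets are not uniform along an approximating sequence (you acknowledge this obstacle yourself, but it is precisely the content of the theorem, not a technicality). The paper avoids both problems by working with each individual $f(z)=\sum_n d_n\mu_n/(z-t_n)$ directly: density of polynomials is used only to guarantee that the \emph{given} sequence $\{d_n\}$ has a nonvanishing moment, i.e.\ there is a smallest $K$ with $\sum_n d_nt_n^K\mu_n\neq 0$; then the expansion of the Cauchy kernel gives $f(z_j)=\big(\sum_n d_n\mu_n t_n^K\big)z_j^{-K-1}+O(|z_j|^{-K-2})$ for $|z_j|\to\infty$ with $\dist(z_j,T)\ge|z_j|^{-M}$, yielding the uniform lower bound \eqref{LReq} for all $z$ (real or complex) off small disks around $T$, with no counting argument at all; exactly one zero per disk then follows by Rouch\'e, using $\mu_k=o(|t_k|^{-\tilde L})$ to compare $f$ with $f$ minus its $k$-th term.

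Your converse also has the mechanism inverted. If the polynomials are not dense, the failure under the strong localization hypothesis is not that $F=Af$ ``acquires extra zeros on a positive-density set of gaps'' (a notion that is empty for sparse $T$); it is a \emph{growth} contradiction: the vanishing of all moments of the annihilator $\{u_n\}$ forces $|F(iy)/A(iy)|=o(|y|^{-k})$ for every $k$, whereas the strong localization hypothesis itself supplies one zero $z_n$ in almost every disk $D(t_n,|t_n|^{-M})$, and the product $F_1(z)=A_1(z)\prod_{t_n\in T_1}\frac{z-z_n}{z-t_n}$ satisfies $|F_1|\asymp|A_1|$ away from $T_1$, so that $F=F_1F_2$ with $F_2$ at most a polynomial and hence $|F(iy)|/|A(iy)|\gtrsim|y|^{-M}$ --- the function is too small at infinity to have that many zeros, not too large to avoid extra ones. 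Finally, the membership of polynomials in $L^2(\mu)$ does not require a separate ad hoc construction of an $f$ ``violating the disjoint-disk count'': it follows from the localization property via the concrete lacunary-product argument of Proposition \ref{mu_n} (if $\mu_{n_k}\ge|t_{n_k}|^{-M}$ along a lacunary subsequence, then $AU^3\prod_k(1-z/t_{n_k})^{-1}$ lies in the de Branges space by Theorem \ref{t26}, contradicting Theorem \ref{local}(iv)); no determinacy or N-extremality theory is needed anywhere.
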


Note that polynomials belong to $L^2(\mu)$ whenever $\mathcal{H}(T,\mu)$  has the localization property (see Proposition \ref{mu_n}). Further we will identify the space $L^2(T, \mu)$ with the
weighted sequence space $\ell^2(\mu)$:
$$
\ell^2(\mu):=\bigl{\{}\{a_n\}:\sum_n|a_n|^2\mu_n<\infty\bigr{\}}.
$$

The question of density of polynomials in weighted $L^p$ spaces
is a famous longstanding problem in analysis. In the general
setting this problem was studied by M.~Riesz, S.~Bernstein, N.~Akhiezer, 
S.~Mergelyan, L.~de~Branges, and many
others. For an extensive discussion see, e.g., survey papers
\cite{Akh,Merg} and the monograph \cite{Koo1}. For the case of
measures supported by discrete subsets of $\mathbb{R}$ this
problem was recently investigated by 
A.~Borichev and M.~Sodin in \cite{BS0, BS1}. Further results 
about completeness of polynomials (as well as functions of exponential 
type and other classes of functions) were obtained by A.~Bakan \cite{bak},
Borichev and Sodin \cite{BS2}, A.~Poltoratski \cite{polt, polt1}, 
A.~Baranov and H.~Woracek \cite{bw}.

This gives us numerous examples of spaces with or without the
strong localization property.

\begin{example}
Let $T=\mathbb{Z}$ and let $\mu_n$ be an even sequence decreasing
on $\mathbb{N}$. Then the space $\mathcal{H}(T,\mu)$ has the
strong localization property if and only if
$$\sum_n\frac{\log\mu_n}{1+n^2}>-\infty.$$
\end{example}

As we will see, there exists a wide class of spaces with the
localization property for which the strong localization fails.
Such spaces   appear naturally  in the context of spectral theory of
canonical systems. This will be discussed in Subsection
\ref{dbr}. One of the simplest examples of such spaces is the
following:
\begin{example}
Put $\mu=\sum_{n\in\mathbb{N}}2^{-n(n-1)/2}n^2\delta_{2^n}$.
Then the space $\mathcal{H}(T,\mu)$ has the localization property
but not the strong one. \label{simex}
\end{example}

However under some smoothness condition on $\mu_n$ the
localization property implies the strong localization property,
see Theorem \ref{converse} in Subsection \ref{force}.


\subsection{Attraction sets}
\label{AS}
Let $\mathcal{H}(T,\mu)$ have the localization property. By the
property (iii) from Theorem \ref{local}, with any nonzero
function $f\in\mathcal{H}(T,\mu)$ we may associate a set
$T_f\subset T$ such that for some disjoint disks $D(t_n,r_n)$
all zeros of $f$ except, may be, a finite number
are contained in $\cup_{t_n\in T} D(t_n,r_n)$ and 
there exists exactly one point of $\mathcal{Z}_f$ in each disk
$D(t_n,r_n)$, $t_n\in T_f$, except, may be, a finite number of indices $n$.
Thus, the set $T_f$ is uniquely defined by $f$ up to finite sets.
Let us also note that we can always take $r_n=|t_n|^{-M}$ for
any $M>0$ (see condition (ii') in the beginning of Section \ref{locsec}).

\begin{definition}
Let $\mathcal{H}(T,\mu)$ have  the localization property. We will
say that $S\subset T$ is an attraction set if there exists
$f\in\mathcal{H}(T,\mu)$ such that $T_f=S$ up to a finite set.
\end{definition}

{Note that, in view of our definition of $\mathcal{Z}_f$, $f\in\mathcal{H}(T,\mu)$, for $f(z):=\frac{1}{z-t_0}$, $t_0\in T$, we have $\mathcal{Z}_f=T\setminus\{t_0\}$. So, $T$ is always an attraction set.}

It turns out that the localization property implies the following
ordering theorem for the attraction sets of $\mathcal{H}(T,\mu)$.

\begin{theorem}
Let $\mathcal{H}(T,\mu)$ be a space of  Cauchy transforms with
the localization property. Then for any two attraction sets $S_1$,
$S_2$ either $S_1\subset S_2$ or $S_2\subset S_1$ up to finite
sets. \label{ordering}
\end{theorem}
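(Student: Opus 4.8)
The plan is to argue by contradiction: assume $S_1$ and $S_2$ are \emph{not} nested up to finite sets, so that both $A:=S_1\setminus S_2$ and $B:=S_2\setminus S_1$ are infinite. Fix witnesses $f_1,f_2\in\mathcal{H}(T,\mu)$ with $T_{f_1}=S_1$ and $T_{f_2}=S_2$ up to finite sets, and consider the whole pencil $f_\lambda:=f_1-\lambda f_2$, $\lambda\in\mathbb{C}$. Every $f_\lambda$ again lies in $\mathcal{H}(T,\mu)$, hence has the localization property; the idea is to exhibit a single $\lambda_0$ for which $f_{\lambda_0}$ has infinitely many zeros lying in the gaps between the discs $D(t_n,r_n)$, contradicting condition (ii) of Theorem~\ref{local}. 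Heuristically, as $\lambda$ runs from $0$ to $\infty$ the zeros of $f_\lambda$ must migrate from the $A$-sites (where $f_1$ vanishes) to the $B$-sites (where $f_2$ vanishes), and during the migration they are forced to spend long stretches of the parameter away from $T$.

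To track the zeros I would pass to the meromorphic ratio $\phi:=f_1/f_2$, so that the zeros of $f_\lambda$ are exactly the solutions of $\phi(z)=\lambda$. Near a point $t_n$ the poles of $f_1$ and $f_2$ cancel and $\phi(t_n)=a_{1,n}/a_{2,n}=:c_n$, while $\phi$ has a zero in $D(t_n,r_n)$ precisely when $n\in A$ (a zero of $f_1$, no zero of $f_2$) and a pole there precisely when $n\in B$. Restricting to an index $n\in A$ whose successor lies in $B$, the function $\phi$ runs from $0$ to $\infty$ across the intervening real gap, so that the equation $\phi=\lambda$ has a single root $z_n(\lambda)$ travelling continuously (by a Hurwitz/normal-families argument) from near $t_n$ at small $\lambda$ across the gap towards the $B$-site as $\lambda$ grows. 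This root leaves $D(t_n,r_n)$ once $\lambda$ exceeds an exit value $\lambda_n^\ast\approx c_n$, and for every larger $\lambda_0$ it sits in the open gap, at distance $>r_k$ from every $t_k$.

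The conclusion then splits according to the size of the exit values. If infinitely many of the $c_n$ ($n\in A$ adjacent to $B$) are bounded by some $\Lambda$, choose a fixed $\lambda_0>\Lambda$: then $f_{\lambda_0}$ has infinitely many zeros outside $\cup_k D(t_k,r_k)$, contradicting condition (ii). If instead those $c_n$ are unbounded, I would run the symmetric argument after the substitution $\lambda\mapsto 1/\lambda$ and the exchange $f_1\leftrightarrow f_2$, for which the relevant exit values for $m\in B$ adjacent to $A$ are $\approx 1/c_m$ and hence small; one of the two sides must furnish a usable $\lambda_0$.

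I expect the main obstacle to be producing a single $\lambda_0$ that is simultaneously ``in a gap'' for infinitely many intervals: when an $A$-run contains interior indices $k\notin S_1\cup S_2$, the values $c_k=\phi(t_k)$ introduce additional level-crossings near $t_k$ at $\lambda\approx c_k$, and one must control the distribution of these values to avoid their covering every candidate $\lambda_0$. A clean reduction would be to take real witnesses $f_1,f_2$ so that $\phi$ is genuinely monotone between adjacent zeros and poles; for complex coefficients this should be replaced by an argument-principle count of solutions of $\phi=\lambda$ in thin rectangles straddling the gaps, uniformly in $\lambda$ on compacts. The degenerate case $a_{2,n}=0$ (where $c_n$ is infinite) is handled by symmetry as above. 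I would emphasize that the weaker route through condition (iv) alone does \emph{not} suffice, since a strictly alternating pattern of $A$- and $B$-sites produces no forced critical points of $\phi$, and hence no combination with infinitely many multiple zeros; it is the violation of (ii) by a travelling zero that drives the contradiction.
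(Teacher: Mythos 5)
Your strategy (the pencil $f_\lambda=f_1-\lambda f_2$ and the level sets of $\phi=f_1/f_2$) is natural, but the step you yourself flag is a genuine, unclosed gap, and the dichotomy you propose does not close it. To contradict condition (ii) of Theorem \ref{local} you must produce \emph{one} $\lambda_0$ lying in $\phi(G_n)$ for infinitely many gaps $G_n$: for each fixed $\lambda$ condition (ii) tolerates finitely many stray zeros, so a single travelling root per gap proves nothing, and the quantifiers ``for each $n$ some $\lambda$'' and ``some $\lambda$ for infinitely many $n$'' must be exchanged. Boundedness of the exit values $c_n$ by $\Lambda$ does not let you take an arbitrary $\lambda_0>\Lambda$: the root of $\phi=\lambda_0$ lies in the \emph{open} gap only if $\lambda_0$ lies between the values of $\phi$ at the two gap edges, and the far-edge value (just outside the next disk $D(t_m,r_m)$) is completely uncontrolled, so for your $\lambda_0$ the root may already sit inside $D(t_m,r_m)$. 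Nothing rules out that the transit images escape, say $\phi(G_n)\subset\{R_n<|\lambda|<R_n'\}$ with $R_n\to\infty$, which is perfectly consistent with every $f_\lambda$ having localized zeros and defeats any pigeonhole; the substitution $\lambda\mapsto 1/\lambda$ merely swaps the roles of the two ends and suffers the same two-sided loss of control. For complex coefficients the situation is worse still: $\phi(G_n)$ is a curve, a fixed $\lambda_0$ avoids infinitely many curves with no difficulty, and the argument-principle repair you sketch needs the winding of $\phi-\lambda_0$ around thin rectangles to be positive for infinitely many $n$, which is precisely the unproved point restated.

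Note also that your clean reduction to real witnesses is legitimate — this is Lemma \ref{TfLemma} of the paper, which replaces $f_1,f_2$ by real functions $A_1,\tilde A_1$ vanishing exactly on $T_{f_1},T_{f_2}\subset T$ — but it degenerates your dichotomy: writing $A_1=BA_0$, $\tilde A_1=\tilde B A_0$ with $\mathcal{Z}_B$, $\mathcal{Z}_{\tilde B}$ disjoint, one gets $\phi=B/\tilde B$, so $c_n=\phi(t_n)$ equals $0$ on $S_1\setminus S_2$ and $\infty$ on $S_2\setminus S_1$, and the meaningful quantity becomes the slope ratio $|B'(t_n)|/|\tilde B(t_n)|$ at the zeros of $\phi$ (and its reciprocal at the poles). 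The entire difficulty is to show that these ratios cannot grow superpolynomially along \emph{both} $\mathcal{Z}_B$ and $\mathcal{Z}_{\tilde B}$ simultaneously; this is exactly Proposition \ref{ordlemma} of the paper, and its proof is not elementary: it passes through the interpolation remainders $F_1=\tilde B/B-\sum_n \tilde B(t_n)/\bigl(B'(t_n)(z-t_n)\bigr)$ and the symmetric $F_2$, the identity \eqref{FF}, a maximum-principle bound giving $\min(|F_1(z)|,|F_2(z)|)\lesssim 1$ on all of $\CC$, and then de Branges' deep Lemma 8 forcing $F_1,F_2$ to be constants, with a final twist using $z\tilde B(z)$; the paper's other proof instead builds the de Branges subspaces of Lemma \ref{subconstr} and invokes the de Branges Ordering Theorem. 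Your proposal contains no substitute for this global rigidity input, and until the common-$\lambda_0$ step is supplied by something of that strength, the migration argument does not go through.
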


This ordering rule has some analogy with the de Branges Ordering
Theorem for the chains of de Branges subspaces. It is natural to
classify the spaces $\mathcal{H}(T,\mu)$ with respect to the
number of attraction sets. By $\#E$ we will denote the number of elements
in the set $E$.

\begin{definition} We say that the space $\mathcal{H}(T,\mu)$ has
the localization property of type $N$ if there exist  $N$
subsets $W_1$, $W_2$,...,$W_N$ of $T$ such that $W_j\subset W_{j+1}$, $1\leq j\leq N-1$,
$\#(W_{j+1}\setminus W_j)=\infty$ and for any nonzero
$f\in\mathcal{H}(T,\mu)$ we have $T_f=W_j$ for some $j$, $1\le j\le N$, 
up to  finite sets, moreover, $N$ is the smallest integer with this property.
\end{definition}
Clearly, $W_N=T$ up to 
a finite set. The strong localization is the localization of type $1$.

{
The notion of a Hamburger class function will be of  importance in what follows.
Following \cite{BS1} we say that an entire function $B$ of zero exponential type
(which is not a polynomial) 
belongs to the {\it Hamburger class} 
if it is real on $\RR$, has only real and simple zeros 
$\{s_k\}$, and for any $M>0$, $|s_k|^M = o(|B'(s_k)|)$, $s_k \to \infty$. }

As we will see (see Lemma \ref{A2M}), in the case of the localization of type $N$
the set $T\setminus W_j$ is   small in a sense for each $j$;
namely, it is the zero set of a Hamburger class function.
The next theorem provides a complete description of the spaces
with the  localization property of type $2$.

\begin{theorem}
 \label{type2}
The space $\mathcal{H}(T,\mu)$ has the localization property of
type $2$ if and only if there exists a partition $T=T_1\cup T_2$,
$T_1\cap T_2=\emptyset$, such that the following three conditions
hold:
\begin{enumerate}
\begin{item}
There exists a Hamburger class function $A_2$ such that
$\mathcal{Z}_{A_2}=T_2$;
\end{item}
\begin{item}
The polynomials belong to the space $L^2(T_2,{\mu }|_{T_2})$, 
are not dense there, but their 
closure is of finite codimension in $L^2(T_2,{\mu }|_{T_2})$.
\end{item}
\begin{item}
The polynomials belong to the space $L^2(T_1,\tilde{\mu})$ and are dense there,
where $\tilde{\mu}=\sum_{t_n\in T_1}\mu_n|A_2'(t_n)|^2\delta_{t_n}$.
\end{item}
\end{enumerate}
{Moreover, $T_1$ and $T$ are the attraction sets for $\mathcal{H}(T,\mu)$.}
\end{theorem}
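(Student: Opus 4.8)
The plan is to translate the localization type into the distribution of zeros of the entire functions $g=Af$, where $A$ is a fixed real entire function with simple zeros exactly on $T$ and $f\in\mathcal{H}(T,\mu)$; with the convention fixed for $\mathcal{Z}_f$ one checks that $\mathcal{Z}_f=\mathcal{Z}_g$, so the attraction set $T_f$ records near which points of $T$ the zeros of $g$ accumulate. Assuming type $2$ localization, the Ordering Theorem \ref{ordering} forces the two attraction sets to be comparable, so up to finite sets they are $W_1\subsetneq W_2=T$; I set $T_1:=W_1$ and $T_2:=T\setminus T_1$, which already gives the ``moreover'' assertion. Applying Lemma \ref{A2M} with $j=1$ yields a Hamburger class function $A_2$ with $\mathcal{Z}_{A_2}=T_2$, i.e.\ condition (i), and I factor $A=A_1A_2$ so that the zeros of $A_1$ are exactly $T_1$.

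For condition (iii) I would realize the attraction set $T_1$ and reduce the behaviour of $\mathcal{H}(T,\mu)$ along $T_1$ to the \emph{strong} localization of an auxiliary Cauchy transform space $\mathcal{H}(T_1,\tilde\mu)$. For an $f$ with $T_f=T_1$, the entire function $g=Af$ has all but finitely many of its zeros localized on $T_1$ and stays away from $T_2$; peeling off the factor $A_2$ (whose zero set is exactly $T_2$) transfers this localization to the $T_1$-part, and a computation of how the $\ell^2$-norm transforms under this passage produces the weight $\tilde\mu_n=\mu_n|A_2'(t_n)|^2$. Under the resulting correspondence, the existence of such $f$ amounts to type $1$ localization of $\mathcal{H}(T_1,\tilde\mu)$, and Theorem \ref{strlocal} turns this into the density of polynomials in $L^2(T_1,\tilde\mu)$; their membership in that space follows as in Proposition \ref{mu_n}.

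Condition (ii) encodes the obstruction living on $T_2$. Because $T$ is an attraction set but, by type $2$, nothing lies strictly between $T_1$ and $T$, the polynomials can be neither dense nor of infinite codimension in $L^2(T_2,\mu|_{T_2})$: density would let $T_2$ localize strongly and split off an intermediate attraction set via Theorem \ref{strlocal}, while infinite codimension would furnish infinitely many independent annihilating functionals and, through Lemma \ref{A2M}, a whole chain of intermediate attraction sets. Hence their closure has finite positive codimension, which is (ii). For the converse I would reverse these steps: conditions (i) and (iii) together with Theorem \ref{strlocal} yield functions whose attraction set is exactly $T_1$, the elementary Cauchy kernels $1/(z-t_0)$ realize the attraction set $T$, and condition (ii) ensures that no attraction set can appear between them, so that $T_1$ and $T$ are the only two and the localization is precisely of type $2$.

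The step I expect to be hardest is making the correspondence ``attraction set $\leftrightarrow$ Hamburger gap $\leftrightarrow$ annihilating functional on $L^2(T_2,\mu|_{T_2})$'' quantitatively precise: one must match the \emph{finite} codimension in (ii) with the absence of attraction sets strictly between $T_1$ and $T$, and must verify that peeling off $A_2$ is, up to a finite-dimensional discrepancy, the isometry responsible for the weight $\mu_n|A_2'(t_n)|^2$ in (iii). This norm bookkeeping, rather than any single inequality, is where the bulk of the technical effort will go.
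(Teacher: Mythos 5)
Your skeleton matches the paper's (Theorem \ref{ordering} gives the nested attraction sets $T_1\subsetneq T$, Lemma \ref{TfLemma} produces $A_1\in\mathcal{H}(E)$ vanishing on $T_1$, and Lemma \ref{A2M} then gives condition (i); Theorem \ref{strlocal} is indeed the bridge to (iii)), but two of your proposed mechanisms would fail. First, for the finite codimension in (ii): infinitely many annihilating functionals on $L^2(T_2,\mu|_{T_2})$ do \emph{not} furnish intermediate attraction sets. A sequence annihilating all moments corresponds (via superpolynomial decay of the Cauchy transform along $i\mathbb{R}$, zero exponential type, and Hadamard) to a polynomial $P$ with $PA_1\in\mathcal{H}(E)$, and every such function has attraction set $T_1$; nothing strictly between $T_1$ and $T$ ever appears, so your contradiction evaporates. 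The paper's actual argument is different and essential: if $PA_1\in\mathcal{H}(E)$ for \emph{all} polynomials $P$, one builds an increasing chain of de Branges subspaces $\mathcal{H}_k=\overline{\mathrm{Span}}\bigl\{P_k(z)A_1(z)/(z-t_n)\bigr\}$ with $\dim(\mathcal{H}_{k+1}\ominus\mathcal{H}_k)=1$, and the closure of their union is a de Branges subspace containing no de Branges subspace of codimension one, contradicting Theorem \ref{chain}. Second, for the necessity of (iii) you claim that the existence of a single $f$ with $T_f=T_1$ ``amounts to'' strong localization of $\mathcal{H}(T_1,\tilde\mu)$; it does not, since strong localization quantifies over \emph{every} nonzero element of the auxiliary space. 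The paper argues contrapositively: a $g\in\mathcal{H}(T_1,\tilde\mu)$ whose zeros miss infinitely many disks around $T_1$ is lifted to $f=g/A_2$, and the technical crux --- which your ``norm bookkeeping'' does not reach --- is showing that the entire correction $H(z)=A_2(z)\sum_{t_n\in T_1}c_n\mu_n^{1/2}/(z-t_n)-g(z)$ lies in $\mathcal{H}(E_2)=A_2\mathcal{H}(T_2,\mu|_{T_2})$ (checked via Theorem \ref{t26}), so that $f$ genuinely belongs to $\mathcal{H}(T,\mu)$ while its attraction set is neither $T_1$ nor $T$.

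The sufficiency direction, which you dispatch in one sentence (``reverse these steps''), is actually the bulk of the paper's proof, and your sketch omits its main point: one must show that \emph{every} nonzero $f\in\mathcal{H}(T,\mu)$ localizes with $T_f\in\{T_1,T\}$, not merely exhibit two realized attraction sets. The paper does this by decomposing $A_2f=g+H+F$ with $g$ the $T_1$-part weighted by $A_2(t_n)$ and $H+F\in\mathcal{H}(E_2)$, identifying the exceptional subspace $\mathcal{G}\subset\mathcal{H}(E_2)$ coming from the annihilator $\mathcal{L}$, normalizing it to consist of polynomials of bounded degree by replacing $A$ with $A/S$, and then running Rouch\'e arguments: zeros of $f$ always fill out $T_1$, and fill out $T_2$ as well unless $H+F$ is a polynomial. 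Relatedly, your claim that peeling off $A_2$ is an isometry ``up to a finite-dimensional discrepancy'' is wrong as stated: the discrepancy $H+F$ ranges over the infinite-dimensional space $\mathcal{H}(E_2)$; only its polynomial part $\mathcal{G}$ is finite-dimensional, and that finiteness is exactly what hypothesis (ii) buys --- with infinite codimension the exceptional subspace would contain non-polynomial functions with $|F(iy)/A_2(iy)|$ decaying faster than any power and with uncontrolled zeros, and the localization property itself (not just the count of attraction sets) could fail.
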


These conditions mean that the measure $\mu$ consists of two 
essentially different parts $\mu = \mu|_{T_1}+ \mu |_{T_2}$.
The set $T_2$ is always rather sparse (as the  zero set of a Hamburger class function)
and the measure $\mu |_{T_2}$ is near the border of the polynomial density.
One may often (though not always) take $\mu_n = |t_n|^K |A_2'(t_n)|^{-2}$ 
with some $K>0$. On the other hand, the measure $\mu |_{T_1}$ is 
much smaller since the polynomials are dense 
not only in $L^2(T_1, \mu|_{T_1})$, but also in the space
$L^2(T_1,|A_2'|^2\mu |_{T_1})$, where $|A_2'(t_n)|$ tends to infinity 
faster then any polynomial, $|t_n|\to\infty$. The space from
Example \ref{simex} has the localization property of 
type $2$ and corresponds to the trivial partition $T=T_1 \cup T_2$,  
$T_1=\emptyset$, $T_2= T$.

We are able to give an analogous description for the spaces $\mathcal{H}(T,\mu)$ having the localization property
of type $N$ for any $N>2$ (see Theorem \ref{Nloc}).

A criterion of the polynomial density for the discrete measures 
supported by the zero set of a Hamburger class function was found by A. Borichev and M. Sodin in
\cite{BS1}. It is related to the description of the canonical 
measures for an indeterminate moment problem and, in particular,
with a curious mistake of Hamburger which remained unnoticed for 
about fifty years. We discuss these subjects in Subsection \ref{conc}. Note that
using the Borichev--Sodin criterion one can give a certain description
of the measures satisfying (ii) of Theorem \ref{type2}.

 
\subsection{De Branges spaces\label{dbr}}
An entire function $E$ is said to be in the Hermite--Biehler class
if $E$ has no real zeros and $|E(z)| >|E^*(z)|$,  
$z\in {\mathbb{C}_+}$, where $E^* (z) = \overline {E
(\overline z)}$. With any such function we associate the {\it de
Branges space} $\mathcal{H} (E) $ which consists of all entire
functions $F$ such that $F/E$ and $F^*/E$ restricted to
$\mathbb{C_+}$ belong to the Hardy space $H^2=H^2(\mathbb{C_+})$.
The inner product in $\he$ is given by
$$
(F,G)_{\he} = \int_\rl \frac{F(t)\overline{G(t)}}{|E(t)|^2} \,dt.
$$
There exist equivalent definitions of de Branges spaces. One of 
them is axiomatic (see \cite[Theorem 23]{br}). 
One more definition is related to the spaces of Cauchy transforms.
Let
$\mu$ be a positive measure on $\mathbb{R}$ as in the definition of
$\mathcal{H}(T,\mu)$ (i.e., $\mu=\sum_n\mu_n\delta_{t_n}$,
$|t_n|\rightarrow\infty$,
$\int_\mathbb{R}\frac{d\mu(t)}{1+t^2}<\infty$) and let $A$ be a
Weierstrass canonical product with zero set $T$. Then the space
$A\mathcal{H}(T,\mu)$  with the norm inherited from
$\mathcal{H}(T,\mu)$ is a de Branges space and, vice versa, any de
Branges space $\he$ can be represented in this way. The measure
$\mu$ is called {\it the spectral measure} for 
$\he = A\mathcal{H}(T,\mu)$.

The de Branges spaces play an important role in both complex
analysis and mathematical physics. They are  the crucial tool
in de Branges' celebrated solution of the inverse spectral problem
for canonical systems of differential equations (in particular,
for Schr\"{o}dinger equations on an interval).

May be the most spectacular fact in the de Branges theory is the
ordering theorem \cite[Theorem 35]{br} for the de Branges subspaces  
(those subspaces of $\he$ which are themselves de Branges spaces
with respect to the inherited norm). This theorem, in particular,
states that if $\mathcal{H}_1$ and $\mathcal{H}_2$ are two de
Branges subspaces of a given space $\he$, then either
$\mathcal{H}_1\subset\mathcal{H}_2$ or
$\mathcal{H}_2\subset\mathcal{H}_1$. However, given a de Branges space, 
there is no explicit way to reconstruct its chain. The possibility 
of the reconstruction of the properties 
of the subspaces from the properties of the final space in the 
chain is one of the deepest questions in the de Branges theory. 

We will say that $\he$ has the localization property if it satisfies either 
condition (i) or condition (iv) of Theorem \ref {local}. 
This is equivalent to say that the corresponding 
space  $\mathcal{H}(T,\mu)$ has the localization property.

It may happen that $\he$ contains a de Branges subspace of
codimension $1$. This condition is natural from the point of view   of the operator
theory and can be reformulated in many different ways, e.g., it is
equivalent to the  finiteness of {\it spectral measure} $\mu$, that is,
$\mu(\mathbb{R})<\infty$. If $\he$  has the localization property, then $\he$
has the finite spectral measure. Moreover, we have

\begin{theorem}
The de Branges space $\he$ \textup(or, equivalently $\mathcal{H}(T,\mu)$\textup)
with power separated $T=\supp\mu$ has the localization property if
and only if any de Branges subspace $\mathcal{H}_1$ of $\he$ which is not one-dimensional has a
de Branges subspace of codimension $1$. \label{chain}
\end{theorem}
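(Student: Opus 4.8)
The plan is to reformulate the statement using the two facts recorded just before the theorem: a de Branges space contains a de Branges subspace of codimension $1$ if and only if its spectral measure is finite, and a space with the localization property has finite spectral measure. Thus it suffices to prove that $\he$ has the localization property if and only if every de Branges subspace $\mathcal{H}_1\subseteq\he$ with $\dim\mathcal{H}_1>1$ has finite spectral measure. Throughout I would use that, by the de Branges ordering theorem, all de Branges subspaces of $\he$ form a chain with respect to inclusion, so that ``codimension $1$'' for a non-one-dimensional $\mathcal{H}_1$ literally means that $\mathcal{H}_1$ has an immediate predecessor in this chain.

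For the direct implication, assume $\he$ has the localization property and let $\mathcal{H}_1\subseteq\he$ be a de Branges subspace with $\dim\mathcal{H}_1>1$. First I would check that $\mathcal{H}_1$ again has the localization property: since $\mathcal{H}_1\subset\he$ as a set of entire functions and $\he$ satisfies condition (iv) of Theorem \ref{local} (no nonzero element has infinitely many multiple zeros), the same holds a fortiori for the smaller family $\mathcal{H}_1$. Writing $\mathcal{H}_1=A_1\mathcal{H}(T_1,\mu_1)$, one must also know that $T_1$ is power separated; for this I would argue that $T_1\subseteq T$ up to a finite set. Indeed, applying condition (iii) of Theorem \ref{local} for $\he$ to functions $f\in\mathcal{H}_1$, the zero set $\mathcal{Z}_f$ lies, off a finite set, in $\bigcup_n D(t_n,r_n)$, so the attraction sets $T_f$, and hence $T_1$, are contained in $T$ up to finite sets; power separation of $T$ then passes to the subset $T_1$ since the gaps only increase. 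Now $\mathcal{H}_1$ has the localization property with power separated $T_1$, so by the quoted implication it has finite spectral measure, and therefore a de Branges subspace of codimension $1$.

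For the converse I would argue by contraposition: assuming $\he$ does not have the localization property, I would produce a non-one-dimensional de Branges subspace with infinite spectral measure. By the negation of condition (iv) of Theorem \ref{local} there is a nonzero $f\in\he$ with infinitely many multiple zeros. The key point, and the main obstacle, is to convert this lack of rigidity of the zeros into a subspace of the chain whose spectral measure is infinite. Here I would exploit the meaning of a codimension-$1$ reduction: passing from a subspace $\mathcal{B}$ to a codimension-$1$ de Branges subspace $\mathcal{B}'$ removes a single degree of freedom, which I expect to correspond to controlling exactly one zero of the elements of $\mathcal{B}$ near one point of $T$. Consequently, if \emph{every} non-one-dimensional subspace in the chain admitted a codimension-$1$ subspace, one could descend the chain $\he=\mathcal{B}_0\supsetneq\mathcal{B}_1\supsetneq\cdots$ by codimension-$1$ steps and organize these reductions along the disjoint disks $D(t_n,|t_n|^{-M})$ of condition (iii), forcing all but finitely many zeros of every element to be simple and localized near $T$; in particular no element could have infinitely many multiple zeros. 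Hence the failure of (iv) must be witnessed, somewhere along the chain, by a subspace admitting no codimension-$1$ reduction, that is, one with infinite spectral measure.

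The delicate part of the converse is precisely this last mechanism: turning the heuristic ``one zero per codimension-$1$ step'' into an honest proof that a chain in which every non-one-dimensional subspace has finite spectral measure yields condition (iii) of Theorem \ref{local}. I would attack it by tracking, for the generator $g_n$ of each quotient $\mathcal{B}_{n-1}\ominus\mathcal{B}_n$, the location of the zero it controls, and by using the power separation of $T$ together with the ordering of the attraction sets (Theorem \ref{ordering}) to confine these zeros to the pairwise disjoint shrinking disks $D(t_n,|t_n|^{-M})$. Verifying that the descent exhausts all but finitely many zeros of an arbitrary element of $\he$ — equivalently, that no ``continuous'', non-pinned portion of the chain survives — is where the real work lies, and is exactly the place where an intermediate subspace of infinite spectral measure would otherwise have to intervene.
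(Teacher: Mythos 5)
Your proposal correctly identifies the paper's reduction (codimension-$1$ subspace $\Leftrightarrow$ finite spectral measure, so the theorem becomes: localization $\Leftrightarrow$ all de Branges subspaces have finite spectral measure), but both directions have genuine gaps. In the forward direction your argument is circular: after correctly noting that condition (iv) of Theorem \ref{local} passes to any subspace $\mathcal{H}_1$, and that the support $T_1$ of its spectral measure is power separated (a small correction: $T_1$ need not be contained in $T$ up to a finite set --- its points merely lie in the shrinking disks $D(t_n,r_n)$ around distinct points of $T$, which still gives power separation; the paper gets this instead from the phase estimate $(\arg\tilde E)(b)-(\arg\tilde E)(a)\le\pi$ of \cite[Problem 93]{br}), you discharge the remaining step by ``the quoted implication'' that localization implies finite spectral measure. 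But that statement, asserted without proof in the introduction, is precisely the substantive content of this half of the theorem; the paper proves it in Section 4 by an explicit construction: if $\nu(\mathbb{R})=\infty$, power separation gives a lacunary subsequence $\{s_{n_k}\}$ with $\nu_{s_{n_k}}|s_{n_k}|^M\gtrsim 1$, and then the function $A_{E_1}\, U^2/V$, where $U$ and $V$ are lacunary products over $\{s_{n_{10k}}+i\}$ and $\{s_{n_k}\}$, is shown via Theorem \ref{t26} to belong to $\he$ --- contradicting (iv) because of its infinitely many double zeros. Without some such construction, nothing has been proved.

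In the converse you yourself concede that the ``one zero per codimension-$1$ step'' mechanism is only a heuristic, and it is not repairable as stated: attraction sets and Theorem \ref{ordering} are only available under the localization property, which is exactly what fails in your contrapositive branch (and the paper's first proof of Theorem \ref{ordering} invokes Theorem \ref{chain}, so leaning on it here would also risk circularity). The paper's actual mechanism is quite different and is the missing idea: if localization fails, Remark \ref{Uremark} produces a nonzero $F\in\he$ and a single lacunary product $U$ such that $F/V\in\he$ for \emph{every} divisor $V$ of $U$. The codimension-$1$ hypothesis forces the chain of de Branges subspaces $\{\mathcal{H}(E_x)\}_{x\in\mathcal{N}}$ to be at most countable, so $U$ can be factored as $U=\prod_{l\in\mathcal{N}}U_l$ along the chain; a descent argument on $x=\inf L$ (using Proposition \ref{Udiv}, that division by a lacunary product respects subspaces, and the identity $\mathcal{H}(E_x)=\bigcap_{y>x,\,y\in\mathcal{N}}\mathcal{H}(E_y)$) then shows that both $F/U$ and $F/((z-z_0)U)$ lie in $\bigcap_{x>0}\mathcal{H}(E_x)$, a space of dimension at most one --- a contradiction. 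It is this bookkeeping of one lacunary divisor distributed over a countable chain, not a disk-by-disk tracking of individual zeros, that closes the converse; your sketch does not supply a substitute for it.
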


Let $\he$ have the localization property and let
$\{\mathcal{H}_j\}_{j=0}^\infty$ be a decreasing sequence of de Branges
subspaces starting with $\mathcal{H}_0:=\he$ such that $\dim
(\mathcal{H}_j\ominus\mathcal{H}_{j+1})=1$, $j\geq0$. The strong
localization property (i.e., the localization property of type $1$) corresponds to the situation when there is
no other de Branges subspaces, $\cap_{j\geq0}\mathcal{H}_j=\{0\}$.

\begin{theorem}
The de Branges space $\he$ \textup(or, equivalently $\mathcal{H}(T,\mu)$\textup)
with power separated $T$ has the strong localization property if
and only if any de Branges subspace of $\he$ is of finite codimension. \label{strongchain}
\end{theorem}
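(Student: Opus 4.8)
The plan is to run both implications through the descending chain of de Branges subspaces $\he=\mathcal{H}_0\supset\mathcal{H}_1\supset\mathcal{H}_2\supset\cdots$ with $\dim(\mathcal{H}_j\ominus\mathcal{H}_{j+1})=1$, combined with the de Branges ordering theorem. Note first that strong localization is, by the remark following the definition of type $N$, exactly the localization property of type $1$; in particular it implies the localization property, so in the direct implication Theorem \ref{chain} supplies the chain $\{\mathcal{H}_j\}_{j\ge0}$. In the converse I will have to produce the localization property from the codimension hypothesis before the chain becomes available, and this is where the real work lies.

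The backbone of the argument is the following dichotomy, which I would prove first: every nonzero de Branges subspace $\mathcal{K}$ of $\he$ either coincides with one of the $\mathcal{H}_j$ (hence has finite codimension) or satisfies $\mathcal{K}\subset\bigcap_{j\ge0}\mathcal{H}_j$. By the de Branges ordering theorem $\mathcal{K}$ is comparable with every $\mathcal{H}_j$, and since the $\mathcal{H}_j$ decrease, the set $\{j:\mathcal{H}_j\subset\mathcal{K}\}$ is a final segment of $\mathbb{N}$. If it is empty, then comparability forces $\mathcal{K}\subset\mathcal{H}_j$ for all $j$, giving the second alternative. Otherwise let $j_0$ be its least element: if $j_0=0$ then $\mathcal{K}=\he=\mathcal{H}_0$, while if $j_0\ge1$ then $\mathcal{H}_{j_0-1}\not\subset\mathcal{K}$, so $\mathcal{K}\subset\mathcal{H}_{j_0-1}$, whence $\mathcal{H}_{j_0}\subset\mathcal{K}\subset\mathcal{H}_{j_0-1}$; as $\dim(\mathcal{H}_{j_0-1}\ominus\mathcal{H}_{j_0})=1$ no de Branges subspace lies strictly between them, forcing $\mathcal{K}=\mathcal{H}_{j_0}$. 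This establishes the dichotomy.

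With this in hand the direct implication is immediate: if $\he$ has the strong localization property, i.e. localization of type $1$, then as recalled just before the statement this is exactly the situation $\bigcap_{j\ge0}\mathcal{H}_j=\{0\}$. A nonzero de Branges subspace cannot be contained in $\{0\}$, so by the dichotomy it equals some $\mathcal{H}_j$ and has finite codimension. For the converse, assume every de Branges subspace of $\he$ has finite codimension and suppose for the moment that the localization property is already known. Then the chain exists, $\bigcap_{j\ge0}\mathcal{H}_j$ is either $\{0\}$ or a de Branges subspace (the totally ordered family of de Branges subspaces being closed under intersection), and the second possibility is excluded since such a subspace, sitting inside every $\mathcal{H}_j$, would have infinite codimension, contradicting the hypothesis. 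Hence $\bigcap_{j\ge0}\mathcal{H}_j=\{0\}$, which is precisely localization of type $1$, i.e. the strong localization property.

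The hard part, and the step I expect to be the main obstacle, is to verify that the codimension hypothesis already forces the localization property. By Theorem \ref{chain} it suffices to show that every non-one-dimensional de Branges subspace $\mathcal{G}\subset\he$ has a de Branges subspace of codimension $1$, equivalently (as noted in Subsection \ref{dbr}) that the spectral measure of $\mathcal{G}$ is finite. I would argue by contraposition: if some $\mathcal{G}$ had infinite spectral measure, its own descending chain of de Branges subspaces would fail to terminate and would yield, through the intersection of that chain, a de Branges subspace of $\mathcal{G}$ — and hence of $\he$ — of infinite codimension, contradicting the hypothesis. Turning the infiniteness of $\mu(\mathbb{R})$ for a subspace into an explicit infinite-codimension de Branges subspace of the ambient space, via the structure of the Hamiltonian and its indivisible intervals, is the technically delicate point, and is where the de Branges machinery has to be brought in fully.
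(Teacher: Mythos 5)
Your lattice-theoretic skeleton is fine: the dichotomy that any nonzero de Branges subspace either equals some member $\mathcal{H}_j$ of the codimension-one chain or is contained in $\bigcap_{j\ge0}\mathcal{H}_j$ follows correctly from the ordering theorem and the dimension count between consecutive $\mathcal{H}_j$, and it matches the picture the paper sketches in Subsection \ref{dbr}. The genuine gap is that both of your implications are then closed by quoting the sentence of Subsection \ref{dbr} that strong localization ``corresponds to'' $\bigcap_{j\ge0}\mathcal{H}_j=\{0\}$. That sentence is an unproved gloss on Theorem \ref{strongchain} itself: given your dichotomy, the two assertions are formally equivalent, so invoking it makes the argument circular. (Note the paper prints no standalone proof of Theorem \ref{strongchain}; the intended derivation goes through Theorem \ref{strlocal}.) The missing analytic content is the bridge to polynomial density: one must identify the chain concretely, namely that when $\mu(\RR)<\infty$ the \emph{unique} codimension-one subspace of $\he=A\mathcal{H}(T,\mu)$ consists of the functions $F=A\sum_n a_n\mu_n^{1/2}/(z-t_n)$ with $\sum_n a_n\mu_n^{1/2}=0$ (cf.\ Theorem 29 of de Branges' book, used in Proposition \ref{fDiv}), so that inductively $\mathcal{H}_j$ is cut out by the vanishing of the moments $\sum_n a_n t_n^k\mu_n^{1/2}$ for $0\le k<j$ (these are finite by Proposition \ref{mu_n}). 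Then a nonzero element of $\bigcap_j\mathcal{H}_j$ is exactly a nontrivial annihilator of $\mathcal{P}$ in $L^2(\mu)$, as in Subsection \ref{pd2}, and Theorem \ref{strlocal} converts this into the failure of strong localization, and conversely. Without this identification neither direction of your proof actually closes.

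The second gap is in the converse, which you yourself flag as the main obstacle, but the contraposition you sketch would fail as stated: if a subspace $\mathcal{G}$ has infinite spectral measure then, by the equivalence recalled at the start of the paper's Section on Theorem \ref{chain}, $\mathcal{G}$ has \emph{no} de Branges subspace of codimension one at all, so there is no descending codimension-one chain of $\mathcal{G}$ whose non-termination or intersection you could exploit. To repair it you would need either the fact that a gap in the chain with no intermediate subspaces has codimension exactly one (indivisible intervals), so that every proper de Branges subspace of such a $\mathcal{G}$ --- whose existence must also be argued --- has infinite codimension in $\he$; or, more in the spirit of the paper and bypassing Theorem \ref{chain} here, the annihilator construction: a nontrivial $u\perp\mathcal{P}$ in $L^2(\mu)$ produces the subspace $\bigl\{A\sum_n a_n\mu_n^{1/2}/(z-t_n): \{a_n\mu_n^{-1/2}\}\perp\mathcal{P} \text{ in } L^2(\mu)\bigr\}$, which one checks satisfies the de Branges axioms (as in the verification for $\mathcal{G}$ in Subsection \ref{2suf}) and whose codimension equals the dimension of the closure of $\mathcal{P}$ in $L^2(\mu)$, which is infinite --- contradicting the hypothesis directly. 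As it stands, your proposal establishes only the soft ordering part and leaves the analytic core, carried in the paper by Theorem \ref{strlocal} together with the moment description of the chain, either unproved or assumed.
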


The localization property of type $2$ corresponds to the situation when the   subspace
$\mathcal G = \cap_{j\ge 0} \mathcal{H}_j$ is non-zero (hence, it is a de Branges subspace) and has the localization property of type $1$. 
Such spaces have two attraction sets: 
$T$ and $T_{\tilde{A}}$, where $\tilde{A}$ is an entire function which vanishes on the support of the spectral measure 
of the space $\mathcal G = \tilde A\mathcal{H}(\tilde T,\tilde\mu)$.

Further, the localization property of type $N$ corresponds to the situation when the   subspace
$\cap_{j\ge 0} \mathcal{H}_j$ is non-zero   and has the localization property of type $N-1$.



\subsection{Canonical systems}
Important examples of de\,Branges spaces occur in the theory of canonical
(or Hamiltonian) systems of differential equations, see, e.g.,
\cite[Theorems 37,  38]{br}, \cite{gk}, \cite{hdw}. Let $H(t)$ be a
$2\times 2$-matrix valued function defined for $t\in[0,L]$, such
that $H(t)$ is real and nonnegative with $tr H\equiv1$, the
entries of $H(t)$ belong to  $L^1([0,L])$. We call an open interval
$I\subseteq[0,L]$ {\it $H$-indivisible}, if the restriction of $H$ on
$I$ is a constant degenerate matrix and this fails for any open
interval $J\supsetneq I$ (i.e., $H(t)$ is the projector on a fixed
vector $e$ for all $t\in I$).

{  With each {\it Hamiltonian} $H$ we associate the 
so-called {\it canonical system of differential equations}
\begin{equation}
\label{can}
        Y'(t)J=zY(t)H(t),\qquad
        t\in[0,L], \qquad J:=
        \begin{pmatrix}
            0 & -1\\
            1 & 0
        \end{pmatrix},
\end{equation}
where $Y(t)$ is a $2\times 2$-matrix valued function on 
$[0,L]$ and $z\in\mathbb{C}$ is the spectral parameter. 
Let $Y(t,z)$ denote the (unique) solution of the initial value problem
\eqref{can} with $Y(0,z)=Id$.}

A wide class of second order differential equations (e.g., 
Schr\"{o}dinger equation, Dirac system) can be realized as some
canonical system.

Put $(A_t(z),B_t(z)):=(1,0)Y(t,z)$, $t\in[0,L]$, and
$E_t(z):=A_t(z)-iB_t(z)$. Then the function $E_t$ is a Hermite--Biehler
function. Moreover, the chain of de Branges subspaces of the space
$\mathcal{H}(E_L)$ is given by $\mathcal{H}(E_t), \,t\in [0, L]$, {$t$ is not an inner point of $H$-indivisible interval.}

Let $L^2([0,L], H)$ be the space of 2-vectors equipped with the norm
$$
\|g\|^2_{L^2([0,L], H)}:=\int_0^L \langle H(t)g(t), {g(t)}\rangle dt,
$$                                
where $\langle\cdot, \cdot\rangle$ stands for the usual inner product in $\mathbb{C}^2$.
There exists the generalized Fourier transform $\mathfrak{F}$ which
is unitary from $L^2([0,L], H)$,  subject to some natural
factorization, to $\mathcal H (E_L)$.


The de Branges inverse spectral theorem states that 
{\it the mapping $H\mapsto E_L$ is the one-to-one
correspondence between the  canonical systems and the regular de
Branges spaces}.  A de Branges space $\he$ is called 
{\it regular} if it is closed under forming
difference quotients $F(z) \mapsto \frac{F(z)-F(w)}{z-w}, w\in \mathbb C$.

We will say that $H$ consists of indivisible intervals if the union
of $H$-indivisible intervals is of full measure on $[0,L]$. If,
moreover, any point $x\in(0,L]$  either belongs to some
indivisible interval or is the right end of some indivisible
interval, we will say that $H$-indivisible intervals {\it accumulate
only on the left}.

The following theorem describes 
the regular spaces with the (strong) localization property 
in terms of the corresponding Hamiltonians.
Since ${\rm dim}\, \mathcal{H}(E_{t''})/\mathcal{H}(E_{t'}) = 1$ 
if and only if $I=(t',t'')$ is an $H$-indivisible interval,
this result is a direct corollary of Theorems \ref{chain}
and \ref{strongchain}. 

\begin{theorem} 
\label{chain1}
Let $\he$ be a regular de Branges space such that support $T$ of its spectral measure $\mu$ is power separated. Let $H$ be the corresponding Hamiltonian.

\begin{enumerate}
\begin{item}
The space $\he$ has the localization property if and only if $H$
consists of indivisible intervals accumulating only on the left.
\end{item}

\begin{item}
The space $\he$ has the strong localization property if and
only if the $H$-indivisible intervals accumulate only to $0$ 
\textup(i.e., $[0,L]=\overline{\cup_{n=1}^\infty I_n}$, where indivisible intervals $I_{n}$ and
$I_{n+1}$ have a common endpoint\textup).
\end{item}
\end{enumerate}
\end{theorem}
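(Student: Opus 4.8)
The plan is to translate the two abstract criteria, Theorems \ref{chain} and \ref{strongchain}, into the language of the Hamiltonian $H$ via the de Branges chain $\{\mathcal{H}(E_t)\}$. Since $\he$ is regular, the de Branges inverse spectral theorem makes $H \mapsto E_L$ a bijection, so I may fix $E=E_L$ and use that the de Branges subspaces of $\he$ are exactly the spaces $\mathcal{H}(E_t)$, with $t\in[0,L]$ not an inner point of an indivisible interval, forming an increasing chain that grows from the trivial space at $t=0$. The one structural input I will use repeatedly is the cited equivalence $\dim \mathcal{H}(E_{t''})/\mathcal{H}(E_{t'}) = 1 \iff (t',t'')$ is $H$-indivisible, together with its consequence that crossing a portion of $H$ of positive non-indivisible measure produces a continuum of intermediate subspaces, hence an infinite-dimensional quotient.

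For (i), I would first record the dictionary: $\mathcal{H}(E_{t_0})$ admits a de Branges subspace of codimension $1$ precisely when $t_0$ is the right endpoint of an indivisible interval $(t_1,t_0)$ with $\mathcal{H}(E_{t_1}) \neq \{0\}$, that is, exactly when $\dim\mathcal{H}(E_{t_0})\ge 2$ and $t_0$ is such a right endpoint. Assume $\he$ has the localization property. By Theorem \ref{chain} every $\mathcal{H}(E_{t_0})$ with $\dim\mathcal{H}(E_{t_0})\ge 2$ has a codimension-$1$ subspace, so every such $t_0$ is the right end of an indivisible interval; a chain point $x\in(0,L]$ with $\dim\mathcal{H}(E_x)=1$ satisfies, by the dictionary, that $(0,x)$ is indivisible, so $x$ is again a right endpoint; and inner points of indivisible intervals lie in an interval by definition. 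Hence every point of $(0,L]$ belongs to an indivisible interval or is the right end of one, which (as disjoint open intervals are countable, so their right endpoints form a null set) is exactly the statement that $H$ consists of indivisible intervals accumulating only on the left. The converse runs the dictionary backwards: if every point of $(0,L]$ is covered in this way, then any $\mathcal{H}(E_{t_0})$ with $\dim\mathcal{H}(E_{t_0})\ge 2$ has $t_0$ a right endpoint and hence a genuine codimension-$1$ subspace, so Theorem \ref{chain} yields localization.

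For (ii), I would translate Theorem \ref{strongchain} the same way. Strong localization is equivalent to every nonzero de Branges subspace $\mathcal{H}(E_{t_0})$, $t_0\in(0,L]$, having finite codimension in $\he$, i.e. $\dim \he / \mathcal{H}(E_{t_0}) < \infty$. By the structural input this quotient is finite exactly when $(t_0,L)$ meets only finitely many indivisible intervals and carries no non-indivisible part of positive measure; demanding this for every $t_0>0$ forces the indivisible intervals to fill $[0,L]$ up to a null set and to have their endpoints accumulate only at $0$. This is precisely the condition $[0,L]=\overline{\cup_n I_n}$ with $I_n$, $I_{n+1}$ sharing an endpoint, and conversely that tiling makes every $(t_0,L)$, $t_0>0$, contain only finitely many $I_n$, giving finite codimension.

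The routine parts are the chain parametrization and the codimension bookkeeping, all supplied by de Branges' structure theory. The one point needing genuine care, and the main obstacle, is the behavior across a non-indivisible portion of $H$: I must justify that positive non-indivisible measure in $(t',t'')$ produces an infinite-dimensional quotient $\mathcal{H}(E_{t''})/\mathcal{H}(E_{t'})$ (equivalently, no codimension-$1$ intermediate step and infinite codimension), since this is what makes the absence of an absolutely continuous part both necessary and sufficient in the two translations. The only other delicate bookkeeping is at the bottom of the chain, the zero- and one-dimensional subspaces, where the exclusion of one-dimensional subspaces in Theorem \ref{chain} must be matched against the fact that the first indivisible interval's right endpoint already satisfies the Hamiltonian condition; these are exactly consistent, so no separate argument is needed beyond noting it.
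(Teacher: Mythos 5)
Your proposal is correct and follows exactly the paper's route: the paper derives Theorem \ref{chain1} as a direct corollary of Theorems \ref{chain} and \ref{strongchain} via the same dictionary $\dim \mathcal{H}(E_{t''})/\mathcal{H}(E_{t'}) = 1 \Longleftrightarrow (t',t'')$ is $H$-indivisible, applied along the de Branges chain $\{\mathcal{H}(E_t)\}$. You merely spell out the bookkeeping (right endpoints versus codimension-$1$ subspaces, infinite quotients across non-indivisible parts, the bottom of the chain) that the paper leaves implicit, and these details are all standard and correctly handled.
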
 

So, the localization property of type $1$ corresponds to the case when the only one accumulation point is $0$.
It is not difficult to prove that there are  exactly $N$ accumulation points 
of the indivisible intervals if and only if
the de Branges space $\mathcal H(E_L)$ has the localization property of type $N$, 
i.e., there exist exactly $N$ attraction sets.
In general situation the ordering structure of attraction sets is the same as the ordering structure of the accumulation points of the indivisible intervals.

\textbf{Notations.} We will denote by $\mathcal{P}$ the set of all polynomials.  Throughout this paper the notation $U(x)\lesssim V(x)$ (or, equivalently,
$V(x)\gtrsim U(x)$) means that there is a constant $C$ such that
$U(x)\leq CV(x)$ holds for all $x$ in the set in question, $U, V\geq 0$. We write $U(x)\asymp V(x)$ if both $U(x)\lesssim V(x)$ and
$V(x)\lesssim U(x)$.


\section{Equivalent forms of zeros' localization\label{locsec}}
{
In this section we will prove Theorem \ref{local}.
Note that the implication (iii) $\Longrightarrow$ (iv) is trivial
as well as the implication (ii) $\Longrightarrow$ (i) (take $S = \mathbb{C}
\setminus \cup_n D(t_n, 1)$). To show the equivalence of the four conditions
we will prove that (i) $\Longrightarrow$ (ii), (ii) $\Longleftrightarrow$ (iv)
and (ii)\& (iv) $\Longrightarrow$ (iii).

In the proof of these implications it will be convenient 
to work not in the space of the Cauchy transforms 
$\mathcal{H}(T,\mu)$, but in the associated de Branges space $\mathcal{H}
= A\mathcal{H}(T,\mu)$.

{ 
A sequence $\{z_k\}\subset \CC$ will be said to be {\it lacunary} 
if $\inf_k |z_{k+1}|/|z_k| >1$. A zero genus canonical product over a lacunary 
sequence will be said to be a {\it lacunary canonical product}.}

The following result will play an important role in what follows. 
We will often need to verify that a certain entire function belongs 
to the de Branges space $\mathcal{H} = A\mathcal{H}(T,\mu)$. 
The following criterion is a special case of \cite[Theorem 26]{br}:

\begin{theorem}[Theorem 26 from the de Branges' book] 
\label{t26}
Let $\mathcal{H} = A\mathcal{H}(T,\mu)$ be a de Branges space. 
An entire function $F$ belongs to $\mathcal{H}$ if and only 
if $F/A$ is a function of bounded type \textup(i.e., 
a ratio of two bounded analytic functions\textup)  both in $\CC^+$ and $\CC^-$, 
\begin{equation}
\liminf_{|y|\to \infty}\Big|\frac{F(iy)}{A(iy)}\Big| = 0,
\label{FiyA}
\end{equation}
and
\begin{equation}
\label{inc}
\sum_{n}\frac{|F(t_n)|^2}{|A'(t_n)|^2 \mu_n} <\infty.
\end{equation}
\end{theorem}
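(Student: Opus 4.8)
The plan is to deduce this criterion from the general membership theorem of de Branges by specializing it to the purely atomic spectral measure of $\mathcal{H} = A\mathcal{H}(T,\mu)$. Recall that $\mathcal{H}$ is realized as $\he$ for a Hermite--Biehler function $E = A - iB$, where $A = (E+E^*)/2$ is real entire with $\mathcal{Z}_A = T$ (all zeros simple, since $T$ is power separated). The general Theorem 26 of de Branges asserts that an entire $F$ lies in $\he$ if and only if $F/E$ and $F^*/E$ are of bounded type and of nonpositive mean type in $\CC^+$ and $\int_\RR |F(t)/E(t)|^2\,dt < \infty$. The task is then to check that, in the atomic case, these three requirements are exactly ``$F/A$ of bounded type in $\CC^+$ and $\CC^-$'', condition \eqref{FiyA}, and condition \eqref{inc}.

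First I would record the elementary computation underlying the isometry $\mathcal{H}(T,\mu)\to\he$. A general $F\in\mathcal{H}$ has the form $F = Af$ with $f(z) = \sum_n a_n\mu_n^{1/2}/(z-t_n)$, $a\in\ell^2$; since $A$ has a simple zero at each $t_n$, the limit $z\to t_n$ gives $F(t_n) = A'(t_n)a_n\mu_n^{1/2}$, whence $\|F\|^2 = \|a\|_{\ell^2}^2 = \sum_n |F(t_n)|^2/(|A'(t_n)|^2\mu_n)$. Thus \eqref{inc} is precisely the finiteness of the candidate norm, and $a_n := F(t_n)/(A'(t_n)\mu_n^{1/2})$ is the natural preimage. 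On the de Branges side this is the statement that the spectral measure $|E(t)|^{-2}\,dt$ collapses, through the eigenfunction (sampling) expansion, onto the atoms of $\mu$ carried by $T = \mathcal{Z}_A$ with weights $|A'(t_n)|^2\mu_n$, so that $\int_\RR |F/E|^2\,dt = \sum_n |F(t_n)|^2/(|A'(t_n)|^2\mu_n)$; this identifies \eqref{inc} with the integral condition.

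Next I would translate the bounded type conditions, which also disposes of the necessity direction. Because $A$ is real, $(F/A)^* = F^*/A$, so bounded type of $F/A$ in $\CC^-$ is equivalent to bounded type of $F^*/A$ in $\CC^+$; and since $E/A = 2/(1 + E^*/E)$ and its reciprocal are of bounded type in $\CC^+$ (there $E^*/E$ is inner), bounded type of $F/A$ in $\CC^+$ is equivalent to that of $F/E$, and likewise for $F^*$. Hence ``$F/A$ of bounded type in $\CC^+$ and $\CC^-$'' matches exactly the bounded type halves of de Branges' two conditions. Necessity is now transparent: if $F = Af\in\mathcal{H}$, then $f$ is the Cauchy transform of $\nu = \sum_n a_n\mu_n^{1/2}\delta_{t_n}$, and Cauchy--Schwarz with $\int d\mu(t)/(1+t^2)<\infty$ gives $\int d|\nu|(t)/(1+|t|)<\infty$, so $F/A = f$ is of bounded type in both half-planes and $f(iy)\to 0$ as $|y|\to\infty$, yielding \eqref{FiyA} (indeed with a genuine limit) together with \eqref{inc}.

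The main obstacle is the sufficiency direction, i.e. extracting the two de Branges mean type conditions from the single condition \eqref{FiyA}. The useful fact is that for a function of bounded type in a half-plane the mean type is a genuine limit, $v^+(g) = \lim_{y\to+\infty}\log|g(iy)|/y$, so that a single vertical sequence along which $|g(iy)|\to 0$ already forces $v^+(g)\le 0$. Concretely I would set $f = \sum_n a_n\mu_n^{1/2}/(z-t_n)$ with $a_n = F(t_n)/(A'(t_n)\mu_n^{1/2})\in\ell^2$, so that $\tilde F := Af\in\mathcal{H}$ interpolates $F$ on $T$ and $H := (F-\tilde F)/A = F/A - f$ is entire and of bounded type in both half-planes, hence of exponential type in the Cartwright class by Krein's theorem; since $f(iy)\to 0$, condition \eqref{FiyA} gives $\liminf_{|y|\to\infty}|H(iy)| = 0$. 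Via $A^* = A$ the behavior of $F/A$ as $y\to-\infty$ is that of $F^*/A$ as $y\to+\infty$, so that \eqref{FiyA} read together with the two-sided bounded type is meant to deliver nonpositive mean type for both $F/E$ and $F^*/E$; de Branges' Theorem 26 then gives $F\in\he=\mathcal{H}$, equivalently $H\equiv 0$ (the only entire element of $\mathcal{H}(T,\mu)$ being $0$). I expect the genuine difficulty to lie exactly in this last passage, namely turning the one vanishing vertical sequence granted by \eqref{FiyA} into mean type control on both rays, where the reality of $A$ and the two-sided bounded type hypothesis are indispensable.
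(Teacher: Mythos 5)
Your setup is sound and follows the intended route: the computation $F(t_n)=A'(t_n)a_n\mu_n^{1/2}$ identifying \eqref{inc} with the norm, the translation of bounded type between $A$ and $E$ via $E/A=2/(1+E^*/E)$ and $(F/A)^*=F^*/A$, the necessity direction, and the reduction of sufficiency to showing that the entire remainder $H=F/A-f$ vanishes, where $f=\sum_n \frac{F(t_n)}{A'(t_n)(z-t_n)}$ is the Lagrange series built from \eqref{inc} --- this is exactly the paper's sketch (``\eqref{inc} gives an interpolation series, \eqref{FiyA} ensures no additional entire term is present''). But your proposal stops at the decisive step, which you yourself flag as ``the genuine difficulty,'' and the route you indicate cannot close as described. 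First, \eqref{FiyA} supplies only one sequence $|y_k|\to\infty$, possibly on a single ray; from it you get, say, $v^+(H)\le 0$ but nothing about $v^-(H)$, and remainders survive: $F=Ae^{iz}$ has $F/A$ of bounded type in both half-planes, satisfies \eqref{inc} trivially (the sum is blind to $H$, since $AH$ vanishes on $T$), and $|F(iy)/A(iy)|=e^{-y}\to 0$ as $y\to+\infty$, yet $F\notin\mathcal{H}$. Second, and more seriously, even vanishing sequences on \emph{both} rays do not finish the job at the endpoint of your plan: the lacunary product $H(z)=\prod_{n\ge 1}\bigl(1+z^2/4^n\bigr)$ is entire of zero exponential type and of Cartwright class (hence of bounded type with zero mean type in both half-planes), and $H(\pm i2^n)=0$, so $\liminf_{|y|\to\infty}|H(iy)|=0$ --- nonpositive mean type on both rays leaves such nonzero remainders untouched. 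This also exposes a circularity in your second paragraph: the identity $\int_\RR |F/E|^2\,dt=\sum_n|F(t_n)|^2/(|A'(t_n)|^2\mu_n)$ holds for elements of $\he$, but for an arbitrary entire $F$ the sum does not see $H$ while the integral does, so \eqref{inc} cannot be equated with de Branges' integral condition before membership is established.

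What actually kills $H$ --- and what the appeal to \eqref{FiyA} is implicitly using --- is decay of $F(iy)/A(iy)$ along the \emph{whole} imaginary axis, in the full-limit sense (which indeed holds for every element of $\mathcal{H}$, since Cauchy transforms tend to $0$ as $|y|\to\infty$, so the criterion is not weakened thereby). Then $H(iy)\to 0$ on both rays, forcing $v^{\pm}(H)\le 0$, hence $H$ is of zero exponential type by Krein's theorem; a zero-type entire function bounded on $i\RR$ is bounded on each half-plane $\{\pm\re z>0\}$ by Phragm\'en--Lindel\"of, hence constant by Liouville, hence $\equiv 0$. Without this two-sided, full-limit reading of \eqref{FiyA} combined with the Phragm\'en--Lindel\"of step (equivalently, without the nonpositive mean type hypotheses of de Branges' original Theorem 26 \emph{plus} that step), the passage you defer cannot be completed, so the sufficiency direction remains unproved in your proposal; the rest of your argument is correct and coincides with the paper's.
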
   }

{ Let us briefly explain the idea of the proof of Theorem \ref{t26}.  Condition \eqref{inc} implies that we can write a Lagrange-type
interpolation series for $F/A$, that is, $\sum_{n}\frac{F(t_n)}{A'(t_n)(z-t_n)}$, 
while \eqref{FiyA} ensure that this series represents the function $F/A$ and no additional entire term is present.
Thus $F\in A\mathcal{H}(T,\mu)$. The necessity of conditions \eqref{FiyA} and \eqref{inc} follows from the definition of $\mathcal{H}(T,\mu)$.}
\medskip

We will show that in Theorem \ref{local} (ii) can be repalced by a stronger property (ii'):

\medskip

{(ii') \it for any $F\in \ho\setminus\{0\}$ and $M>0$ we have
$\#\{\mathcal{Z}_F \setminus \cup_n D(t_n, |t_n|^{-M})\} <\infty$.}

\medskip

{ 
{\bf (i) $\Longrightarrow$ (ii')}.
Assume that (ii') is not true. Then
for some $M>0$ there exists a nonzero function $F \in \mathcal{H}$
for which there exists an infinite number of zeros
$z\in \mathcal{Z}_F$ with $\dist(z, T)\ge |z|^{-M}$. 

Let $S$ be an unbounded set which satisfies (i).
Then we can choose two sequences $s_k\in S$
and $z_k \in \mathcal{Z}_F$ such that $2|z_k| \le |s_k| \le |z_{k+1}|/2$
and $\dist(z_k, T)\ge  |z_k|^{-M}$. Now put
$$
H(z) = F(z) \prod_k \frac{1-z/s_k}{1-z/z_k}.
$$
A simple estimate of the infinite products implies $|H(z)| 
\lesssim |z|^{M+1}|F(z)|$ for $|z|\ge 1$
and $\dist(z, \{z_k\})\ge |z_k|^{-M}/2$, in particular, for $z\in T$.
Now dividing $H$ by some polynomial $P$ of degree  $M+1$
with $\mathcal{Z}_F \subset \mathcal{Z}_F \setminus\{z_k\}$,
we conclude by Theorem \ref{t26}  that $\tilde H = H/P$ is in $\mathcal{H}$.
This contradicts (i) since $\mathcal{Z}_{\tilde H} \cap S$
is  an infinite set.   }

\bigskip

{\bf (ii)$\Longrightarrow$(iv)} 
Assume that (iv) is not true. Then there exist a nonzero 
function $F\in\ho$ and a sequence $\{z_n\}$, $z_n = x_n +iy_n$, 
of its multiple zeros such that
$\dist(\{z_n\}, T)\leq 1$, $\inf_n{|z_{n+1}|/|z_n| > 2}$.
Put
$$
h(z) = \biggl{[}\prod_n\frac{1-z/ x_n}{1-z/ z_n}\biggr{]}^2.
$$
It is not difficult to prove that the product converges, $\sup_{x\in
\rl} |h(x)|<\infty$ and $\sup_{y\in \rl} |h(iy)|<\infty$.
Indeed, since $|y_n|\le 1$, we have
$$
\biggl{|}\frac{1-iy/ x_n}{1-iy/ z_n}\biggr{|}^2=
\frac{x_n^2 +y^2}{x_n^2+(y-y_n)^2} \cdot \frac{x_n^2+y_n^2}{x_n^2}
\leq \biggl{(}1+\frac{4}{x_n}\biggr{)}
\biggl{(}1+\frac{y^2_n}{x^2_n}\biggr{)}.
$$
The series $\sum_n\frac{y_n^2}{x_n^2}$ and $\sum_n \frac{1}{x_n}$
converge, and so, $\sup_{y\in \mathbb{R}} |h(iy)|<\infty$. On the
other hand $\bigl{|}\frac{1-x/ x_n}{1-x/
z_n}\bigr{|}^2 \leq \frac{x_n^2+y_n^2}{x_n^2}$ for any
$x\in\mathbb{R}$.

It follows from Theorem \ref{t26} 
that the function $H =Fh$ is in $\ho$, and, clearly, $H$ 
has multiple real zeros. 
Thus, we can assume from the beginning that there exists
a sequence of {\it real multiple zeros} $z_n$ of $F$. If
there exist a big number $M\in \mathbb{N}$ and an infinite
subsequence $\{z'_n\}$ such that $\dist(z'_n, T) \geq C
|t_{k+1}-t_k|^{-M}$, $z'_n\in[t_k, t_{k+1}]$, then the function
$$
\dfrac{F(z)}{\prod_{k=1}^{M+2}(z-\lambda_k)}
\cdot\prod_n\dfrac{z-(z'_n+i)}{z-z'_n}
$$
(where $\lambda_k\notin\{z'_n\}$ are some zeros of $F$) is in $\ho$ and we get a contradiction with (ii'). If there is no
such $M$, then all multiple zeros are well-localized near
$\{t_n\}$, namely for each $k$ (but a finite number) there exists
a number $n_k$ such that $|z_k-t_{n_k}|\leq C_N |t_{n_k}|^{-N}$,
for any $N\in\mathbb{N}$. In this case, the function
$$
\tilde F(z) = \dfrac{f(z)}{\prod_{k=1}^{K+2}(z-\lambda_k)}
\cdot\prod_k\dfrac{(z-(z_k+i))(z-t_{n_k})}{(z-z_k)^2}
$$
is in $\ho$ for sufficiently big $K$ and we also get a
contradiction. To see that $\tilde F \in \ho$ note that $|\tilde
F(t_n)| \lesssim |F(t_n)|$ for sufficiently large $K$  
and also $|\tilde F(iy)| \lesssim |F(iy)|$, $|y|>1$, whence $\tilde F$ 
is in $\ho$ by Theorem \ref{t26}.
\bigskip

{\bf (iv) $\Longrightarrow$ (ii)}. 
Let $\{z_n\}$ be a sequence of zeros of
$F\in\ho\setminus\{0\}$ with the property $\dist(z_n, T)\geq 1$.
Without loss of generality we can assume that
$\inf_n{|z_{n+1}|/|z_n| > 2}$. Put
$$
h(z)=\prod_n\frac{1-z/ z_{2n+1}}{1-z/ z_{2n}}.
$$
From the standard estimates for infinite products we get
$|h(z)|\lesssim (1+|z|)$ when $\dist(z,\{z_n\})>1$. If $\lambda$
is a zero of function $f$, then the function
$\frac{F(z)}{z-\lambda}\cdot h(z)$ is in $\ho$ and has infinite
number of multiple zeros $\{z_{2n+1}\}$.
\bigskip

{\bf (ii')\&(iv) $\Longrightarrow$ (iii)}. 
Let us consider the disjoint
disks $D(t_n, c|t_n|^{-N})$. If there 
exists a nonzero function $F\in\ho$ with an infinite number
of zeros $\{z_n\}$ outside these disks, we
have a contradiction with (ii'). Assume now that all zeros of $F$, 
except a finite number, 
are localized in the disks $D(t_n, c|t_n|^{-M})$ for sufficiently
large $M$. If an infinite subsequence of disks $D(t_{n_k},
c|t_{n_k}|^{-M})$ contains two zeros $z_k$, $\tilde z_k$ of $F$,
then (passing to a sparser sequence if necessary) we can show that
the function
$$
\dfrac{F(z)}{\prod_{k=1}^{N+2}(z-\lambda_k)}
\cdot\prod_k\dfrac{(z-t_{n_k})^2}{(z-z_k)(z-\tilde z_k)}
$$
is in $\ho$ (again, apply Theorem \ref{t26}). \qed
\medskip

We finish this section by a simple remark.
\begin{remark}
If the space $\ho$ has no localization property, then there exist
a non-zero $F\in \ho$ and an entire function $U$ with lacunary zeros,
$$
U(z):=\prod_{k=1}^\infty\biggl{(}1-\frac{z}{u_k}\biggr{)},
\qquad |u_{k+1}|>10|u_k|,
$$
such that $F/V\in\ho$ for any divisor $V$ of $U$,
$V=\prod_{k\in \mathcal{N}}\bigl{(}1-\frac{z}{u_k}\bigr{)}$,
$\mathcal{N}\subset \mathbb{N}$. \label{Uremark}
\end{remark}

For the proof it is sufficient to take $F$ which does not satisfy (ii)
and construct $U$ as a zero genus product over a lacunary
sequence $u_k \in \mathcal{Z}_F$ with $\dist(u_k, T)\ge 1$.


\section{Localization and polynomial density}

This section is devoted to the proof of Theorem \ref{strlocal}.
In Subsection \ref{pd1} we show that 
the polynomial density implies the strong localization property. 
In Subsection \ref{pd2} we will prove the converse statement.

{
First of all we prove that  the localization property implies that $\mu_n$ decrease superpolynomially.
\begin{proposition} Let $\mathcal{H}(T,\mu)$ have the localization property. Then for any $M>0$
$$\mu_n\lesssim |t_n|^{-M}.$$
\label{mu_n}
\end{proposition}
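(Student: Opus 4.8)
The plan is to argue by contraposition: assuming that $\mu_n \lesssim |t_n|^{-M}$ fails for some fixed $M>0$, I would construct a nonzero function in $\ho = A\mathcal{H}(T,\mu)$ whose zeros escape the localization disks, contradicting property (ii'). Since the failure of $\mu_n \lesssim |t_n|^{-M}$ means $\sup_n \mu_n |t_n|^M = \infty$, I first extract a lacunary subsequence $\{t_{j_k}\}$ (with $|t_{j_{k+1}}| \ge 2|t_{j_k}|$) along which $\mu_{j_k}|t_{j_k}|^M \to \infty$; in particular $\mu_{j_k} \ge |t_{j_k}|^{-M}$ for all large $k$. The core idea is to manufacture, near each $t_{j_k}$, a single zero displaced into the adjacent gap far enough to break (ii') but close enough that membership in $\ho$ survives.

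Concretely, put $\eta_k = |t_{j_k}|^{-(N+M)}$ and $w_k = t_{j_k} + \eta_k$, where $N$ is the exponent in \eqref{powsep}. By \eqref{powsep} we have $0 < \eta_k \ll |t_{j_k}|^{-N}$, so each $w_k$ lies strictly inside a gap of $T$ and $\dist(w_k,T) = \eta_k$. Fixing any $t_0 \in T$ outside the subsequence, I set
$$
G(z) = \frac{A(z)}{z-t_0}\prod_k \frac{z-w_k}{z-t_{j_k}}.
$$
The product converges and is of bounded type in $\CC^+$ and $\CC^-$ (it is a ratio of two zero-genus canonical products over lacunary real sequences, multiplied by $1/(z-t_0)$), and the zeros $t_{j_k}$ of $A$ cancel the poles, so $G$ is entire with zero set $\big(T \setminus (\{t_0\}\cup\{t_{j_k}\})\big) \cup \{w_k\}$.

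The key observation is that $G$ vanishes at every $t_n$ with $n \notin \{0\}\cup\{j_k\}$, since there $A(t_n)=0$ while the rational factor is regular. Hence in the interpolation criterion \eqref{inc} only $t_0$ and the $t_{j_k}$ contribute. A direct computation gives $G(t_{j_k}) = -\eta_k\, A'(t_{j_k})(t_{j_k}-t_0)^{-1}\prod_{l\ne k}\frac{t_{j_k}-w_l}{t_{j_k}-t_{j_l}}$, where the residual products are uniformly bounded by the standard infinite-product estimates. Thus, up to a bounded factor and one harmless term from $t_0$, the sum in \eqref{inc} reduces to
$$
\sum_k \frac{\eta_k^2}{|t_{j_k}-t_0|^2\,\mu_{j_k}} \lesssim \sum_k \frac{|t_{j_k}|^{-2(N+M)}}{|t_{j_k}|^2\,|t_{j_k}|^{-M}} = \sum_k |t_{j_k}|^{-2N-M-2} < \infty,
$$
where the cancellation of $\mu_{j_k}$ against $\eta_k^2$ is exactly what the hypothesis $\mu_{j_k}\ge|t_{j_k}|^{-M}$ provides. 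Condition \eqref{FiyA} holds because the factor $1/(z-t_0)$ forces $|G(iy)/A(iy)| \asymp |y|^{-1}\to 0$, so $G \in \ho$ by Theorem \ref{t26}. Since $\dist(w_k,T) = |t_{j_k}|^{-(N+M)} > |w_k|^{-(N+M+1)}$ for infinitely many $k$, the set $\mathcal{Z}_G \setminus \cup_n D(t_n,|t_n|^{-(N+M+1)})$ is infinite, contradicting (ii').

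The step I expect to be most delicate is the verification that $G\in\ho$, and in particular the reduction of the sum \eqref{inc}. The construction succeeds precisely because $G$ vanishes automatically at every unperturbed point of $T$, so \eqref{inc} only tests the weights $\mu_{j_k}$ at the perturbed points — which is exactly where the failure of super-polynomial decay supplies the mass needed to absorb the displacement $\eta_k$. Checking the bounded-type property of $G/A$ and the uniform boundedness of the residual products $\prod_{l\ne k}(\cdot)$ is routine but relies on the lacunarity of $\{t_{j_k}\}$ through the infinite-product estimates already used in Section \ref{locsec}. Finally, the choice $\eta_k = |t_{j_k}|^{-(N+M)}$ is dictated by two competing requirements: $\eta_k$ must exceed $|t_{j_k}|^{-(N+M+1)}$ so that $w_k$ escapes its localization disk, yet stay well below the gap length $\asymp |t_{j_k}|^{-N}$ so that $w_k$ remains an isolated zero at distance exactly $\eta_k$ from $T$.
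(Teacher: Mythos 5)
Your proof is correct, and it shares the paper's overall skeleton (contraposition, extraction of a lacunary subsequence along which $\mu_{n_k}\ge |t_{n_k}|^{-M}$, membership test via Theorem~\ref{t26}), but the actual construction is genuinely different. The paper violates condition (iv) of Theorem~\ref{local} rather than (ii'): it takes $U$ to be the lacunary product with zeros $\{t_{n_{10k}}\}$ and checks via Theorem~\ref{t26} that
$$
A(z)\,U^3(z)\prod_{k}\Bigl(1-\frac{z}{t_{n_k}}\Bigr)^{-1}
$$
belongs to $\mathcal{H}=A\mathcal{H}(T,\mu)$; this function has triple zeros at every $t_{n_{10k}}$, hence infinitely many multiple zeros. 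There the convergence of \eqref{inc} is driven by the superpolynomial decay of the lacunary quotient $U^3(z)\prod_k(1-z/t_{n_k})^{-1}$ along $\{t_{n_k}\}$ (the zero-counting function of $U^3$ is a fixed fraction of that of the denominator), against which the polynomially large weights $1/\mu_{n_k}\le |t_{n_k}|^{M}$ are harmless; in your argument the same role is played by the smallness of $\eta_k^2=|t_{j_k}|^{-2(N+M)}$ against $\mu_{j_k}\ge |t_{j_k}|^{-M}$. Your route is more quantitative: it needs the separation exponent $N$ from \eqref{powsep} to place $w_k$ inside the gap yet outside the disk $D(t_{j_k},|t_{j_k}|^{-(N+M+1)})$, and it relies on the strengthened form (ii') established in Section~\ref{locsec}, whereas the paper's construction is a one-liner that never leaves the set $T$. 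Note that your use of the hypothesis is correctly placed and in fact forced: to contradict (ii') the displacements $\eta_k$ must be at least polynomially large in $|t_{j_k}|^{-1}$, and then the summability of $\eta_k^2/\mu_{j_k}$ in \eqref{inc} is exactly what the lower bound $\mu_{j_k}\ge |t_{j_k}|^{-M}$ provides; your remaining verifications (uniform boundedness of the residual products by lacunarity, boundedness of the perturbing product on $i\mathbb{R}$ giving \eqref{FiyA} through the factor $(z-t_0)^{-1}$, bounded type of $G/A$ as a quotient of Cartwright-class lacunary products) are all sound.
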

\begin{proof}
Assume the converse. Then there exists $M>0$ and infinite subsequence $\{n_k\}$ such that $\mu_{n_k}\geq |t_{n_k}|^{-M}$.
Without loss of generality we can assume that $\{t_{n_k}\}$ is lacunary. Let $U$ be the lacunary product with zeros $t_{n_{10k}}$. From Theorem \ref{t26} we conclude that the function
$$A(z)U^3(z)\prod_{k}\biggl{(}1-\frac{z}{t_{n_k}}\biggr{)}^{-1}$$
belongs to the de Branges space $\mathcal{H}:=A\mathcal{H}(T,\mu)$. This contradicts to the property (iv) from Theorem \ref{local}.
\end{proof}
}
\subsection{Polynomial Density $\Longrightarrow$ Strong Localization Property\label{pd1}}
Let $f\in\mathcal{H}(T,\mu)\setminus\{0\}$. We claim that for any $M>0$ there exist $L>0$ and $R>0$ such that 
\begin{equation}
\inf\{|z|^L|f(z)|: \dist(z,T)\geq|z|^{-M}, |z|>R\}>0.
\label{LReq}
\end{equation} 
Assume the converse. Then there exist a function $f(z):=\sum_{n}\dfrac{d_n\mu_n}{z-t_n}$, $\{d_n\}\in\ell^2(\mu)$, and a sequence $z_j\rightarrow\infty$, $j\rightarrow\infty$, such that $\dist(z_j,T)\geq|z_j|^{-M}$ and $|f(z_j)|<|z_j|^{-j}$.
Since the polynomials are dense in $\ell^2(\mu)$ we can take $K$ to be the smallest nonnegative integer such that $\sum_nd_nt^K_n\mu_n\neq0$. Then we write
$$f(z_j)=\sum_n\frac{d_n\mu_n}{z_j-t_n}=\sum_{|t_n|<|z_j|/2}
\frac{d_n\mu_n}{z_j-t_n}+\sum_{|t_n|\geq|z_j|/2}
\frac{d_n\mu_n}{z_j-t_n}=\Sigma_1+\Sigma_2.$$
Let us estimate the sums $\Sigma_1$ and $\Sigma_2$ separately. Since $|z_j-t_n|\geq |z_j|^{-M}$ for any $n$, we have
$$|\Sigma_2|\leq |z_j|^M\sum_{|t_n|>|z_j|/2}|d_n|\mu_n=|z_j|^M\sum_{|t_n|>|z_j|/2}\frac{|d_n|}{|t_n|^{K+M+2}}|t_n|^{K+M+2}\mu_n$$
$$\leq\frac{2^{K+M+2}}{|z_j|^{K+2}}\|\{d_n\}\|_{\ell^2(\mu)}\cdot\|\{t_n^{K+M+2}\}\|_{\ell^2(\mu)}.$$
On the other hand,
$$
\Sigma_1
=\sum_{|t_n|<|z_j|/2}d_n\mu_n\biggl{(}\sum_{l=0}^{\infty}\frac{t^l_n}{z^{l+1}_j}\biggr{)}=
\sum_{|t_n|<|z_j|/2}d_n\mu_n\biggl{(}\sum_{l=0}^{K}\frac{t^l_n}{z^{l+1}_j}+r_{n,j}\biggr{)},$$
where $r_{n,j}\leq \dfrac{2|t_n|^{K+1}}{|z_j|^{K+2}}$. Hence,
$$
\begin{aligned}
 &
f(z_j)=\sum_n\frac{d_n\mu_n}{z_j-t_n}=\sum_{|t_n|<|z_j|/2}\sum_{l=0}^Kd_n\mu_n\frac{t^l_n}{z^{l+1}_j}+
O(|z_j|^{-K-2}) \\
& =
\sum_n\sum_{l=0}^Kd_n\mu_n\frac{t^l_n}{z^{l+1}_j}-\sum_{|t_n|\geq|z_j|/2}
\sum_{l=0}^Kd_n\mu_n\frac{t^l_n}{z^{l+1}_j}+O(|z_j|^{-K-2}) \\
& =\frac{\sum_nd_n\mu_nt^K_n}{|z_j|^{K+1}}+O(|z_j|^{-K-2}).
\end{aligned}
$$
We get a contradiction and so the claim is proved.

Thus, in particular, we have shown that for any $M>0$ all zeros 
of $f\in\mathcal{H}(T,\mu)\setminus\{0\}$ except, 
may be, a finite number, are in $\cup_n D(t_n,|t_n|^{-M})$. 
Therefore by Theorem \ref{local}, the space $\mathcal{H}(T,\mu)$ 
has the localization property, and so any disc $D(t_n,|t_n|^{-M})$ 
except a finite number contains at most one zero of $f$.

Now we show that the disk $D(t_k,|t_k|^{-M})$ contains 
exactly one point of $\mathcal{Z}_f$ if $|k|$ is sufficiently large. Put
$$g(z)=\sum_{n\neq k}\frac{d_n\mu_n}{z-t_n}.$$
Recall that $|f(z)|\geq c|z|^{-L}$ for $|z-t_k|=|t_k|^{-M}$ and sufficiently large $k$, where $L$ is the number from \eqref{LReq}.
Since $\mu_k=o(|t_k|^{-\tilde{L}})$, $|k|\rightarrow\infty$, for any $\tilde{L}>0$, we conclude that $|f(z)-g(z)|<c|z|^{-L}/2$
for $|z-t_k|=|t_k|^{-M}$, $|k|\geq k_0$.

Put $F=Af$, $G=Ag$. Then $F$, $G$ are entire and 
$|F-G|<|G|$ on $|z-t_k|=|t_k|^{-M}$, $|k|\geq k_0$. 
By the Rouch\'{e} theorem, $F$ and $G$ have the same 
number of zeros in $D(t_k,|t_k|^{-M})$, $|k|\geq k_0$. 
Since $G(t_k)=0$, we conclude that $F=Af$ has a zero in 
$D(t_k,|t_k|^{-M})$, $|k|\geq k_0$. The strong localization property is proved.

\subsection{Strong Localization $\Longrightarrow$ Polynomial
Density.\label{pd2}} This implication is almost trivial. Let $\{u_n\} \in
\ell^2$ be a nonzero sequence such that $\sum_{n}  u_n t_n^k
\mu_n^{1/2} = 0$ for any $k\in \mathbb{N}_0$. Consider the
function
$$
F(z) = A(z) \sum_n \frac{u_n \mu_n^{1/2} }{z-t_n}.
$$
Then $F$ belongs to the de Branges space $\ho
= \mathcal{H}(T,\mu)$ and since all the moments of $u_n$ are zero, it is
easy to see that $|F(iy)/A(iy)|=  o(|y|^{k})$ as $|y|\to\infty$ for any $k>0$.
On the other hand, since we have the strong localization property,
for any $M>0$ all but a finite number of zeros of $f$ lie in
$\cup_n D(t_n, r_n)$, where $r_n = |t_n|^{-M}$ and
$\#\bigl{(} \mathcal{Z}_f\cap D(t_n,r_n)\bigr{)} \leq 1$ for all indices $n$ except,
possibly, a finite number. 

Let $T_1$ be the set of those $t_n$ for which the corresponding 
disk $D(t_n,r_n)$ contains exactly one zero of $F$ (denoted by $z_n$
with the same index $n$)
and let $A=A_1A_2$ be the corresponding factorization of $A$,
where $A_1$ is some Hadamard product with zeros
in $T_1$ and $A_2$ is in this case just a polynomial. 
Put
$$
F_1(z) = A_1(z)\prod_{t_n \in T_1} \frac{z-z_n}{z- t_n}.
$$
We can choose $M$ to be so large that the above product converges, and, moreover, 
$|F_1(z)|\asymp |A_1(z)|$
when $\dist(z, T_1) \ge c|z|^{-N}/2$, 
$N$ being the constant 
from \eqref{powsep}. Then we can write $F = F_1F_2$, and it is easy to 
see that in this case $F_2$ is at most a polynomial. 
Thus, $|F(iy)|/|A(iy)| \gtrsim |y|^{-M}$, $y\to\infty$,
for some $M>0$, and we have got a contradiction.
\qed

\subsection{Forced strong localization for ``good" measures. \label{force}} 
The next theorem shows that under some regularity conditions on
$(T, \mu)$ even localization property (not the strong one!) implies
that the polynomials are dense in $L^2(\mu)$.

\begin{theorem}
\label{converse} 
Let $\tilde{T}=\{t_{n_k}\}$ be an infinite
subsequence of $T=\{t_n\}$ such that the polynomials are
belong to the space $L^2(T\setminus\tilde{T}, \mu|_{T\setminus\tilde{T}})$ and
are not dense there. Suppose that there exists a positive
function $\MM$ on $\mathbb{R}$ such that $\mathcal{M}(t_n)=\mu^{1/2}_n$ and
$\MM$ is a normal weight \textup(that is, $\log \MM(e^t)$ is a convex
function of $t$\textup). Then $\mathcal{H}(T,\mu)$ does not have the localization property.
\end{theorem}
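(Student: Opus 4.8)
The plan is to argue by contradiction: assuming $\ho = A\mathcal{H}(T,\mu)$ has the localization property, I would produce a nonzero element with infinitely many multiple zeros, which by the equivalence (iv) in Theorem \ref{local} is impossible. The starting point is the failure of polynomial density on $T\setminus\tilde T$. Writing $\nu := \mu|_{T\setminus\tilde T}$, non-density gives a nonzero sequence $\{b_n\}$, supported on $T\setminus\tilde T$ and lying in $\ell^2(\nu)$, which annihilates all monomials: $\sum_{t_n\in T\setminus\tilde T} b_n t_n^k\mu_n = 0$ for every $k\ge 0$. Factor $A = A_0 A_1$, where $A_1$ is the canonical product over $\tilde T$ and $A_0$ the one over $T\setminus\tilde T$, and set $f_0(z) = \sum_{t_n\in T\setminus\tilde T} \frac{b_n\mu_n}{z-t_n}\in\mathcal{H}(T,\mu)$ and $F_0 := Af_0 = A_1 G$ with $G := A_0 f_0$ entire. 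Exactly as in the proof of Subsection \ref{pd2}, the vanishing of all moments forces super-polynomial decay along the imaginary axis, $|F_0(iy)/A(iy)| = |G(iy)/A_0(iy)| = o(|y|^{-m})$ for every $m>0$.

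Next I would install infinitely many multiple zeros. Note that $F_0$ already vanishes simply at every point of $\tilde T$, coming from the factor $A_1$. Choosing a lacunary subsequence $\{s_k\}\subset\tilde T$ and a suitable polynomial $P$, I would consider
\[
\hat F(z) = \frac{1}{P(z)}\, F_0(z)\prod_k\Bigl(1-\frac{z}{s_k}\Bigr),
\]
so that $\hat F$ acquires a double zero at each $s_k$ and hence has infinitely many multiple zeros. It remains to check that $\hat F\in\ho$ via Theorem \ref{t26}. Condition \eqref{FiyA} requires that the super-polynomial decay of $G/A_0$ beat the growth of the additional factor $\prod_k(1-iy/s_k)$; condition \eqref{inc} reduces, after using $G(t_n) = A_0'(t_n)b_n\mu_n$ on $T\setminus\tilde T$ together with the double zeros on $\{s_k\}$, to the convergence of a weighted sum of the form $\sum_{t_n\in T\setminus\tilde T}\bigl|\prod_k(1-t_n/s_k)\bigr|^2|b_n|^2\mu_n$. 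Both estimates should be controlled precisely by the regularity of the weight: since $\MM$ is a normal weight, the log-convexity of $\log\MM(e^t)$ lets me compare the sizes of $\mu_n$ across $\tilde T$ and $T\setminus\tilde T$ and bound the lacunary product against the super-polynomially small data, with $P$ absorbing the remaining polynomial growth. Power separation \eqref{powsep} and Proposition \ref{mu_n} supply the auxiliary growth bounds.

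The main obstacle is exactly this membership verification: without a regularity hypothesis one cannot guarantee that the extra zero-creating factor keeps $\hat F$ inside $\ho$, and this is where normality is indispensable, being the quantitative mechanism that transfers the deficiency of polynomials on $T\setminus\tilde T$ to all of $T$. An equivalent, more structural route would instead assume localization, observe that the super-polynomially decaying $F_0$ cannot have $T$ itself as its attraction set (otherwise $|F_0/A|\gtrsim|y|^{-M}$, as in Subsection \ref{pd2}), so its attraction set is a proper subset whose complement is the zero set of a Hamburger class function; one then derives a contradiction from the fact that a normal weight is incompatible with such a sparse, finite-codimension defect, in the spirit of Theorem \ref{type2}. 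Either way we exhibit a nonzero function with infinitely many multiple zeros, so $\ho$ fails the localization property.
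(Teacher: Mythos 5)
Your opening moves coincide with the paper's Step~1: take an annihilator $\{b_n\}\in\ell^2(\nu)$, $\nu=\mu|_{T\setminus\tilde{T}}$, form its Cauchy transform, and record the super-polynomial decay of $F_0/A$ along $i\mathbb{R}$. But the centerpiece of your argument --- multiplying $F_0$ by an infinite product $U(z)=\prod_k(1-z/s_k)$ with real zeros $s_k\in\tilde{T}$ to create double zeros --- breaks down exactly at condition \eqref{inc}, the step you yourself flagged. Any infinite zero-genus canonical product with real zeros satisfies $|U(t)|\geq |t|^m$ for every $m$ once $|t|$ is large and $\dist(t,\{s_k\})\gtrsim |t|^{-N}$ (which the power separation \eqref{powsep} guarantees at the points $t_n\in T\setminus\tilde{T}$), \emph{no matter how sparse} $\{s_k\}$ is chosen; sparsifying helps with \eqref{FiyA} but not here. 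Hence \eqref{inc} for $\hat F=F_0U/P$ becomes $\sum_{t_n\in T\setminus\tilde{T}}|U(t_n)/P(t_n)|^2|b_n|^2\mu_n$, where the weight $|U(t_n)/P(t_n)|^2$ blows up super-polynomially while your only information on the data is $\sum_n|b_n|^2\mu_n<\infty$: non-density yields \emph{some} annihilator, with no decay you can prescribe, and you cannot modify $\{b_n\}$ without destroying the moment conditions. A polynomial $P$ can never absorb super-polynomial growth, and normality of $\MM$ is of no help at this point --- it controls pointwise values of the Cauchy transform, not the summability of data inflated by $U$. So generically $\hat F\notin\ho$ and no contradiction with (iv) is obtained.

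The paper's proof avoids precisely this trap by never introducing new real zeros. First, the moment identity $t^kf(t)=\frac{A(t)}{E(t)}\sum_n\frac{d_ns_n^k\nu_n}{t-s_n}$ is exploited \emph{on the real axis}, not merely along $i\mathbb{R}$; optimizing over $k$ via the Legendre transform --- this is where normality enters, through $(G^\#)^\#=G$ for the convex $G(s)=-\frac{1}{2}\log\MM(e^s)$ --- yields the real-axis majorant $|f(t)|\leq C|t|(\MM(t))^{1/2}$ near the points of $\tilde{T}$, i.e., smallness of order $\mu_{n_k}^{1/2}$ there. Your sketch never obtains any real-axis estimate, yet it is indispensable. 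Then Proposition \ref{majorant} \emph{shifts} the existing zeros of $f$ at $\tilde T$ using the quotients $\prod_k\frac{z-(t_{n_k}+ic\mu_{n_k})}{z-t_{n_k}}$ and $\prod_k\frac{z-(t_{n_k}+i)}{z-(t_{n_k}+ic\mu_{n_k})}$: these are only polynomially large at the remaining points of $T$ (so \eqref{inc} survives after division by a polynomial), and their blow-up of size $\mu_{n_k}^{-1}$ at $t_{n_k}$ is exactly compensated by the majorant --- the quantitative role of normality that your proposal gestures at but never supplies. The resulting $h\in\ho$ has infinitely many zeros at distance $1$ from $T$, contradicting condition (ii) rather than (iv). Your alternative ``structural route'' via attraction sets is likewise not a proof: after concluding $T_{f_0}\subsetneq T$, the claim that a normal weight is ``incompatible'' with a type-$\geq 2$ defect is exactly what must be shown, and the mechanism showing it is the majorization argument above. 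In short: correct starting point, but the decisive ingredients --- a real-axis majorant from the Legendre transform and zero-shifting by bounded Blaschke-type quotients instead of zero-adding by a canonical product --- are missing, and your membership verification fails as written.
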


As we will see in the proof of Theorem \ref{converse} there is a
general principle that the non-density of polynomials in $L^2(\mu)$
implies a certain majorization result in the corresponding de
Branges space. The next proposition shows how the majorization, in
its turn, implies the non-localization of zeros.

\begin{proposition}
Let $\he = A\mathcal{H}(T,\mu)$ be some de Branges spaces 
with the spectral measure $\mu$ such that $\mu_n = o(|t_n|^{-M})$ for any $M>0$. 
Assume that there exists a nonzero function $f\in\he$ such that, for some
infinite subsequence $n_k$ we have $f(t_{n_k}) = 0$ and
$$
|f(t)|\leq |E(t)|\cdot \MM(t),\qquad  t\in\mathbb{R},
$$
where $\MM\in L^\infty(\mathbb{R})$ and $\MM(t)\leq \mu^{{1/2}}_{n_k}$
for $|t-t_{n_k}|<|t_{n_k}|^{-N}$ for some $N>0$. Then $\he$ has no
localization property. \label{majorant}
\end{proposition}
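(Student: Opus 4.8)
The plan is to show that $\he$ violates the strengthened condition (ii$'$) of Section~\ref{locsec}: since conditions (i)--(iv) and (ii$'$) are all equivalent, it suffices to exhibit one nonzero $g\in\he$ with $\#\big(\mathcal Z_g\setminus\bigcup_nD(t_n,|t_n|^{-M})\big)=\infty$ for some $M$. Fix a large $M>N$ (chosen below) and put $w_k:=t_{n_k}+2|t_{n_k}|^{-M}$. By power separation \eqref{powsep} each $w_k$ lies strictly inside the gap $(t_{n_k},t_{n_k+1})$ for large $k$, so $w_k\notin T$ and $\dist(w_k,T)=2|t_{n_k}|^{-M}$. I would then set
\[
g(z):=f(z)\,\Pi(z),\qquad \Pi(z):=\prod_k\frac{z-w_k}{z-t_{n_k}}.
\]
Because $f(t_{n_k})=0$, the poles of $\Pi$ are cancelled and $g$ is entire, $g\not\equiv0$, with $g(w_k)=0$ for all $k$. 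As $\dist(w_k,T)>|w_k|^{-M}$ for large $k$, the zeros $w_k$ witness the failure of (ii$'$), and the entire proof reduces to verifying $g\in\he$.

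For the membership I would first globalize the majorant. Since $f\in\he$, both $f/E$ and $f^*/E$ belong to $H^2(\CC^+)$, hence are of bounded type; as $|f/E|\le\MM\le\|\MM\|_\infty$ on $\rl$, the Poisson representation (maximum principle for bounded type functions) yields $|f(z)|\le\|\MM\|_\infty|E(z)|$ for every $z\in\CC$. The factor $\Pi$ is handled by the standard product estimates already used in Section~\ref{locsec}: writing each factor as $1-\tfrac{2|t_{n_k}|^{-M}}{z-t_{n_k}}$ one checks that $\Pi$ converges, is of bounded type and mean type zero in $\CC^\pm$, and is uniformly bounded on $\rl\setminus\bigcup_kI_k$, where $I_k:=\{|t-t_{n_k}|<|t_{n_k}|^{-N}\}$ is the interval on which the hypothesis gives $|f/E|\le\mu_{n_k}^{1/2}$. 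Thus $g/E=(f/E)\Pi$ and $g^*/E$ are of bounded type, and by the definition of $\he$ it remains only to prove $\int_\rl|g/E|^2\,dt<\infty$ (equivalently, one could feed the same estimates into Theorem~\ref{t26}).

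This integral I would estimate piecewise. On $\rl\setminus\bigcup_kI_k$ one has $|g/E|^2\le\|\Pi\|_\infty^2\,|f/E|^2$, whose integral is $\le\|\Pi\|_\infty^2\|f\|_\he^2<\infty$. On each $I_k$ the pole of $\Pi$ meets the zero of $f$, and I would split $I_k$ at the radius $\delta_k:=2|t_{n_k}|^{-M}$. On the annulus $\delta_k\le|t-t_{n_k}|\le|t_{n_k}|^{-N}$ one has $|\Pi|\lesssim1$ and $|f/E|\le\mu_{n_k}^{1/2}$, so the contribution is $\lesssim\mu_{n_k}|t_{n_k}|^{-N}$, which is summable because $\mu_{n_k}$ decays superpolynomially (Proposition~\ref{mu_n}). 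On the core $|t-t_{n_k}|<\delta_k$ I would use $f(t_{n_k})=0$ to write $|f(t)/E(t)|\le|t-t_{n_k}|\,\sup_{|s-t_{n_k}|\le\delta_k}|(f/E)'(s)|$ together with $|\Pi(t)|\lesssim\delta_k/|t-t_{n_k}|$, giving $|g/E|\lesssim\delta_k\sup_{|s-t_{n_k}|\le\delta_k}|(f/E)'|$ there and hence a contribution $\lesssim\delta_k^3\big(\sup_{|s-t_{n_k}|\le\delta_k}|(f/E)'|\big)^2$. Everything therefore reduces to a polynomial bound $\sup_{|s-t_{n_k}|\le\delta_k}|(f/E)'|\lesssim|t_{n_k}|^B$ with $B$ independent of $k$: then the core contributions sum to $\lesssim\sum_k|t_{n_k}|^{2B-3M}$, which is finite once $M$ is large (recall $|t_{n_k}|\gtrsim|n_k|^\rho$).

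The one nonelementary ingredient, and the step I expect to be the main obstacle, is this derivative bound. It is a Bernstein-type inequality in the de Branges space: $f/E$ is bounded by $\|\MM\|_\infty$ off the real axis (by the globalized majorant) and by $\mu_{n_k}^{1/2}\le1$ on the interval $I_k$ of length $2|t_{n_k}|^{-N}$, while its local scale of oscillation is governed by the phase derivative $\varphi'$ of $E$. Power separation is decisive here: since $\varphi$ increases by $\pi$ across each gap $(t_n,t_{n+1})$ and $|t_{n+1}-t_n|\gtrsim|t_n|^{-N}$, the average of $\varphi'$ over $I_k$ is $\lesssim|t_{n_k}|^N$, so $f/E$ varies slowly at the scale $|t_{n_k}|^{-N}$ and, since the core $\{|s-t_{n_k}|\le\delta_k\}$ sits well inside $I_k$, one obtains $\sup_{|s-t_{n_k}|\le\delta_k}|(f/E)'|\lesssim|t_{n_k}|^N\sup_{I_k}|f/E|\lesssim|t_{n_k}|^N$; this is precisely why the majorant had to be carried off the real axis. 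Taking $B=N$ and then $M>N$ large enough that $\sum_k|t_{n_k}|^{2N-3M}<\infty$ completes the verification $g\in\he$, and the existence of such $g$ shows that $\he$ has no localization property.
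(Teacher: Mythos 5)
Your overall scheme (multiply $f$ by a product relocating the zeros $t_{n_k}$ to nearby points off $T$, then verify membership in $\he$ via Theorem \ref{t26} or the $H^2$ definition) is in the spirit of the paper's proof, but there is a genuine gap at exactly the step you flag as the main obstacle: the Bernstein-type bound $\sup_{|s-t_{n_k}|\le\delta_k}|(f/E)'|\lesssim|t_{n_k}|^N$. Your phase-average argument does not deliver it. The average of $\varphi'$ over $I_k$ is indeed $O(|t_{n_k}|^N)$, but $\varphi'$ need not be comparable to its average near the \emph{center} of $I_k$: under the hypotheses of the proposition the zeros of $E$ approach the points $t_{n_k}$ superpolynomially fast — in the paper this is quantified by Lemma \ref{zerloc}, which (under localization) gives $\dist(t_{n_k},\mathcal{Z}_E)\asymp\mu_{n_k}$, and $\mu_{n_k}=o(|t_{n_k}|^{-M})$ for every $M$. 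So $f/E$ has poles at distance $\approx\mu_{n_k}\ll\delta_k$ below $t_{n_k}$; your core disk $\{|s-t_{n_k}|\le\delta_k\}$ contains them, and the variation of $f/E$ on the real core is governed by the scale $\mu_{n_k}$, not $|t_{n_k}|^{-N}$, so no polynomial bound on $(f/E)'$ holds in general. Relatedly, your ``globalized majorant'' is wrong in the lower half-plane: from $f^*/E\in H^2(\CC^+)$ you get $|f(z)|\le\|\MM\|_\infty|E^*(z)|$ for $z\in\CC^-$, not $|f(z)|\le\|\MM\|_\infty|E(z)|$, and $|E^*/E|$ blows up precisely near $\mathcal{Z}_E$ — exactly where you would need boundedness to run a Cauchy-integral derivative estimate. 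Consequently the core contribution is not controlled and $g\in\he$ is not established.

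The paper sidesteps derivative bounds entirely by arguing \emph{by contradiction} and working at the natural scale $\mu_{n_k}$ rather than at a polynomial scale: assuming localization, Lemma \ref{zerloc} plus the interpolating-Blaschke structure of $E^*/E$ yield $|f/E|\lesssim\mu_{n_k}^{1/2}|t_{n_k}|^{-N}$ on the circles $|z-t_{n_k}|=\varepsilon\mu_{n_k}$ in \emph{both} half-planes (Poisson above, the Blaschke bound below); the zeros are first moved only to $t_{n_k}+ic\mu_{n_k}$, so that $|g|\asymp|f|$ on those circles, and then up to height $1$, the resulting amplification $\mu_{n_k}^{-1}$ being absorbed by $\MM\le\mu_{n_k}^{1/2}$ together with a preliminary division by a polynomial; membership follows from the discrete sum in Theorem \ref{t26}, and the final function violates the plain condition (ii) (zeros at distance $1$ from $T$), so the delicate condition (ii$'$) is never needed. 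Your construction could likely be repaired by adopting this framework: assume localization, invoke Lemma \ref{zerloc} to gain two-sided control of $f/E$ at scale $\mu_{n_k}$, and replace your claimed bound by $|(f/E)'|\lesssim\mu_{n_k}^{-1/2}$ near $t_{n_k}$; but as written, your direct (non-contradiction) argument has no access to the location of $\mathcal{Z}_E$, and the key estimate fails.
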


\begin{proof}
Assume that the converse holds. Dividing by a polynomial, we may
assume without loss of generality that
$$
|f(t)|\leq \frac{|E(t)|\mu^{1/2}_{n_k}}{|t_{n_k}|^N},\qquad
|t-t_{n_k}|<\mu_{n_k},
$$
where $N$ is some large fixed number. Let us show that there
exists $c>0$ such that the function
$$
g(z)=f(z)\prod_k \dfrac{z-(t_{n_k}+ic\mu_{n_k})}{z-t_{n_k}}
$$
satisfies $|g(t_{n_k})|\lesssim
|E(t_{n_k})|\mu^{1/2}_{n_k}|t_{n_k}|^{-N}$. First, note that if we
apply the Poisson formula in the upper half-plane to the function
$f/E$ we obtain the standard estimates that
$|f(z)/E(z)| \leq C \mu^{1/2}_{n_k} |t_{n_k}|^{-N}$
when $|z-t_{n_k}|<\mu_{n_k}/2$ and $z\in\mathbb{C}^+$.

To estimate $f/E$ in the lower half-plane, we will need the
following simple lemma (whose prove we omit).

\begin{lemma}
\label{zerloc} If $\he$ has the localization property, then we 
have $\dist (t_n, \mathcal{Z}_E) \asymp \mu_n$.
\end{lemma}

It is not difficult to show that $B(z) = \dfrac{E^*(z)}{E(z)}$ is
an interpolating Blaschke product (up to a possible finite number
of multiple zeros), and using Lemma \ref{zerloc} we get $|B(z)|
\lesssim 1$, $|z-t_{n_k}|\leq c \mu_{n_k}$, $z\in \mathbb{C}^-$.
So, $|f(z)/E(z)| \lesssim \mu^{1/2}_{n_k}
|t_{n_k}|^{-N}$, when $|z-t_{n_k}|=\varepsilon\mu_{n_k}$ for some
sufficiently small $\varepsilon>0$. The same estimate remains true
for $g/ E$, since $|g(z)|\asymp |f(z)|$ on the circle
$|z-t_{n_k}|=\varepsilon\mu_{n_k}$. Hence, $|g(t_{n_k})|\lesssim
|E(t_{n_k})|\mu^{1/2}_{n_k}|t_{n_k}|^{-N}$. Put
$$
h(z)=g(z)\prod_k\dfrac{z-(t_{n_k}+i)}{z-(t_{n_k}+ic\mu_{n_k})}.
$$
Then
$$
\sum_k\biggl{|}\frac{h(t_{n_k})}{E(t_{n_k})}\biggr{|}^2\mu_n\asymp
\sum_k\biggl{|}\frac{g(t_{n_k})}{E(t_{n_k})}\biggr{|}^2\frac{1}{\mu_n}
\lesssim\sum_k\frac{1}{|t_{n_k}|^{2N}} <\infty
$$
for suficiently large $N$. We get the inclusion $h\in\he$, and $h$
does not satisfy the condition (ii).
\end{proof}


\subsection{Proof of Theorem \ref{converse}}
{\it Step 1.} Let $\{s_n\}$ be a power separated real sequence and
$\nu=\sum_n\nu_n\delta_{s_n}$ be a positive measure. Suppose that
$\mathcal{P}\subset L^2(\nu)$ and $\overline{\mathcal{P}}\neq
L^2(\nu)$. 
By the non-density of polynomials, there
exists a nontrivial sequence $\{d_n\}\in \ell^2(\nu)$ such that
$$
\sum_n d_ns^k_n\nu_n=0,\qquad k\in\mathbb{N}_0.
$$
Put $f(z)=\frac{A(z)}{E(z)}\sum_n\frac{d_n\nu_n}{z-s_n}$. We have
$$
t^kf(t)-\frac{A(t)}{E(t)}\sum_n\frac{d_n\nu_n}{s_n-t}=
\frac{A(t)}{E(t)}\sum_nd_n\nu_n\frac{t^k-s^k_n}{t-
s_n}=0, \qquad k\in\mathbb{N}_0.
$$
So,
$$
|f(t)|\leq \inf_{k\in\mathbb{N}_0} \frac{1}{|t|^k}\cdot
\frac{|A(t)|}{|E(t)|} \cdot 
\biggl{|}\sum_n\frac{d_ns^k_n\nu_n}{s_n-t}\biggr{|},\qquad
t\in\mathbb{R}.
$$
If $\dist(t,\{s_n\})\geq  |t_n|^{-M}$ for some $M>0$, then
$$
|f(t)|\leq \inf_{k\in\mathbb{N}_0}\biggl{(}\frac{1}{|t|^k}
\sup_n|s_n|^k\nu^{1/2}_n\biggr{)}\cdot |t|^{\tilde{M}},
$$
for some $\tilde{M}$. 

{\it Step 2.} Put $\{s_n\}=T\setminus\tilde{T}$,
$\nu=\mu\bigr{|}_{T\setminus\tilde{T}}$, and define $f$ as in Step 1. 
Then, clearly, $f$ vanishes on $\tilde T$, and, dividing $f$ by a polynomial
if necessary, we obtain that there exists a nontrivial function $f$ with                                                           
$$
\begin{aligned}
\log|f(t)| &\leq \log \inf_{k\in\mathbb{N}_0}\frac{1}{|t|^k}
\sup_{t_n\in T\setminus\tilde{T}}|t_n|^k\mu^{1/2}_n \\
& = \inf_{k\in\mathbb{N}_0}\biggl{[}\sup_{t_n\in
T\setminus\tilde{T}}\biggl{(}{1/2}\log\mu_n+
k\log|t_n|\biggr{)}-k\log|t|\biggr{]}.
\end{aligned}
$$
Denote by $G^\#$ the Legendre transform of $G$,
$G^\#(x)=\sup_{t\in\mathbb{R}}(xt-G(t))$. Put $G(s)=-{1/2}\log
\MM(e^s)$. Then
$$
\log|f(t)|\leq \inf_{k\in\mathbb{N}_0} \big(G^\#(k)-k\log|t|\big), \qquad
|t-t_{n_k}|<\frac{1}{100|t|^N}.
$$
If $|t|>1$, then
$$ 
\inf_{k\in\mathbb{N}_0}\big(G^\#(k)-k\log|t|\big)
\leq C+\log|t|+\inf_{s\in\mathbb{R}}(G^\#(s)-s\log|t|)=C\log|t|-(G^\#)^\#(\log|t|).$$
From convexity of $G$ we conclude that $(G^\#)^\#=G$. Hence,
$$
|f(t)|\leq C|t|(\MM(t))^{1/2}, \qquad
|t-t_{n_k}|<\frac{1}{100|t|^N},\qquad |t|>1.
$$
Using Proposition \ref{majorant} we get the result.
\qed

\subsection{Polynomial approximation on discrete subsets of $\RR$; Hamburger's mistake.\label{conc}}  
Even in the case when $T$ is a zero set of some function in a Hamburger class, 
the density of polynomials in $L^2(T, \mu)$, $\mu = \sum_n \mu_n \delta_{t_n}$, 
is a subtle property. Such measures appear naturally in the 
Nevanlinna parametrization of all solutions of an indeterminate Hamburger 
moment problem (see the discussion in \cite{BS0, BS1}).

Interestingly, the work of Hamburger on this topic contained a mistake which 
remained unnoticed for about  fifty years.                                   
In paper \cite{Ham} Hamburger claimed that the polynomials are dense 
in the space $L^2(T, \mu)$, 
where $T$ is the zero set of some function $A$ of Hamburger class, whenever
\begin{equation}
\label{serA}
\sum_{t_n\in T}\frac{1}{\mu_n|A'(t_n)|^2}=\infty, \qquad
\sum_{t_n\in T}\frac{1}{(1+t^2_n)\mu_n|A'(t_n)|^2}<\infty.
\end{equation}
This result was then applied to a description of canonical measures
in the Nevanlinna parametrization. 

If true, Hamburger's result would, in particular, imply that
for the measure $\mu = \sum_{t_n\in T} |A'(t_n)|^{-2}\delta_{t_n}$ 
the polynomials are dense in $L^2(T, \mu)$.
In 1989 a gap in the Hamburger argument was found by C. Berg and H.L. Pedersen, 
and soon P. Koosis \cite{Koo2} gave an example of a 
Hamburger class function $A$ such that 
the polynomials are not dense in the space
$L^2(T, \sum_{t_n \in T} |A'(t_n)|^{-2}\delta_{t_n})$. 
The set $T$ from his construction consists of pairs of points which 
are exponentially close to each other and, in particular, 
$T$ is not a power separated sequence. 

Let us present now a simple counterexample 
to Hamburger statement with power separated $T$. Let
$$
A(z):=\prod_{n=1}^\infty\biggl{(}1-\frac{z}{n^2}\biggr{)}
\cdot\prod_{n=1}^\infty\biggl{(}1-\frac{z}{n^3+1/2}\biggr{)}
$$
$T:=\mathcal{Z}_A$, and $\mu:=\sum_{t_n\in T} |A'(t_n)|^{-2} 
\delta_{t_n}$. Then 
$A$ is of Hamburger class and \eqref{serA} is satisfied,  
but it is easy to check that the function
$A_1(z):=\prod_{n=10}^\infty\bigl{(}1-\frac{z}{n^2}\bigr{)}$ 
is in $A\mathcal{H}(T,\mu)$.
To see this, apply as usual Theorem \ref{t26} 
and the fact that the product 
$\prod_{n=1}^\infty\bigl{(}1-\frac{z}{n^2}\bigr{)}$
is bounded on $[0, \infty)$. Hence, by Theorem \ref{strlocal}, 
the polynomials are not dense in $L^2(\mu)$ (of course, one can give 
a direct proof of this statement).

Borichev and Sodin gave a criterion for the density of 
polynomials in $L^2(T, \mu)$, where $T$ is the zero set of some Hamburger
class function $A$ (see \cite[Corollary 1.1]{BS1}): the polynomials are dense in $L^2(T, \mu)$
if and only if for any $\tilde A$ such that $\tilde A$ is of Hamburger class 
and $\mathcal{Z}_{\tilde A} \subset \mathcal{Z}_A$
one has
$$
\sum_{t_n \in \mathcal{Z}_{\tilde A}} \frac{1}{\mu_n |\tilde A'(t_n)|^2}<\infty.
$$

One can give a similar criterion for the property that
the polynomials are not dense in the space
$L^2(T, \mu)$ of the above form,  
but their closure is of finite codimension
(the condition which appears in Theorem \ref{type2}).


\section{Structure of de Branges spaces with the localization property}
This section contains the proof of Theorem \ref{chain}.

It is well known that a de Branges space 
$\ho$ contains a de Branges subspace of codimension 
$1$ if and only if its spectral measure $\mu$ is finite, 
$\mu(\mathbb{R})<\infty$ (see \cite{br}).
So, it is sufficient to prove that the localization property 
is equivalent to the finiteness of all spectral measures 
corresponding to all de Branges subspaces.

\subsection{Localization $\Longrightarrow$ Finiteness of the Spectral Measures} Let
$\nu=\sum_k\nu_k\delta_{s_k}$ be a spectral measure of some de Branges
subspace $\ho_1 = \ho(\tilde E)$ of $\ho=\he$. 
Then the support of $\nu$ is  power separated. 
This follows immediately  from the following estimate of 
the continuous branch of the argument of $\tilde E$ on
$\mathbb{R}$ (see, \cite[Problem 93]{br}):
$$
(\arg \tilde E )(b)-
(\arg \tilde E )(a) \leq \pi \text{ whenever } (\arg E)(b)-(\arg E)(a)=\pi
$$
and the fact that for two neighbor points $s_k$ and $s_{k+1}$
in the support of $\mu$ we have $(\arg \tilde E )(s_{k})-
(\arg \tilde E )(s_{k+1}) = \pi$ (note that $\arg \tilde E$ is decreasing on $\mathbb{R}$).

Assume the converse, that is, let 
$\nu(\mathbb{R})=\infty$. Since $\{s_k\}$ is a power separated sequence,
there exists a subsequence $\{n_k\}_{k=1}^\infty$ and $M>0$ such that
$\nu_{s_{n_k}}|s_{n_k}|^M \gtrsim 1$. We will assume that the positive
sequence $s_{n_k}$ is lacunary, $s_{n_{k+1}}>10s_{n_k}$. Put
$$
U(z)=\prod_{k=1}^\infty\biggl{(}1-
\frac{z}{s_{n_{10k}}+i}\biggr{)}, \qquad
V(z)=\prod_{k=1}^\infty\biggl{(}1- \frac{z}{s_{n_{k}}}\biggr{)}.
$$
It is not difficult to see (applying once again Theorem \ref{t26}) 
that the function 
$$
A_{E_1}(z)\cdot\frac{U^2(z)}{V(z)}
$$
belongs to $\he$. This contradicts the localization property
(iv) from Theorem \ref{local}.


\subsection{Finiteness of the Spectral Measures $\Longrightarrow$ Localization}  We
need the following two results.

\begin{proposition}
Let $\heo$ be a de Branges subspace of $\he$ and let $U$ be a lacunary
canonical product,
$$
U(z)=\prod_{k=1}^\infty\biggl{(}1-\frac{z}{u_k}\biggr{)},\qquad |u_{k+1}|>10|u_k|.
$$
If $F\in\heo$ and $F/U \in \he$, then $F/U \in\heo$.
\label{Udiv}
\end{proposition}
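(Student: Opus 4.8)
The plan is to avoid a direct verification of the membership criterion (Theorem \ref{t26}) and instead exploit the division property of $\heo$ together with a single uniform norm estimate. Since $F/U\in\he$ is entire and $U$ vanishes (simply) at each $u_k$, the function $F=(F/U)\,U$ vanishes at every $u_k$. Set $U_N(z)=\prod_{k=1}^{N}(1-z/u_k)$. I would argue by induction that $F/U_N\in\heo$: indeed $F\in\heo$, and if $F/U_{j}=(F/U)\prod_{k>j}(1-z/u_k)\in\heo$ then it vanishes at $u_{j+1}$, so the division property of the de Branges space $\heo$ yields $F/U_{j+1}=(F/U_j)/(1-z/u_{j+1})\in\heo$.

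Because $\heo$ is a de Branges subspace, the inclusion $\heo\subseteq\he$ is isometric, so $\norm{F/U_N}_{\heo}=\norm{F/U_N}_{\he}$, and it suffices to bound $\norm{F/U_N}_{\he}^2=\int_{\rl}|F(t)|^2/(|U_N(t)|^2|E(t)|^2)\,dt$ uniformly in $N$. I would split the integral at $|t|=2|u_N|$. For $|t|>2|u_N|$ every factor satisfies $|1-t/u_k|\ge |t|/|u_k|-1>1$ (as $|u_k|\le|u_N|$ for $k\le N$), so $|U_N(t)|\ge 1$ and this part is at most $\norm{F}_{\he}^2$.

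The main point is the region $|t|\le 2|u_N|$, where $U_N$ is small near its zeros but $F$ vanishes there as well. Here I factor $U=U_N W_N$ with tail $W_N(z)=\prod_{k>N}(1-z/u_k)$ and use lacunarity: since $|u_{k+1}|>10|u_k|$ and $|t|\le 2|u_N|\le |u_{N+1}|/5$, one has $\sum_{k>N}|t|/|u_k|\le 2/9$, whence $7/9\le|W_N(t)|\le e^{2/9}$ with absolute constants. Consequently $1/|U_N(t)|^2=|W_N(t)|^2/|U(t)|^2\lesssim 1/|U(t)|^2$ on this region, so the corresponding part of the integral is $\lesssim\int_{\rl}|(F/U)(t)|^2/|E(t)|^2\,dt=\norm{F/U}_{\he}^2$. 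Combining the two regions gives $\norm{F/U_N}_{\he}^2\lesssim\norm{F}_{\he}^2+\norm{F/U}_{\he}^2$, uniformly in $N$.

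Finally, the uniformly bounded sequence $\{F/U_N\}\subset\heo$ has a weakly convergent subsequence with limit $g$ in the closed (hence weakly closed) subspace $\heo$. Since evaluation at any point is a bounded functional on a de Branges space, weak convergence forces $g(w)=\lim_j (F/U_{N_j})(w)$; as $U_N\to U$ locally uniformly we have $(F/U_N)(w)\to (F/U)(w)$, so $g=F/U$ and therefore $F/U\in\heo$. I expect the only real obstacle to be the uniform bound of the previous step, and specifically the control of the tail product $W_N$ near the zeros of $U_N$; the lacunarity hypothesis is exactly what makes $W_N$ comparable to an absolute constant on $|t|\le 2|u_N|$, turning the would-be singular contribution into $\norm{F/U}_{\he}^2$.
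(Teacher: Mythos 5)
Your proof is correct, but it follows a genuinely different route from the paper. The paper disposes of Proposition \ref{Udiv} in one line, by citing Theorem 26 of de Branges' book \cite{br} (Theorem \ref{t26} in the text): membership of $F/U$ in $\heo$ is checked through the analytic criterion --- bounded type of $(F/U)/E_1$ in both half-planes, the $\liminf$ condition along $i\RR$, and the $\ell^2$ summability condition with respect to the spectral data of $\heo$. You avoid that criterion entirely: you divide out the zeros $u_1,\dots,u_N$ one at a time using the division property of $\heo$ (valid since $F/U$ entire forces $F/U_j$ to vanish at $u_{j+1}$), exploit the isometric inclusion $\heo\subseteq\he$ to reduce everything to the single weighted integral $\int_\RR |F|^2/(|U_N|^2|E|^2)$, and obtain a bound uniform in $N$ by the correct splitting at $|t|=2|u_N|$, where lacunarity makes the tail $W_N=U/U_N$ comparable to an absolute constant ($7/9\le |W_N|\le e^{2/9}$, using $\sum_{k>N}|t|/|u_k|\le 2/9<1$); weak compactness plus boundedness of point evaluations and the locally uniform convergence $U_N\to U$ then identify the weak limit as $F/U\in\heo$. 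What your approach buys: it is self-contained Hilbert-space geometry, needing nothing about the spectral measure or the Hermite--Biehler function $E_1$ of the subspace (which the citation of Theorem \ref{t26} implicitly requires one to handle, including bounded-type and $\liminf$ verifications for $1/U$), and it shows the lacunarity constant $10$ enters only through the requirement that the tail sum be less than $1$, so any ratio greater than $3$ would do. What the paper's route buys is brevity: given the de Branges machinery already set up for repeated use elsewhere in the argument, the proposition really is immediate, whereas your argument, though elementary, is longer. I see no gaps: the division property at possibly non-real $u_k$ is standard for de Branges spaces, the tail estimates are accurate, and the weak-limit identification is sound.
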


\begin{proposition}
Let $\he$ be a de Branges space such that its spectral measure is
finite. Then there exist a de Branges subspace $\heo$ of
codimension $1$. Moreover, if $F\in\he$ and $F(\lambda)=0$, then
$F(z)/(z-\lambda)\in\heo$. 
\label{fDiv}
\end{proposition}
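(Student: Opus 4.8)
The plan is to realize the codimension-one subspace as the kernel of a single bounded linear functional which is available \emph{precisely} when $\mu$ is finite, and then to obtain the division statement directly from the way this functional behaves under the difference quotient.

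First I would pass to the Cauchy transform model $\he = A\mathcal{H}(T,\mu)$ and write each $F\in\he$ as $F = Ag$ with $g(z) = \sum_n \frac{a_n\mu_n^{1/2}}{z-t_n}$, $\{a_n\}\in\ell^2$ and $\|F\|_{\he} = \|\{a_n\}\|_{\ell^2}$. I would introduce
$$
\Phi(F) := \lim_{y\to+\infty} iy\,\frac{F(iy)}{A(iy)} = \lim_{y\to+\infty} iy\,g(iy).
$$
Since $t_n\in\RR$ gives $|iy/(iy-t_n)|\le 1$, a dominated-convergence argument (the dominating sequence $|a_n|\mu_n^{1/2}$ is summable because $\{a_n\}\in\ell^2$ and $\sum_n\mu_n<\infty$) shows that this limit exists and equals $\sum_n a_n\mu_n^{1/2}$. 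By Cauchy--Schwarz $|\Phi(F)|\le \mu(\RR)^{1/2}\|F\|_{\he}$, so the finiteness of $\mu$ makes $\Phi$ a bounded linear functional on $\he$. It is nonzero: for $F_0 = A\sum_n\frac{\mu_n}{z-t_n}\in\he$ (here $a_n=\mu_n^{1/2}$) one gets $\Phi(F_0)=\mu(\RR)>0$. Hence $\mathcal{N}:=\ker\Phi$ is a closed subspace of $\he$ of codimension exactly one.

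Next I would check that $\mathcal{N}$ is a de Branges subspace by verifying the three axioms of the de Branges axiomatic description \cite[Theorem 23]{br}. Continuity of point evaluations is inherited. Conjugation invariance follows from $\Phi(F^*) = \overline{\Phi(F)}$, which holds because $A^*=A$ and $g^*(z) = \sum_n\frac{\overline{a_n}\mu_n^{1/2}}{z-t_n}$. For the remaining axiom, take $F\in\mathcal{N}$ with $F(w)=0$ and set $G(z) = \frac{z-\bar w}{z-w}F(z)$; then $G\in\he$ with $\|G\|_{\he}=\|F\|_{\he}$, and writing $G = A\tilde g$ with $\tilde g = \frac{z-\bar w}{z-w}g$ we obtain
$$
\Phi(G) = \lim_{y\to+\infty}\frac{iy-\bar w}{iy-w}\cdot\bigl(iy\,g(iy)\bigr) = \Phi(F) = 0,
$$
so $G\in\mathcal{N}$. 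Thus $\mathcal{N}=\heo$ for some Hermite--Biehler $E_1$, and, sitting isometrically inside $\he$, it is a de Branges subspace of codimension one.

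Finally, for the division statement, let $F\in\he$ with $F(\lambda)=0$ and put $\tilde F = F/(z-\lambda)$. That $\tilde F\in\he$ follows from Theorem \ref{t26}: $\tilde F/A = g/(z-\lambda)$ is of bounded type in $\CC^{\pm}$, \eqref{FiyA} holds, and
$$
\sum_n\frac{|\tilde F(t_n)|^2}{|A'(t_n)|^2\mu_n} = \sum_n\frac{|F(t_n)|^2}{|t_n-\lambda|^2|A'(t_n)|^2\mu_n}\lesssim \|F\|_{\he}^2<\infty,
$$
since $|t_n-\lambda|$ is bounded below. It then remains only to note that $\Phi(\tilde F)=\lim_{y\to+\infty}\frac{iy}{iy-\lambda}\,g(iy)=0$, because $g(iy)\to 0$; hence $\tilde F\in\mathcal{N}=\heo$. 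The one genuinely delicate point is the very first step---recognizing that the correct functional is $\Phi(F)=\lim iy\,F(iy)/A(iy)$ and that its boundedness is equivalent to $\mu(\RR)<\infty$; once this is secured, the axiom check and the division both reduce to the simple transformation law of $\Phi$ under multiplication by $\frac{z-\bar w}{z-w}$ and by $\frac{1}{z-\lambda}$.
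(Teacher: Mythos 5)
Your proof is correct, and it takes a genuinely different route from the paper's. The paper disposes of Proposition \ref{fDiv} in one line, deriving it from \cite[Theorem 29]{br}, i.e.\ from de Branges' general structure theory; you instead construct the subspace by hand, as the kernel of the boundary functional $\Phi(F)=\lim_{y\to+\infty} iy\,F(iy)/A(iy)$, whose boundedness is exactly where $\mu(\RR)<\infty$ enters, and then verify the axioms of \cite[Theorem 23]{br} together with the transformation law of $\Phi$ under $F\mapsto \frac{z-\bar w}{z-w}F$ and $F\mapsto F/(z-\lambda)$. The computations are sound: dominated convergence gives $\Phi(F)=\sum_n a_n\mu_n^{1/2}$ with $|\Phi(F)|\le \mu(\RR)^{1/2}\|F\|_{\he}$, the representer of $\Phi$ is the explicit function $A(z)\sum_n \mu_n/(z-t_n)$, so $\ker\Phi$ is closed of codimension one, and the division statement reduces to Theorem \ref{t26} plus $g(iy)\to 0$. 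What your route buys is a self-contained argument with a concrete identification of the codimension-one subspace, which also makes the ``moreover'' part transparent; what the citation buys the authors is brevity, since de Branges' theorem already packages the axiom verification. Two small touch-ups you should make: the bound ``since $|t_n-\lambda|$ is bounded below'' tacitly assumes $\dist(\lambda,T)>0$ --- if $\lambda=t_{n_0}\in T$ then $F(\lambda)=0$ forces $a_{n_0}=0$, the term $n=n_0$ becomes the single finite quantity $|F'(t_{n_0})|^2/(|A'(t_{n_0})|^2\mu_{n_0})$, and the remaining $|t_n-t_{n_0}|$ are bounded below because $T$ is separated, so the estimate survives with a one-line fix; and when invoking \cite[Theorem 23]{br} you should record that $\ker\Phi\ne\{0\}$, which is automatic since it has codimension one in the infinite-dimensional space $\he$.
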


Proposition \ref{Udiv} follows immediately from Theorem 26 in
\cite{br}. It easy to derive Proposition \ref{fDiv} from Theorem
29 in \cite{br}. Now we will use Remark \ref{Uremark}. We fix a
non-trivial function $F\in\he$ and a lacunary canonical product $U$
such that $F/V\in\he$ for any divisor $V$ of $U$ and $F\slash U$ has at least one zero $z_0$.

From the de Branges Ordering Theorem (see \cite[Theorems 35 and 40]{br})
we know that all subspaces are ordered by inclusion 
and, moreover, may be parametrized by a real parameter. We denote
this chain by $\mathcal{B}:=\{\mathcal{H}(E_x)\}_{x\in\mathcal{N}}$, $\mathcal{N}\subset(0,1]$,
$\heo=\he$. The subspace $\mathcal{H}(E_x)$ is isometrically
embedded in $\mathcal{H}(E_y)$ if and only if $x\leq y$, $x,y\in\mathcal{N}$.
From finiteness of
the spectral measure of $\mathcal{H}(E_x)$ we conclude that for any
$x\in(0,1]$ the space $\mathcal{H}(E_x)$ contains de Branges
subspace $\mathcal{H}(E_y)$, $y<x$, of codimension $1$. 
This means that $(y,x)\cap \mathcal{N}=\emptyset$.
We conclude that $\mathcal{N}$ is at most countable, that is, there exist at most countable number of
different de Branges subspaces $\mathcal{H}(E_x)$.

Now, we represent $U$ as an infinite product of non-constant lacunary canonical products
$$
U(z)=\prod_{l\in \mathcal{N}}U_l(z).
$$
Put
$$
V^x(z)=\prod_{l \in \mathcal{N}, l>x}U_l(z).
$$ 
So, $V^0(z)\equiv U(z)$, $V^1(z)\equiv 1$.

\begin{lemma}
For any $x\in(0,1]$, $F/V^x\in\mathcal{H}(E_y)\text{ for any } y\geq x,\quad y\in\mathcal{N}$.
\end{lemma}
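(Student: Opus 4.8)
The plan is to first prove the sharper statement $P(y)$: $F/V^{y}\in\mathcal{H}(E_{y})$ for every $y\in\mathcal{N}$, and then to deduce the lemma by a single extra division. Throughout I use two facts fixed earlier: by the choice of $F$ in Remark \ref{Uremark}, $F/V\in\he$ for every divisor $V$ of $U$, and, since $\mathcal{Z}_{U}$ is lacunary, any sub-product of the factors $U_{l}$ is again a lacunary canonical product. The induction runs along the chain $\{\mathcal{H}(E_{y})\}_{y\in\mathcal{N}}$ downward from the top $y=1$; there $V^{1}\equiv 1$, so $F/V^{1}=F\in\he=\mathcal{H}(E_{1})$, which is the base case.

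First I would settle the successor step. Let $y\in\mathcal{N}$ have an immediate upper neighbour $y^{*}\in\mathcal{N}$, so that $(y,y^{*})\cap\mathcal{N}=\emptyset$ and, by the de Branges Ordering Theorem, $\mathcal{H}(E_{y})$ is the unique de Branges subspace of codimension $1$ in $\mathcal{H}(E_{y^{*}})$; assume $P(y^{*})$, i.e. $G:=F/V^{y^{*}}\in\mathcal{H}(E_{y^{*}})$. Since $y^{*}$ is the only index in $(y,y^{*}]$, we have $V^{y}=U_{y^{*}}V^{y^{*}}$, hence $F/V^{y}=G/U_{y^{*}}$. Pick a zero $\lambda$ of $U_{y^{*}}$; as $\lambda\in\mathcal{Z}_{U}\subset\mathcal{Z}_{F}$ and $\lambda$ is not a zero of $V^{y^{*}}$, the function $G$ vanishes at $\lambda$, so Proposition \ref{fDiv} yields $G/(z-\lambda)\in\mathcal{H}(E_{y})$. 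Writing $U_{y^{*}}=(1-z/\lambda)W$ with $W$ the lacunary product of the remaining zeros of $U_{y^{*}}$, we get $F/V^{y}=\mathrm{const}\cdot(G/(z-\lambda))/W$; since $F/V^{y}\in\he$ (a divisor of $U$), Proposition \ref{Udiv} applied inside $\mathcal{H}(E_{y})$ gives $F/V^{y}\in\mathcal{H}(E_{y})$, which is $P(y)$. Thus one codimension-$1$ descent is realized by peeling off a single zero via Proposition \ref{fDiv} and absorbing the rest of $U_{y^{*}}$ by Proposition \ref{Udiv}.

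The main obstacle is the limit step, needed at those $y\in\mathcal{N}$ that are approached from above by a sequence $y_{k}\downarrow y$ in $\mathcal{N}$. Assuming $P(y_{k})$ for all $k$, write $V^{y}=V^{y_{k}}W_{k}$ with $W_{k}=\prod_{y<l\le y_{k}}U_{l}$ lacunary; then $F/V^{y}=(F/V^{y_{k}})/W_{k}$ with $F/V^{y_{k}}\in\mathcal{H}(E_{y_{k}})$ and $F/V^{y}\in\he$, so Proposition \ref{Udiv} places $F/V^{y}\in\mathcal{H}(E_{y_{k}})$ for every $k$. To pass to the limit I would invoke the left-continuity of the chain, $\mathcal{H}(E_{y})=\bigcap_{k}\mathcal{H}(E_{y_{k}})$ for such a non-isolated $y$, a property of de Branges chains that follows from the Ordering Theorem. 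This identification, together with the verification that the downward induction actually exhausts $\mathcal{N}$ (here the already established fact that every element of $\mathcal{N}$ has an immediate lower neighbour, so that no element of $\mathcal{N}$ is approached from below by other elements of $\mathcal{N}$, is what keeps the induction well founded), is the delicate point of the argument; everything else is a bookkeeping of divisors.

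Finally, the lemma for an arbitrary $x\in(0,1]$ and any $y\in\mathcal{N}$ with $y\ge x$ follows at once from $P(y)$: writing $V^{x}=V^{y}W$ with $W=\prod_{x<l\le y}U_{l}$ lacunary, we have $F/V^{x}=(F/V^{y})/W$, where $F/V^{y}\in\mathcal{H}(E_{y})$ by $P(y)$ and $F/V^{x}\in\he$, so one more application of Proposition \ref{Udiv} gives $F/V^{x}\in\mathcal{H}(E_{y})$. This completes the plan.
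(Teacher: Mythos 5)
Your overall architecture is essentially the paper's, reorganized. Both arguments run on the same three ingredients: Proposition \ref{Udiv} for peeling off lacunary factors, Proposition \ref{fDiv} for the codimension-one descent, and the intersection identity $\mathcal{H}(E_y)=\bigcap_k\mathcal{H}(E_{y_k})$ at points of $\mathcal{N}$ approached from above (which you call left-continuity and the paper asserts equally briefly). The difference is the induction scheme: the paper performs a ``continuous induction'' over the whole interval, setting $L=\{x\in(0,1]: F/V^x\in\mathcal{H}(E_y)\ \forall y\geq x,\ y\in\mathcal{N}\}$ and ruling out $\inf L>0$ via the two cases $\inf L\in L$, $\inf L\notin L$ --- which are precisely your successor and limit steps --- while you run a downward transfinite induction over $\mathcal{N}$ itself. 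Your successor step is actually more carefully argued than the paper's one-line appeal to Proposition \ref{Udiv}: that proposition alone does not place $G/U_{y^*}$ in the codimension-one subspace, and your detour through a single zero $\lambda$ via Proposition \ref{fDiv}, followed by division by the remaining lacunary factor $W$, is the correct way to fill this in. The final reduction of the lemma for arbitrary $x\in(0,1]$ to the statement $P(y)$ is also fine.

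The genuine gap is in your justification of well-foundedness. You claim: every element of $\mathcal{N}$ has an immediate lower neighbour, hence no element of $\mathcal{N}$ is approached from below within $\mathcal{N}$, hence the downward induction is well founded. The second implication fails for abstract linear orders: take $\mathcal{N}$ of order type $\omega+\omega^*$, say an increasing sequence $c_1<c_2<\cdots$ whose supremum falls in a gap of $\mathcal{N}$, lying below a decreasing sequence $d_1>d_2>\cdots$. Every element except the minimum has an immediate lower neighbour and no element of $\mathcal{N}$ is a limit from below of elements of $\mathcal{N}$, yet each $c_k$ has immediate upper neighbour $c_{k+1}$, so your successor step at $c_k$ presupposes $P(c_{k+1})$ ad infinitum and the induction never reaches any $c_k$; equivalently, the set $\{c_k\}$ has no maximal element, so ``take a maximal counterexample'' breaks down. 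Such configurations are in fact impossible here, but for a reason you do not invoke: the closed linear span of a strictly increasing chain of de Branges subspaces is again a de Branges subspace, hence belongs to the chain $\mathcal{B}$; being infinite-dimensional, it has (by the standing finiteness-of-spectral-measures hypothesis) a de Branges subspace of codimension one, i.e.\ an immediate lower neighbour in $\mathcal{N}$ --- which an element sitting immediately above an increasing sequence with no largest term cannot have. With this supplement every nonempty subset of $\mathcal{N}$ has a maximal element and your induction becomes valid. The paper sidesteps the issue entirely, since $\inf L$ exists by completeness of the reals; this is what its inf-based formulation buys, at the price of a more opaque case analysis.
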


\begin{proof} Put $L=\{x:x \in(0,1], F/V^x(z)\in\mathcal{H}(E_y) \text{ for any } y\geq x,\quad y\in\mathcal{N}\}$ and $x= \inf L$.
Suppose $x>0$. If $x \in L$, then there exists $y<x$, such that
$[y,x] \subset L$. Indeed, we can consider a subspace
$\mathcal{H}(E_y)$ of codimension $1$ in $\mathcal{H}(E_x)$. From
Proposition \ref{Udiv} we conclude that $F/V^y=
F/(V^xU_x)\in \mathcal{H}(E_y)$ and, hence, $y\in L$
which is absurd.

On the other hand, if $x\not\in L$, then by the same arguments 
$f/V^x\in \mathcal{H}(E_t)$ for any $t>x$, $t\in\mathcal{N}$.
We know that there exists sequence $x_n\in L$ such that $x_n\rightarrow x$.
So, $\mathcal{H}(E_x)=\cap_{y>x, y\in\mathcal{N}}\mathcal{H}(E_y)$. We conclude
that $F/V^x \in \mathcal{H}(E_x)$ and, hence, $x\in L$.
\end{proof}

Now we consider the function $F/U$. {Using Proposition
\ref{Udiv} we get 
$$\frac{F}{U},\quad \frac{F}{(z-z_0)U} \in \bigcap_{x>0,x\in\mathcal{N}}\mathcal{H}(E_x).$$
On the other hand, $\dim  \cap_{x>0,x\in\mathcal{N}}\mathcal{H}(E_x)\leq 1$.}
This contradiction proves Theorem \ref{chain}.
\qed


\section{Ordering theorem for the zeros of Cauchy transforms}

We will give two different proofs of Theorem \ref{ordering}. 
In the first proof the de Branges Ordering Theorem
will play a crucial role. The second proof is more elementary.

\subsection{Preliminaries} 
\label{prel}
We will start with the following two lemmas 
which will be of importance for each of the proofs.

\begin{lemma}
\label{A2M}
Let the de Branges space $\he(=A\mathcal{H}(T,\mu))$  have the localization property and let $A=(E+E^*)/2$ be the function
with simple zeros exactly on $T$. If $A=A_1A_2$, where $A_1$ and
$A_2$ are entire and $A_1\in\he$, then $A_2$ is of zero
exponential type and
\begin{equation}
\sum_{t_n
\in\mathcal{Z}_{A_2}}\frac{1}{\mu_n|A_2'(t_n)|^2}<\infty.
\label{A2Meq}
\end{equation}
\end{lemma}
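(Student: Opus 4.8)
The plan is to extract both conclusions from the membership criterion of Theorem \ref{t26} applied to the hypothesis $A_1 \in \he$, doing the summability \eqref{A2Meq} first and the type statement afterwards. Since $A$ has simple zeros exactly on $T$, the factorization $A = A_1A_2$ splits $T$ into the disjoint zero sets $\mathcal{Z}_{A_1}$ and $\mathcal{Z}_{A_2}$. Applying \eqref{inc} to $F = A_1$ gives $\sum_n \frac{|A_1(t_n)|^2}{|A'(t_n)|^2\mu_n} < \infty$. The terms with $t_n \in \mathcal{Z}_{A_1}$ vanish, so only $t_n \in \mathcal{Z}_{A_2}$ contribute. For such $t_n$ we have $A_2(t_n) = 0$ and $A_1(t_n) \neq 0$, whence the product rule gives $A'(t_n) = A_1(t_n)A_2'(t_n)$ and therefore $\frac{|A_1(t_n)|^2}{|A'(t_n)|^2\mu_n} = \frac{1}{|A_2'(t_n)|^2\mu_n}$. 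Summing over $\mathcal{Z}_{A_2}$ yields \eqref{A2Meq} immediately.

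For the exponential type I would first note that $1/A_2 = A_1/A$ is of bounded type in both $\CC^+$ and $\CC^-$ by Theorem \ref{t26}; since $A_2$ is entire, $A_2$ is itself of bounded type in both half-planes, hence of finite exponential type equal to $\max(\tau^+, \tau^-)$, where $\tau^\pm$ denote the mean types of $A_2$ along $\pm i\infty$. From $\frac{\log|A_2(iy)|}{|y|} = \frac{\log|A(iy)|}{|y|} - \frac{\log|A_1(iy)|}{|y|}$ one sees that it suffices to show that $E$ (equivalently $A$) is of zero exponential type: then $\frac{\log|A(iy)|}{|y|} \to 0$, and, using \emph{both} conditions $A_1/E \in H^2(\CC^+)$ and $A_1^*/E \in H^2(\CC^+)$ (which together with the indicator inequality $\tau^+ + \tau^- \geq 0$ pin the mean types of $A_1$ to $0$ on both imaginary rays and in particular exclude a spurious exponential factor), one gets $\frac{\log|A_1(iy)|}{|y|} \to 0$ as well. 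Hence $\tau^\pm = 0$ and $A_2$ is of zero exponential type; combined with \eqref{A2Meq} this also gives the real simple zeros and rapid derivative growth needed for $A_2$ to be a Hamburger class function in the applications.

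The main obstacle is precisely the reduction step: that the localization property forces $E$ to have zero exponential type. The cleanest route is the canonical-system description. By Theorem \ref{chain1} the localization property is equivalent to the Hamiltonian $H$ consisting of indivisible intervals, on which $\det H \equiv 0$; since the exponential type of $E = E_L$ equals $\int_0^L \sqrt{\det H(t)}\,\dint t$, it vanishes. Alternatively, one can try to argue intrinsically: \eqref{A2Meq} together with Proposition \ref{mu_n} forces $|A_2'(t_n)|$ to grow faster than any power of $|t_n|$ along $\mathcal{Z}_{A_2}$, which for a function of bounded type with power-separated real zeros as in \eqref{powsep} is incompatible with positive exponential type. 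Either way, once $E$ (hence $A$) is known to be of zero type, the mean-type computation above closes the argument without any genus or density considerations.
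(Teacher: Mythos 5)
Your summability half is exactly the paper's argument: the paper also applies condition \eqref{inc} of Theorem \ref{t26} to $F=A_1$ and uses that, at a simple zero $t_n\in\mathcal{Z}_{A_2}$ of $A=A_1A_2$, one has $A'(t_n)=A_1(t_n)A_2'(t_n)$, so \eqref{A2Meq} drops out. Likewise, your derivation of finite exponential type (bounded type of the entire function $A_2$ in both half-planes, then Krein) is the same as the paper's first step.

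Where you diverge is the passage from finite to \emph{zero} type, and here the paper is both more elementary and more self-contained. It argues: if $A_2$ had positive type, the Nevanlinna factorization of $A_1/E$ would leave room for a small exponential factor, so $A_1e^{i\eps z}\in\he$ or $A_1e^{-i\eps z}\in\he$ for some small $\eps>0$; then $A_1(e^{i\eps z}-\alpha)\in\he$ for every $\alpha\in\CC$, and the zeros of $e^{i\eps z}-\alpha$ form an arithmetic progression on a horizontal line, contradicting the localization property directly. Your primary route (Theorem \ref{chain1} plus the Krein--de Branges formula $\mathrm{type}(E_L)=\int_0^L\sqrt{\det H(t)}\,\dint t$, with $\det H\equiv 0$ on indivisible intervals) is sound in substance and not circular within the paper, since Theorem \ref{chain}, on which Theorem \ref{chain1} rests, is proved in Section 4 without Lemma \ref{A2M}. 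But it is much heavier than needed: it imports the inverse spectral theorem and a type formula used nowhere in the paper, and --- more importantly --- Theorem \ref{chain1} is stated only for \emph{regular} de Branges spaces, a hypothesis absent from Lemma \ref{A2M}; closedness of $A\mathcal{H}(T,\mu)$ under difference quotients is not formally automatic, so you would owe a verification of regularity before invoking it. Your subsequent mean-type bookkeeping (using $A_1/E,\,A_1^*/E\in H^2$ together with $h_{A_1}(\pi/2)+h_{A_1}(-\pi/2)\ge 0$ to pin the mean types of $A_1$ to zero) is correct once $E$ is known to have zero type.

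The fallback ``intrinsic'' argument, however, is false as stated, and this is a genuine gap if the canonical-system route were disallowed. Superpolynomial growth of $|A_2'(t_n)|$ along a power separated real zero set is perfectly compatible with positive exponential type for a Cartwright-class function: take $A_2(z)=\sin(\pi z)\prod_{k\ge 1}\bigl(1-z/u_k\bigr)$ with $u_k=10^k+\frac12$. Its zeros $\ZZ\cup\{u_k\}$ are real, simple and separated by at least $1/2$; at $n\in\ZZ$ one has $|A_2'(n)|=\pi\prod_k|1-n/u_k|$, which grows like $e^{c(\log |n|)^2}$, and similarly $|A_2'(u_k)|$ grows superpolynomially, while the type is $\pi$. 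One can even choose an admissible measure realizing all your quantitative hypotheses (say $\mu_n=t_n^2|A_2'(t_n)|^{-2}$, which makes \eqref{A2Meq} hold and $\mu_n$ decay faster than any power, as in Proposition \ref{mu_n}), so \eqref{A2Meq} plus power separation plus bounded type simply cannot force zero type. Some genuine use of the localization property at this point --- the paper's $A_1(e^{i\eps z}-\alpha)$ trick, or your canonical-system detour --- is unavoidable.
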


Note that we have $\mu_n=o(|t_n|^{-N})$,
whence $|t_n|^N=o(|A'_2(t_n)|)$ for any $N>0$. Thus, $A_2$ belongs
to the usual Hamburger class as defined in \cite{BS1}

Recall that a function $f$ is said to be {\it of bounded type} 
in the upper half-plane $\mathbb{C}^+$, if
$f =g/h$, where $g,h$ are bounded analytic functions in $\mathbb{C}^+$
(or functions in $H^p$). If, in addition, $h$ is outer, then $f$ is said to be in {\it the Smirnov class} $\mathcal{N}_+(\mathbb{C}^+)$.
\medskip
\\
{\bf Proof of Lemma \ref{A2M}.} 
Since $A_1\in\he$ and $\frac{A(z)}{z-t}\in\he$ for any $t\in T$,
the functions $A_1/ E$ and $A/ E$ belong to the Smirnov
class $\mathcal{N}_+(\mathbb{C}^+)$,
whence $A_2$ is of  bounded type in $\mathbb{C}^+$. Analogously (passing
to the conjugate functions) $A_2$  is of bounded type in $\mathbb{C}^-$
and so, by a theorem of M.G. Krein 
(see, e.g., \cite[Chapter I, Section 6]{hj}) $A_2$ is of finite
exponential type.
If the exponential type of $A_2$ is positive, then it follows from the Nevanlinna factorization of ${A_1}\slash{E}$ that for sufficiently small $\varepsilon>0$ either $A_1e^{i\varepsilon z}\in\he$ or $A_1e^{-i\varepsilon z}\in\he$. Assume  that $A_1e^{i\varepsilon z}\in\he$. Hence,  $A_1(e^{i\varepsilon z}-\alpha)\in\he$
for any $\alpha\in\mathbb{C}$. This contradicts the localization property of $\he$. So, $A_2$ is of zero exponential type.

Recall that if $F\in\he$ then
$$
\sum_{t_n\in T}\frac{|F(t_n)|^2}{\mu_n|A'(t_n)|^2}<\infty
$$
(see Theorem \ref{t26}). Applying this to $F=A_1$ we get \eqref{A2Meq}. 
\qed

Let us show that it suffices to prove the ordering theorem 
for functions with zeros in the set $T$ only.

\begin{lemma}
\label{TfLemma}
Let $f\in\he$, $f\neq0$, and let $T_f$ be defined as in Subsection
\ref{AS}. Then there exists a function $A_f\in\he$ which vanishes
exactly  on $T_f$ up to a finite set. 
\end{lemma}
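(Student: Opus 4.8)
The plan is to obtain $A_f$ from $f$ by shifting each localized zero onto the point of $T_f$ it sits near. By the localization property and the discussion in Subsection~\ref{AS}, we may fix $M>0$ as large as we like and, for all but finitely many $t_n\in T_f$, select the unique zero $z_n\in\mathcal{Z}_f$ with $|z_n-t_n|\le|t_n|^{-M}$; by property (iv) of Theorem~\ref{local} all but finitely many of these $z_n$ are simple. I would then set
$$
A_f(z)=f(z)\prod_{t_n\in T_f}\frac{z-t_n}{z-z_n},
$$
the product taken over all but finitely many $t_n\in T_f$. Since $|z_n-t_n|\le|t_n|^{-M}$ and $|t_n|\gtrsim|n|^\rho$, choosing $M$ large gives $\sum_n|z_n-t_n|<\infty$, so the factor $P(z):=\prod(z-t_n)/(z-z_n)$ converges locally uniformly off $\{z_n\}$; as $f$ vanishes at each $z_n$, the poles of $P$ cancel and $A_f$ is entire.

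First I would verify that $\mathcal{Z}_{A_f}=T_f$ up to a finite set. The factor $P$ deletes the zero $z_n$ of $f$ and installs a simple zero at $t_n$, so $A_f(t_n)=0$ for all but finitely many $t_n\in T_f$. For $t_n\notin T_f$ the disc $D(t_n,|t_n|^{-M})$ contains no zero of $f$ apart from finitely many exceptional indices, hence $f(t_n)\ne0$, and combined with the bound $|P(t_n)|\asymp1$ established below this yields $A_f(t_n)\ne0$. The only other zeros of $A_f$ are the finitely many non-localized zeros of $f$, so $A_f$ vanishes exactly on $T_f$ up to a finite set.

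It then remains to check $A_f\in\he$ through Theorem~\ref{t26} for $\he=A\mathcal{H}(T,\mu)$. Writing $A_f/A=(f/A)\,P$, the quotient $f/A$ is of bounded type in $\CC^+$ and $\CC^-$ and satisfies $\liminf_{|y|\to\infty}|f(iy)/A(iy)|=0$ because $f\in\he$. For $|\im z|\ge1$ one has $|z-z_n|\ge1/2$ for all but finitely many $n$, whence $|\log(1+(z_n-t_n)/(z-z_n))|\lesssim|z_n-t_n|$; thus $P$ is bounded on $\{|\im z|\ge1\}$ and $P(iy)\to1$, and the standard product estimates (as in Section~\ref{locsec}) show that $P$ is of bounded type in both half-planes. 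Consequently $A_f/A$ is of bounded type and \eqref{FiyA} holds. For \eqref{inc} I would split $\sum_n|A_f(t_n)|^2/(|A'(t_n)|^2\mu_n)$: the terms with $t_n\in T_f$ vanish up to finitely many, while for $t_n\notin T_f$ we have $|A_f(t_n)|\asymp|f(t_n)|$, so the sum is controlled by $\sum_n|f(t_n)|^2/(|A'(t_n)|^2\mu_n)<\infty$, which converges since $f\in\he$.

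The crux of the argument is the uniform two-sided bound $|P(t_n)|\asymp1$ for $t_n\notin T_f$. Here $\log|P(t_n)|=\sum_{t_m\in T_f}\log|1+(z_m-t_m)/(t_n-z_m)|$, and the power separation \eqref{powsep} guarantees $|t_n-z_m|\ge|t_n-t_m|-|z_m-t_m|\gtrsim|t_m|^{-N}$ once $M>N$, so each summand is $O(|t_m|^{-(M-N)})$ and, for $M$ large, the total is uniformly bounded above and below. Controlling this near-diagonal interaction between $T_f$ and $T\setminus T_f$ is the main obstacle; once it is in hand, the membership $A_f\in\he$ follows from the routine application of Theorem~\ref{t26} sketched above.
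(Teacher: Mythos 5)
Your proof is correct and essentially identical to the paper's: both define $A_f(z)=f(z)\prod_{t_n\in T_f}\frac{z-t_n}{z-z_n}$ by sliding each localized zero $z_n$ onto the nearby point $t_n\in T_f$, and both deduce $A_f\in\he$ from Theorem~\ref{t26} via $|A_f(iy)|\asymp|f(iy)|$ and $|A_f(t_n)|\asymp|f(t_n)|$ for $t_n\in T\setminus T_f$. Your explicit near-diagonal estimate (power separation giving $|t_n-z_m|\gtrsim |t_m|^{-N}$, hence summands $O(|t_m|^{N-M})$ for $M$ large) merely fills in the step the paper dismisses as ``it is easy to see.''
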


\begin{proof}
Let $z_n$ be a zero of $f$ closest to the point $t_n\in T_f$.
Since $T_f$ is defined up to finite sets, we may 
assume without loss of generality  that this is a one-to-one
correspondence between $\mathcal{Z}_f$ and $T_f$. Put
$$
A_f(z)=f(z)\prod_{t_n\in T_f}\frac{z-t_n}{z-z_n}.
$$

Since we have $|z_n-t_n|\leq|t_n|^{-M}$ with $M$ much larger than
$N$ from the power separation condition \eqref{powsep}, 
it is easy to see that $|A_f(iy)|\asymp|f(iy)|$, $|y|\rightarrow\infty$, and
$$|A_f(t_n)|\asymp|f(t_n)|,\qquad t_n\in T\setminus T_f.$$ Hence,
$A_f\in\he$ by Theorem \ref{t26}.
\end{proof}


\subsection{First proof of Theorem \ref{ordering}}
{The following lemma will play a crucial role in the first
proof of Theorem \ref{ordering}.

\begin{lemma}
\label{subconstr}
Let $\he$ be a de Branges space which has the localization property.
If $f\in\he$ is real on the real line and has only real simple
zeros, then
\begin{equation}
\mathcal{F}:=\closspan_{\he}\biggl{\{}\frac{f(z)}
{z-\lambda}\biggr{\}}_{\lambda\in\mathcal{Z}_f}
\label{subsp}
\end{equation}
is a de Branges subspace of $\he$ \textup(that is,
$\mathcal{F}=\mathcal{H}(\tilde{E})$\textup) and $T_f=T_{\tilde{A}}$,
where $\tilde{A}$ is the corresponding $A$-function of the space
$\mathcal{H}(\tilde{E})$. 
\end{lemma}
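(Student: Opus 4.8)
The plan is to establish the two assertions in turn: first that $\mathcal F$ is a de Branges subspace, producing $\tilde E$ with $\mathcal F=\mathcal H(\tilde E)=\tilde A\,\mathcal H(\tilde T,\tilde\mu)$, and then that the spectral support $\tilde T=\mathcal Z_{\tilde A}$ localizes to the same subset of $T$ as $\mathcal Z_f$, which is the meaning of $T_f=T_{\tilde A}$. As a preliminary, note that every generator lies in $\he$: since $f\in\he$ and $f(\lambda)=0$ for $\lambda\in\mathcal Z_f$, the division property gives $\frac{f(z)}{z-\lambda}\in\he$, so $\mathcal F$ is a genuine closed subspace.

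For the first assertion I would show that $\mathcal F$ is invariant under the difference quotient $R_w F(z)=\frac{F(z)-F(w)}{z-w}$ for every non-real $w$, and then invoke the de Branges theory of subspaces, by which a closed difference-quotient-invariant subspace of a de Branges space is again a de Branges space $\mathcal H(\tilde E)$ (cf. \cite{br}). Since $R_w$ is bounded on $\he$ for $w\notin\RR$, it suffices to verify invariance on a finite combination $g=f\,C$ of generators, where $C(z)=\sum_\lambda \frac{c_\lambda}{z-\lambda}$ is a finite Cauchy sum over $\lambda\in\mathcal Z_f$. The key identity is
\[
R_w(fC)=f\cdot R_w C + C(w)\,R_w f .
\]
Now $R_w\bigl(\tfrac1{z-\lambda}\bigr)=\tfrac{-1}{(w-\lambda)(z-\lambda)}$, so $R_wC$ is again a Cauchy transform supported on $\mathcal Z_f$ and $f\cdot R_wC$ is a combination of generators; moreover $R_wf$ is entire and its Mittag--Leffler expansion over the simple poles of $1/f$ reads $R_wf=-f(w)\sum_{\lambda\in\mathcal Z_f}\frac{1}{(\lambda-w)f'(\lambda)}\cdot\frac{f(z)}{z-\lambda}$, again a series in the generators, whose convergence in $\he$ is controlled by Theorem \ref{t26}. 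Hence $R_w(fC)\in\mathcal F$, and passing to the closure gives $R_w\mathcal F\subset\mathcal F$; therefore $\mathcal F=\mathcal H(\tilde E)=\tilde A\,\mathcal H(\tilde T,\tilde\mu)$ with $\tilde A=(\tilde E+\tilde E^*)/2$ and $\tilde T=\mathcal Z_{\tilde A}$.

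To prove $T_f=T_{\tilde A}$, I would first record that both $f/\tilde A$ and $\tilde A/f$ are of bounded type in $\CC^+$ and $\CC^-$. Indeed $f\in\he$ gives $f/A$ of bounded type by Theorem \ref{t26}, while the reproducing kernels $\tfrac{\tilde A(z)}{z-s}$, $s\in\tilde T$, lie in $\mathcal F\subset\he$ and give $\tilde A/A$ of bounded type; since ratios of bounded type functions are of bounded type, both $f/\tilde A$ and $\tilde A/f$ follow. By the localization of $\he$ and Lemma \ref{A2M} both $f$ and $\tilde A$ are of zero exponential type with power-separated zero sets localizing, off a finite set, into the shrinking disks $D(t_n,|t_n|^{-M})$ (localization property, Theorem \ref{local}). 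A real meromorphic function that is of bounded type in both half-planes together with its reciprocal, and whose zeros $\mathcal Z_f$ and poles $\tilde T$ are both power-separated and thus localized near $T$, can have only finitely many ``unmatched'' zeros or poles: an infinite family of zeros near $T$-points not cancelled by nearby poles would violate the bounded type of the reciprocal. This yields a bijection of $\mathcal Z_f$ with $\tilde T$ attaching to the same points of $T$ apart from finitely many indices, i.e. $T_f=T_{\tilde A}$ up to a finite set.

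The main obstacle is precisely this last matching step: the $R_w$-invariance argument hands us $\mathcal F=\mathcal H(\tilde E)$ abstractly but yields no direct grip on the zeros of $\tilde A$. I expect the bounded-type comparison of $f$ and $\tilde A$, together with the power separation of $T$, to be the crux, and I would make it rigorous by identifying $\mathcal F$, as a set, with $f$ times the Cauchy transforms supported on $\mathcal Z_f$ (so that $\mathcal Z_f$ plays the role of the spectral support directly) and then using Lemma \ref{A2M} applied to both $f$ and $\tilde A$ to force the two zero sets to accumulate at the same subset of $T$; the localization property of the subspace $\mathcal F$ inherited from $\he$ then upgrades this to the equality of attraction sets recorded in Subsection \ref{AS}.
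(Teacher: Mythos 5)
Your argument fails at precisely the point you flag as the crux, the inclusion $T_{\tilde{A}}\subset T_f$, and the principle you invoke there is false. You claim that a function real on $\RR$, of bounded type in $\CC^+$ and $\CC^-$ together with its reciprocal, with power separated zeros and poles, ``can have only finitely many unmatched zeros or poles.'' Take $f(z)=\sin\pi z$ and $\tilde{A}(z)=\sin(\pi z/2)$: the ratio $f/\tilde{A}=2\cos(\pi z/2)$ and its reciprocal are both of bounded type in both half-planes (they are in the Cartwright class, resp.\ a meromorphic ratio of bounded functions), real on $\RR$, with power separated zero and pole sets, yet every odd integer is an unmatched zero. Bounded type constrains zeros only through a Blaschke/Cartwright-type condition and imposes no pairing whatsoever between $\mathcal{Z}_f$ and $\mathcal{Z}_{\tilde{A}}$, so nothing in your argument excludes that $\tilde{A}$ has infinitely many zeros near points of $T$ carrying no zeros of $f$ --- which is exactly the hard case. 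Your fallback, identifying $\mathcal{F}$ as a set with $f$ times the Cauchy transforms supported on $\mathcal{Z}_f$, is circular: that identification is essentially equivalent to $T_{\tilde{A}}=T_f$ itself. The paper's mechanism here is entirely different and is the real content of the lemma: assuming $T_{\tilde{A}}\setminus T_f$ is infinite, factor $\tilde{A}=\tilde{A}_1\tilde{A}_2$ with $\tilde{A}_1\in\mathcal{F}$ and $\tilde{A}_2$ having infinitely many zeros, and construct a nonzero $h\in\mathcal{F}$ orthogonal to all generators $f(z)/(z-\lambda)$, contradicting the definition of $\mathcal{F}$ as their closed span. The vector $h$ is expanded in the orthogonal reproducing kernels of $\mathcal{F}$ at its spectral points; orthogonality reduces to the interpolation formula \eqref{finterp} with an entire parameter $S$, one may take $S\equiv 1$ because the spectral measure of $\mathcal{F}$ is finite (Theorem \ref{chain}), and the validity of \eqref{finterp} --- no extra entire term --- is forced by a growth comparison on $i\RR$: $|\tilde{A}_2(iy)|\to\infty$ since $\tilde{A}_2$ is of Hamburger class by Lemma \ref{A2M}, while $f/\tilde{A}_1$ is bounded on $i\RR$. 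None of these ingredients appears in your proposal.

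The first half of your proof also has gaps, though this is the easier part (the paper disposes of it by checking the axioms of \cite[Theorem 23]{br} directly: point evaluations are bounded on a closed subspace, the generators are real on $\RR$ so $*$-invariance holds, and multiplication by $(z-\bar w)/(z-w)$ is isometric on $\he$ and carries finite Cauchy sums vanishing at $w$ into finite Cauchy sums). Your route assumes that $R_w$ maps $\he$ into itself boundedly for nonreal $w$; this is not automatic for a general de Branges space --- closedness under difference quotients is the regularity condition, which the paper introduces as a separate hypothesis in Section 1 precisely because it can fail, and the present lemma does not assume it. Moreover, your Mittag--Leffler expansion of $R_wf$ over the generators needs two things that Theorem \ref{t26} does not supply: norm convergence of a series indexed by $\mathcal{Z}_f$ (Theorem \ref{t26} is a one-function membership test with sums over $T$, not over $\mathcal{Z}_f$, and $\mathcal{Z}_f$ is not yet known to be a spectral support), and the absence of an extra entire addend in the partial fraction expansion of $1/(f(z)(z-w))$, which again requires a growth argument of exactly the type the paper carries out for \eqref{finterp}. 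Your derivation of $T_f\subset T_{\tilde{A}}$ from the localization property inherited by $\mathcal{F}$ is fine, but the reverse inclusion is where the lemma lives, and there your proof needs a new idea.
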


\begin{proof}
It is easy to check that $\mathcal{F}$ satisfies all axioms of de
Branges spaces (see \cite[Theorem 23]{br}). So, $\mathcal{F}$ is a de Branges space
and has localization property. Hence, $T_f\subset T_{\tilde{A}}$.
It remains to show that $T_{\tilde{A}}\subset T_f$ (we remind that
$T_f$ and $T_{\tilde{A}}$ are defined up to finite sets). 

Assume the
converse. This means that there exists a factorization of
$\tilde{A}$, $\tilde{A}=\tilde{A}_1\tilde{A}_2$, such that
$\tilde{A}_1\in\mathcal{F}$, the function
$\tilde{A}_2$ has infinite number of zeros, 
and the zeros of $f$ are localized exactly near
$\mathcal{Z}_{\tilde{A}}$ (i.e., for any $M>0$,
we have $|z_n-\tilde t_n|<|\tilde t_n|^{-M}$ 
for sufficiently big $n$, where $z_n$
and $\tilde t_n$ are the zeros of $f$ and $\tilde{A}$ respectively). Without loss of
generality we can assume that $\mathcal{Z}_f\cup
\mathcal{Z}_{\tilde{A}}=\emptyset$. 

Now we want to construct
a nonzero function $h\in\mathcal{F}$ such that 
$h\perp \big\{ \frac{f(z)}{z-\lambda}\big\}_{\lambda\in\mathcal{Z}_f}$.
This will give us a contradiction.
We use an idea of the construction of such vector $h$ which 
goes back to \cite{bb}.

Let
$\tilde{\mu}:=\sum_{\tilde t_n \in\mathcal{Z}_{\tilde{A}}}
\tilde \mu_n\delta_{\tilde t_n}$
be the spectral measure of $\mathcal{F}$ and let $\tilde k_n$ be the 
reproducing kernel of the space $\mathcal{F}$ at the point $\tilde t_n$.
So,
\begin{equation}
\frac{\tilde k_n(z)}{\|\tilde k_n\|_{\mathcal{F}}}=
\frac{\|\tilde k_n\|_{\mathcal{F}}}{\tilde{A}'(\tilde t_n)}
\cdot\frac{\tilde{A}(z)}{z-\tilde t_n}.
\label{ort}
\end{equation}
It is well known that the system $\{\tilde k_n\}$ is an orthogonal system
in $\mathcal{F} = \mathcal{H}(\tilde{E})$ \cite[Theorem 22]{br}. Put
$$
h(z)=\sum_nh_n\frac{k_n(z)}{\|\tilde k_n\|_{\mathcal{F}}},
\qquad \{h_n\}\in\ell^2.
$$
Equation $(h, f(z)/(z-\lambda))_{\mathcal{F}}=0$,
$\lambda\in\mathcal{Z}_f$, 
is equivalent to
$$
\sum_n\frac{h_nf(t_n)}{\|\tilde k_n\|_{\mathcal{F}}(\lambda-t_n)}=0,\qquad \lambda\in\mathcal{Z}_f.
$$
This is equivalent to the interpolation formula
\begin{equation}
\sum_n\frac{h_nf(t_n)}{\|\tilde k_n\|_{\mathcal{F}} (z-t_n)}=
\frac{f(z)S(z)}{\tilde{A}(z)},
\label{finterp}
\end{equation}
where $S$ is some entire function. From equation \eqref{ort}
we conclude that
$\tilde \mu_n^{1/2} = \|\tilde k_n\|_{\mathcal{F}}/|\tilde{A}'(\tilde t_n)|$. 
So, the inclusion $\{h_n\} \in \ell^2$ is equivalent to
$\sum_n|S(\tilde t_n)|^2\mu_n<\infty$. From Theorem \ref{chain} we know
that the spectral measure $\tilde{\mu}$ is finite and, so, we can take
$S\equiv1$. It remains to show that the interpolation formula
\eqref{finterp} holds.

Clearly, the difference between the right-hand side and the left-hand side in 
\eqref{finterp} is an entire function of zero exponential type. 
It remains to show that this difference tends to 0 along the imaginary axis.
By Lemma \ref{A2M}, $\tilde{A}_2$ is a function of
zero exponential type and of Hamburger class. Hence, 
$|\tilde{A}_2(iy)|\to \infty$ as $|y|\to\infty$. On the other
hand, $\sup_y|f(iy)|/|\tilde{A}_1(iy)|<\infty$. Hence, we
have the interpolation formula for $f/\tilde{A}$.
\end{proof}
\medskip
\noindent
{\bf End of the first proof of Theorem \ref{ordering}.}
As usual, we translate the problem to the equivalent 
localization problem in the associated de Branges space $\ho = A\ho(T, \mu)$.
By Lemma \ref{TfLemma}, we may assume that $f$ and $g$ have
only real zeros. Let us consider two subspaces of $\ho$
defined by 
$$
\mathcal{F}_1:=\closspan_{\ho}\biggl{\{}
\frac{f(z)}{z-\lambda}\biggr{\}}_{\lambda\in\mathcal{Z}_f},
\qquad
\mathcal{F}_2:=\closspan_{\ho}
\biggl{\{}\frac{g(z)}{z-\lambda}\biggr{\}}_{\lambda\in\mathcal{Z}_g}.
$$
By Lemma \ref{subconstr} these are de Branges subspaces of $\ho$ and, by 
de Branges' Ordering Theorem (see \cite[Theorem 35]{br})
we conclude that either $\mathcal{F}_1\subset\mathcal{F}_2$ or
$\mathcal{F}_2\subset\mathcal{F}_1$. Assume that
$\mathcal{F}_2\subset\mathcal{F}_1$. The de Branges subspace
$\mathcal{F}_1$ has the localization property. From the inclusion
$g\in\mathcal{F}_1$ we conclude that zeros of $g$ are localized
near the $\mathcal{Z}_{\tilde{A}}$, where $\tilde{A}$ is
the  corresponding $A$-function of the space $\mathcal{F}_1$. Using 
Lemma \ref{subconstr} we get that $T_g\subset T_f$. If
$\mathcal{F}_1\subset\mathcal{F}_2$, we get
$T_f\subset T_g$ by the same arguments. This completes the first proof of Theorem
\ref{ordering}.}
\qed


\subsection{Second proof of Theorem \ref{ordering}}.
By Lemma \ref{TfLemma} we may assume, in what follows, that
$f=A_1$, $g=\tilde{A_1}$, where $\mathcal{Z}_{A_1},
\mathcal{Z}_{\tilde{A_1}}\subset T$. Thus, we may write
$A=A_1A_2=\tilde{A_1}\tilde{A_2}$ for some entire functions $A_2$
and $\tilde{A_2}$.

Let $A_1=BA_0$, $\tilde{A_1}=\tilde{B}A_0$, where $B$ and
$\tilde{B}$ have no common zeros. To prove Theorem \ref{ordering},
we need to show that either $B$ or $\tilde{B}$ has finite number
of zeros.

{ The following proposition will play a crucial role in the second
proof of Theorem \ref{ordering}.  
It seems to be of certain independent interest.
For the definition of the Smirnov class $\mathcal{N}_+(\mathbb{C}^+)$
see Subsection \ref{prel}.

\begin{proposition}
Let $B$ and $\tilde B$ be finite order 
entire functions, which are real on $\RR$, have only real 
and simple zeros such that both $\frac{B}{\tilde{B}}$
and $\frac{\tilde{B}}{B}$ belong to $\mathcal{N}_+(\mathbb{C}^+)$.
If $\mathcal{Z}_B\cup\mathcal{Z}_{\tilde{B}}$ is a power separated set, 
then for some $M>0$ at least one of the following two statements holds: 

\begin{enumerate}
\begin{item}
there exists a subsequence $\{t_{n_k}\}\subset \mathcal{Z}_B$ such
that $|B'(t_{n_k})|\leq 4|t_{n_k}|^M|\tilde{B}(t_{n_k})|$
\end{item}

\begin{item}
there exists a subsequence
$\{t_{n_k}\}\subset\mathcal{Z}_{\tilde{B}}$ such that
$|\tilde{B}'(t_{n_k})|\leq 4|t_{n_k}|^M|B(t_{n_k})|$.
\end{item}

\end{enumerate}
\label{ordlemma}
\end{proposition}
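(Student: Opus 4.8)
The plan is to transfer everything to the single meromorphic function $\phi = B/\tilde B$. The hypotheses that $\phi$ and $1/\phi$ lie in $\mathcal{N}_+(\mathbb{C}^+)$ say precisely that $\phi$ is outer of mean type zero, so that $\log|\phi|\in L^1(dt/(1+t^2))$ and $\phi$ carries no hidden singular or exponential factor; moreover $\phi$ is real on $\RR$, its zeros are $\mathcal{Z}_B$ and its poles are $\mathcal{Z}_{\tilde B}$ (I may assume $B$ and $\tilde B$ have no common zeros, since at a common zero the simplicity of zeros makes both alternatives fail automatically). In these terms $|\phi'(t)| = |B'(t)|/|\tilde B(t)|$ at a zero $t$ and $|(1/\phi)'(s)| = |\tilde B'(s)|/|B(s)|$ at a pole $s$, so alternative (i) asserts that $|\phi'|$ is polynomially bounded along a subsequence of $\mathcal{Z}_B$, and (ii) the same for $|(1/\phi)'|$ along $\mathcal{Z}_{\tilde B}$. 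I argue by contradiction: if for \emph{every} $M$ both alternatives fail, then $\log|\phi'(t)|/\log|t|\to\infty$ over $\mathcal{Z}_B$ and $\log|(1/\phi)'(s)|/\log|s|\to\infty$ over $\mathcal{Z}_{\tilde B}$ (super-polynomial growth of the two derivative quantities).

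The heart of the argument is an \emph{exact identity} at an adjacent zero–pole pair. Since both $\mathcal{Z}_B$ and $\mathcal{Z}_{\tilde B}$ are infinite and interlace on $\RR$, there are infinitely many pairs $a\in\mathcal{Z}_B$, $b\in\mathcal{Z}_{\tilde B}$ with no point of $\mathcal{Z}_B\cup\mathcal{Z}_{\tilde B}$ strictly between them. For such a pair I desingularize: the function $h(z)=\phi(z)\,\dfrac{z-b}{z-a}$ is again outer, real, and analytic and zero-free across $[a,b]$, with $|h(a)| = (b-a)\,|\phi'(a)|$ and $|h(b)| = \bigl((b-a)\,|(1/\phi)'(b)|\bigr)^{-1}$. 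Integrating the (real) logarithmic derivative $h'/h = B'/B - \tilde B'/\tilde B - (z-a)^{-1} + (z-b)^{-1}$ across the gap yields
$$
\log|\phi'(a)| + \log\bigl|(1/\phi)'(b)\bigr| = -2\log(b-a) + \sum_{a_j\in\mathcal{Z}_B\setminus\{a\}}\log\left|\frac{a_j-a}{a_j-b}\right| - \sum_{s_m\in\mathcal{Z}_{\tilde B}\setminus\{b\}}\log\left|\frac{s_m-a}{s_m-b}\right|.
$$
By the contradiction hypothesis the left-hand side is super-polynomially large; the strategy is to show the right-hand side is only $O(\log|a|)$, which is impossible.

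To bound the right-hand side I would use power separation decisively. The gap term satisfies $-2\log(b-a) = O(\log|a|)$ because $b-a\gtrsim|a|^{-N}$ (and the gap is at most polynomial for the pairs one selects). In each interaction sum the nearby points contribute $O(1)$ each, while power separation limits their number within a unit interval to $O(|a|^{N})$ and the weights are $\log\bigl|\tfrac{c-a}{c-b}\bigr|\lesssim (b-a)/|c-a|$; the far tail is summed using the finite order of $B$ and $\tilde B$. The upshot is a \emph{fixed} constant $C=C(N,\rho)$ with right-hand side $\le C\log|a|$; choosing a pair with $a$ so large that $\log|\phi'(a)| > (C+1)\log|a|$ (possible by super-polynomial growth) gives the contradiction. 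The point of principle is that power separation rules out exactly the Koosis-type configuration of exponentially close zero–pole pairs that would make both alternatives fail simultaneously.

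The main obstacle is the estimate of the two interaction sums in the last step. It is delicate for three reasons: one must make the logarithmic-derivative expansion rigorous when $B,\tilde B$ have order $\ge 1$ (using the genuine Poisson/Blaschke representation of the outer function rather than naive Hadamard products), one must control the accumulation of zeros and poles just outside the gap purely through power separation, and one must guarantee that infinitely many adjacent zero–pole pairs have polynomially bounded gap (with the wild-gap configurations disposed of by a separate, more elementary argument, since there the $-2\log(b-a)$ term already dominates). I expect this quantitative balancing of the interaction sum against the gap term, honestly using power separation, to be where essentially all the work lies.
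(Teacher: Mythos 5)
Your reduction to $\phi=B/\tilde B$ and the exact pair identity are fine as far as they go (and the pigeonhole step from a bound on the product $|\phi'(a)|\,|(1/\phi)'(b)|$ to alternatives (i)/(ii) is sound), but the heart of your argument --- the claim that the right-hand side of the identity is $\leq C\log|a|$ for a \emph{fixed} $C$ along infinitely many adjacent zero--pole pairs --- is exactly what you cannot get from local estimates, and your sketch of it fails. Note first the precise quantitative target: the negation of (i) and (ii) only gives $\log\bigl(|B'(t_n)|/|\tilde B(t_n)|\bigr)\geq M\log|t_n|$ eventually, for each fixed $M$; this is super-logarithmic, not super-polynomial, so \emph{any} error term on the right-hand side of polynomial size in $|a|$ destroys the contradiction. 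Two such terms are unavoidable in your scheme. First, power separation permits as many as $\sim|t|^{N}$ points of $\mathcal{Z}_B\cup\mathcal{Z}_{\tilde B}$ in a unit interval; if the gap satisfies $b-a\asymp 1$ and $\sim|a|^{N}$ zeros of $B$ sit just to the right of $b$ at the minimal allowed spacings $k\,|a|^{-N}$, the interaction sum contributes $\sum_{k\leq|a|^{N}}\log\bigl(|a|^{N}/k\bigr)\asymp|a|^{N}$, which is polynomially large, while the compensating term $-2\log(b-a)$ is only logarithmic --- so your remark that for wild gaps ``the $-2\log(b-a)$ term already dominates'' is false, and nothing in the hypothesis lets you select pairs avoiding such clusters (the negation hypothesis constrains every point of $\mathcal{Z}_B$, but gives no geometric control of the configuration). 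Second, for order $\geq 1$ the expansion $h'/h=B'/B-\tilde B'/\tilde B-(z-a)^{-1}+(z-b)^{-1}$ integrated termwise as you wrote it is only valid at genus zero; the genus corrections and the $e^{P}$ factors contribute terms of size $\sim|a|^{\rho}$ that your identity omits, and the only hypothesis that could tame them --- $\phi,1/\phi\in\mathcal{N}_+(\mathbb{C}^+)$ --- is never actually used in your estimates. This is not an accident: the global outer/mean-type-zero information is precisely what a local pair-by-pair computation cannot see.

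For contrast, the paper's proof is irreducibly global. Under the negation hypothesis the Lagrange-type series converge super-polynomially fast, so the remainders
$$
F_1=\frac{\tilde B}{B}-\sum_{t_n\in\mathcal{Z}_B}\frac{\tilde B(t_n)}{B'(t_n)(z-t_n)},
\qquad
F_2=\frac{B}{\tilde B}-\sum_{t_n\in\mathcal{Z}_{\tilde B}}\frac{B(t_n)}{\tilde B'(t_n)(z-t_n)}
$$
are \emph{entire of zero exponential type}; the identity \eqref{FF} then forces $\min(|F_1(z)|,|F_2(z)|)\lesssim 1$ on all of $\CC$ (here power separation is used only to run the maximum principle on small circles around the $t_n$), and the deep input is de Branges' Lemma 8 \cite{br}, which concludes that one of $F_1,F_2$ is constant; a second application to the pair $z\tilde B(z)$, $B(z)$ yields the final contradiction. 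If you want to salvage your approach, you would in effect have to reprove that minimum-modulus theorem by hand, since the super-logarithmic-versus-$C\log|a|$ dichotomy you need encodes exactly the same global rigidity; as it stands, the proposal has a genuine gap at its main estimate.
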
  }

\begin{proof} Assume the converse.
Then for any $M>0$ we have $|B'(t_n)|\geq|t_n|^M|\tilde{B}(t_n)|$,
$t_n\in \mathcal{Z}_{B}$ and an analogous estimate for
$|\tilde{B}'(t_n)|$, $t_n\in\mathcal{Z}_{\tilde{B}}$. Then
the function
$$
F_1(z):=\frac{\tilde{B}(z)}{B(z)}-\sum_{t_n\in\mathcal{Z}_B}
\frac{\tilde{B}(t_n)}{B'(t_n)(z-t_n)}
$$
is an entire function of zero exponential type
(note that by our assumptions on $B$ and $\tilde B$
the series on the right converges for any 
$z\in\mathbb{C}\setminus\mathcal{Z}_B$). The same is true
for the function
$$
F_2(z):=\frac{B(z)}{\tilde{B}(z)}-\sum_{t_n\in\mathcal{Z}_{\tilde{B}}}
\frac{B(t_n)}{\tilde{B}'(t_n)(z-t_n)}.
$$
Thus, for any $z\in \co$, we have the equality
\begin{equation}
\biggl{(}F_1(z)+\sum_{t_n\in\mathcal{Z}_B}\frac{\tilde{B}(t_n)}{B'(t_n)(z-t_n)}\biggr{)}
\cdot\biggl{(}F_2(z)+\sum_{t_n\in\mathcal{Z}_{\tilde{B}}}\frac{B(t_n)}{\tilde{B}'(t_n)(z-t_n)}\biggr{)}=1.
\label{FF}
\end{equation}
From this we conclude that for some $K>0$,
$$
\min(|F_1(z),|F_2(z)|)\lesssim 1, \qquad z\in 
\mathbb{C}\setminus\cup_nD(t_n,|t_n|^{-K}).
$$ 
Let us consider the
circles $C_n=\{w: |w-t_n|=|t_n|^{-K}\}$. We can choose $K$ so large
that both Cauchy transforms from \eqref{FF} are bounded on these
circles. Also, it follows from simple estimates of the Hadamard canonical 
products that $|B(x)| \asymp |B(y)|$ when $x,\, y\in C_n$,
and analogously $|\tilde{B}(x)| \asymp |\tilde{B}(y)|$. 
So, if $\min_{w\in C_n}(|F_1(w)|,|F_2(w)|)\lesssim1$, 
then $|F_1(w)|\lesssim 1$ or $|F_2(w)|\lesssim1$, $w\in C_n$. 
Using the maximum principe for
$F_1$ or $F_2$ we get that 
\begin{equation}
\min(|F_1(z),|F_2(z)|)\lesssim 1, \qquad z\in\mathbb{C}.
\label{brl}
\end{equation}

{  It is a deep result by de Branges \cite[Lemma 8]{br} that if two entire functions 
$F_1$ and $F_2$ of zero exponential type satisfy \eqref{brl}, then 
either $F_1$ or $F_2$ is a constant function. Then from equation 
\eqref{FF} we get that both $F_1$ and 
$F_2$ are non-zero constants.

Note that the pair of functions $z\tilde{B}(z)$ and $B(z)$ also satisfies the conditions of Proposition \ref{ordlemma}.
Repeating the above arguments, we get that the function
 $$\tilde{F}_1(z):=\frac{z\tilde{B}(z)}{B(z)}-\sum_{t_n\in\mathcal{Z}_B}\frac{t_n\tilde{B}(t_n)}{B'(t_n)(z-t_n)}$$ is a non-zero constant. Hence, $U(z):=zF_1(z)-\tilde{F}_1(z)$ is a linear function and $U(iy)=o(|y|)$, $y\rightarrow\infty$. This contradiction proves  Proposition \ref{ordlemma}. }
\end{proof}


\subsection{End of the second proof of Theorem \ref{ordering}} Assume
that (i) in Proposition \ref{ordlemma} holds. Dividing if necessary
$B$ by a polynomial we may assume that
$|B'(t_{n_l})|\leq|\tilde{B}(t_{n_l})|$. Hence, we may construct a
lacunary canonical product $U_1$ such that
$\mathcal{Z}_{U_1}\subset\mathcal{Z}_B$ and
$$|B'(t_n)|\leq|\tilde{B}(t_n)|,\qquad t_n\in\mathcal{Z}_{U_1}.$$
Let $U_2$ be another lacunary product with zeros in $\{\Im z\geq
1\}$ such that
\begin{equation}
|U_2(t_n)|=o(|U_1(t_n)|),\qquad n\rightarrow\infty,\qquad t_n\in
T\setminus\mathcal{Z}_{U_1}, \label{Ueq}
\end{equation}
\begin{equation}
|U_2(t_n)|=o(|U_1'(t_n)|),\qquad t_n\in T. \label{Ueq2}
\end{equation}
This may be achieved if we choose zeros of $U_2$ to be much sparser
than the zeros of $U_1$. Let us show that in this case
$$
f:=A_1\cdot\frac{U_2}{U_1}\in\he,
$$
which contradicts the localization. Since $A_1$ is in $\he$, while
$U_1$ and $U_2$ are lacunary products, it is clear that $f/
E$ and $f^*/ E$ are in the Smirnov class
$\mathcal{N}_+(\mathbb{C}^+)$ and $|f(iy)/E(iy)|\rightarrow0$, $|y|\rightarrow\infty$. 
To apply Theorem \ref{t26}, it remains to show that
$$
\sum_{t_n\in T}\frac{|f(t_n)|^2}{|A'(t_n)|^2\mu_n}<\infty.
$$
Since $f$ vanishes on $\mathcal{Z}_{A_1}\setminus\mathcal{Z}_{U_1}$, 
we need to estimate
the sums over $\mathcal{Z}_{A_2}$ and $\mathcal{Z}_{U_1}$.
By \eqref{Ueq} and Lemma \ref{A2M}, we have
$$
\sum_{t_n\in\mathcal{Z}_{A_2}}\frac{|f(t_n)|^2}{|A'(t_n)|^2\mu_n}
=\sum_{t_n\in\mathcal{Z}_{A_2}}\frac{|U_2(t_n)|^2}{|U_1(t_n)|^2}
\cdot\frac{1}{|A'_2(t_n)|^2\mu_n}<\infty
$$
To estimate the sum over
$\mathcal{Z}_{U_1}$ note first that $BA_0$ divides
$A=\tilde{B}A_0\tilde{A}_2$, whence $B$ divides $\tilde{A_2}$.
Thus $\mathcal{Z}_{U_1}\subset\mathcal{Z}_{\tilde{A_2}}$. Also,
for $t_n\in\mathcal{Z}_{U_1}$,
\begin{equation}
|A'_1(t_n)|=|B'(t_n)|\cdot|A_0(t_n)|\leq|A_0(t_n)|\cdot|\tilde{B}(t_n)|=|\tilde{A}_1(t_n)|.
\label{Ueq3}
\end{equation}
Now by \eqref{Ueq}, \eqref{Ueq2}, \eqref{Ueq3} and Lemma \ref{A2M}
applied to $\tilde{A_2}$ we have
$$
\sum_{t_n\in\mathcal{Z}_{U_1}}\frac{|f(t_n)|^2}{|A'(t_n)|^2\mu_n}=
\sum_{t_n\in\mathcal{Z}_{U_1}}
\frac{|U_2(t_n)|^2}{|U_1'(t_n)|^2}\cdot\frac{|A'_1(t_n)|^2}{|\tilde{A}_1(t_n)|^2|\tilde{A}_2(t_n)|^2\mu_n}
$$
$$
\leq \sum_{t_n\in\mathcal{Z}_{U_1}}\frac{1}{|\tilde{A}_2(t_n)|^2\mu_n}<\infty.
$$
Thus, $f\in\he$ and this contradiction completes the proof of Theorem
\ref{ordering}. \qed


\section{Description of spaces having localization property of type 2}
In this section we prove Theorem \ref{type2}.


\subsection{Sufficiency\label{2suf}} 
Let $A$ be a function real on
$\mathbb{R}$ with simple real zeros in $T$ and $A=A_1A_2$, where
$A_2$ is the Hamburger class function from (i).
Let
$\he=A\mathcal{H}(T,\mu)$ be the associated de Branges space and
let $\mathcal{H}(E_2)$ be a de Branges space constructed from
$T_2$, $\mu|_{T_2}$, i.e.,
$\mathcal{H}(E_2)=A_2\mathcal{H}(T_2,\mu|_{T_2})$.

{  By the hypothesis, the orthogonal complement $\mathcal{L}$ to the polynomials 
in $L^2(T_2,\mu|_{T_2}) =\ell^2(\mu|_{T_2})$ is finite-dimensional. 
If $\{d_n\} \in \ell^2(\mu|_{T_2})\setminus \mathcal{L}$, then there exists
a nonzero moment for the sequence $\{d_n\}$, that is, 
$\sum_{t_n \in T_2} \mu_n d_n t_n^K \ne 0$ for some $K\in\mathbb{N}_0$.
If $f(z) = \sum_{t_n \in T_2} \frac{\mu_nd_n}{z-t_n}$ is the corresponding
function from $\ho(T_2, \mu|_{T_2})$, then, analogously to the arguments from
Subsection \ref{pd1}, we obtain that for any $M>0$ the function $f$
has a zero in $D(t_n, |t_n|^{-M})$, $t_n \in T_2$, 
when $n$ is sufficiently large. Thus, for any function in 
$\ho(T_2, \mu|_{T_2})$ except some finite-dimensional subspace, its zeros
are localized near the whole set $T_2$. 

Now let $\mathcal{G}$ be the subspace of the de Branges space 
$\mathcal{H}(E_2)$ defined by 
$$
\mathcal{G} = \bigg\{A_2\sum_{t_n \in T_2} \frac{\mu_nd_n}{z-t_n}:\, \{d_n\}
\in \mathcal{L} \bigg\}.
$$
This is a finite-dimensional subspace of $\mathcal{H}(E_2)$ 
and it is easy to see that $F\in \mathcal{G}$ if and only if
$F \in \mathcal{H}(E_2)$ and $|F(iy)/A(iy)| = o(|y|^{-M})$, $|y|\to \infty$,
for any $M>0$. It follows that $\mathcal{G}$ satisfies the axioms of a de Branges 
space and, hence, is a de Branges subspace of $\mathcal{H}(E_2)$.
Since $\mathcal{G}$ is finite-dimensional, it consists of the functions 
of the form $SP$ where $S$ is some fixed zero-free function which is real on $\RR$
and $P$ is any polynomial up to some fixed degree $L$. Replacing $A$ by $A/S$ we may 
assume that $\mathcal{G}$ consists of polynomials. }

Thus we conclude that $\mathcal{H}(E_2)$ has the localization
property and for any $F\in\mathcal{H}(E_2)$ we either have
$T_F=\emptyset$ (i.e., $F$ is a polynomial) or $T_F=T_2$.

Let $f\in\mathcal{H}(T,\mu)$, $f(z)=\sum_{t_n\in
T}\frac{c_n\mu^{1/2}_n}{z-t_n}$. Since,
$|A_2(t_n)|\mu^{1/2}_n$ tends to zero faster than any power of
$t_n\in T_1$ when $|t_n|\to \infty$, we have
\begin{equation}
A_2(z)\sum_{t_n\in
T_1}\frac{c_n\mu^{1/2}_n}{z-t_n}=\sum_{t_n\in
T_1}\frac{A_2(t_n)c_n\mu^{1/2}_n}{z-t_n}+H(z) \label{Heq}
\end{equation}
for some entire function $H$ (note that the residues on the left
and the right coincide). Moreover, it is easy to see (using Theorem \ref{t26}) 
that $H\in\mathcal{H}(E_2)$. Indeed, we need only to verify that
$\frac{|H(iy)|}{|A_2(iy)|}\rightarrow0$, $|y|\rightarrow\infty$
(which is obvious since the Cauchy transforms tend to zero, while
$|A_2(iy)|\rightarrow\infty$, $|y|\rightarrow\infty$ for the 
Hamburger class functions), and
$$\sum_{t_n\in T_2}\frac{|H(t_n)|^2}{|A'_2(t_n)|^2\mu_n}<\infty,$$
which is true since the Cauchy transform on the right hand side of
\eqref{Heq} is bounded on $T_1$.

Note also that $F(z):=A_2(z)\sum_{t_n\in
T_2}\frac{c_n\mu^{1/2}_n}{z-t_n}$ is in $\mathcal{H}(E_2)$ by
its definition. Thus,
$$
A_2(z)f(z)=\sum_{t_n\in T_1}\frac{c_nA_2(t_n)\mu^{1/2}_n}{z-t_n}+H(z)+F(z),
$$
with $H+F\in\mathcal{H}(E_2)$. Put
$$
g(z)=\sum_{t_n\in T_1}\frac{c_nA_2(t_n)\mu^{1/2}_n}{z-t_n}.
$$
Assume that $H+F\neq0$. Then, either, $H+F$ is a polynomial or the
zeros of $H+F$ are localized near $T_2$ up to a finite set and,
thus, $|H(t_n)+F(t_n)|>|A_2(t_n)|\cdot|t_n|^{-M}$ for some $M>0$
whenever $t_n\in T_1$ is sufficiently large. In each of the cases
$|H(t_n)+F(t_n)|\gtrsim1$ for $z\in D(t_k,r_k)$, $t_k\in T_1$,
$r_k=|t_k|^{-K}$, with $K$ sufficiently large. { Since, 
$|g(z)|\to 0$ whenever $|z-t_k|=r_k$ and $k\rightarrow\infty$,} we
conclude by the Rouch\'e theorem that $g+H+F$ has exactly one zero
in each $D(t_k,r_k)$, $t_k\in T_1$, except possibly a finite
number. Thus, $f$ has zeros near the whole set $T_1$ and also near
$T_2$ if $H+F$ is not a polynomial (again apply the Rouch\'e
theorem to small disks $D(t_k,r_k)$, $t_k\in T_2$,
$r_k=|t_k|^{-K}$, and use the fact that $|H+F|\gtrsim1$,
$|z-t_k|=r_k$).

It remains to consider the case $H+F=0$. Since  the polynomials
are dense in $L^2(T,\tilde{\mu})$, the space
$\mathcal{H}(T,\tilde{\mu})$ has the strong localization property,
and so $T_g=T_1$ up to a finite set. Also, $Af$ vanishes on $T_2$
and we conclude that $T_f=T$.


\subsection{Necessity\label{2nec}} Assume that $\mathcal{H}(T,\mu)$ has the
localization property of type $2$ and let
$\he=A\mathcal{H}(T,\mu)$ be some associated de Branges space. Let
$f$ be a function from $\mathcal{H}(T,\mu)$ such that
$\#(T\setminus T_f)=\infty$. Then, by Lemma \ref{TfLemma} there
exists $T_1$ ($T_1=T_f$ up to a finite set) such that there exists
a function $A_1$ with simple zeros in $T_1$ and $A_1\in\he$. We
now may write $A=A_1A_2$ for some entire $A_2$ with
$\mathcal{Z}_{A_2}=T_2$. The necessity of condition (i) 
follows from Lemma \ref{A2M}. 
 Note also that by our hypothesis
(localization of type $2$) zeros of any function $F \in\he$ are
localized near $T$ or near $T_1$.

Hence,  zeros of any function $F$ of the form 
$A_2(z)\sum_{t_n\in T_2}\frac{c_n\mu^{1/2}_n}{z-t_n}$, $\{c_n\}\in\ell^2$,
either form a finite set ($F$ is a polynomial) or 
are localized near $T_2$. It remains to prove that  the 
degrees of all polynomials $P\in\mathcal{H}(E_2)$ are uniformly bounded.

Let us show that the
property that $PA_1\in\he$ for any polynomial $P$ contradicts the
localization property. Indeed, let $\mathcal{H}_0$ be the de
Branges subspace of $\he$ constructed as in Lemma
\ref{subconstr}, namely,
$$\mathcal{H}_0:=\closspan_{\he}\biggl{\{}\frac{A_1(z)}{z-t_n}\biggr{\}}_{t_n\in T_1}.$$
Moreover, fix a sequence of polynomials $P_k$ with simple real
zeros disjoint with $T$, and put
$$\mathcal{H}_k:=\closspan_{\he}\biggl{\{}\frac{P_k(z)A_1(z)}{z-t_n}\biggr{\}}_{t_n\in T_1\cup\mathcal{Z}_{P_k}}.$$
Each $\mathcal{H}_k$ is a de Branges subspace of $\he$. Let us
show that
\begin{equation}
\dim(\mathcal{H}_{k+1} \ominus \mathcal{H}_k)=1,\qquad \text{ for any
} k\geq 0. \label{keq}
\end{equation}
If \ref{keq} is proved, then we conclude that the space
$\tilde{\mathcal{H}}:=\closspan(\cup_k\mathcal{H}_k)$ 
contains no subspace of codimension $1$, 
a contradiction to Theorem \ref{chain}. To prove
\eqref{keq} note that for any function from $\mathcal{H}_{k-1}$ of
the form $F(z):=\frac{P_{k-1}(z)A_1(z)}{z-\lambda}$, $\lambda\in
T_1\cup\mathcal{Z}_{P_{k-1}}$, we have $zF(z)\in\mathcal{H}_k$.
Since $\mathcal{H}_k$ contains a subspace of codimension $1$ we
conclude that $\mathcal{H}_{k-1}$ is of codimension $1$ in
$\mathcal{H}_k$.

Thus, (i) and (ii) are proved. Assume that (iii) is not
satisfied, that is, the polynomials, are not dense in
$\mathcal{H}(T_1,\tilde{\mu})$, and so this space does not have
the strong localization property. Choose $g(z)=\sum_{t_n\in
T_1}\frac{c_nA_2(t_n)\mu^{1/2}_n}{z-t_n}\in\mathcal{H}(T_1,\tilde{\mu})$
such that the zeros of $g$ are not localized near $T_1$ (which
means that there exists an infinite sequence of disks
$D(t_{n_j},|t_{n_j}|^{-M})$, $t_{n_j}\in T_1$, such that
$\#D(t_{n_j},|t_{n_j}|^{-M})\cap \mathcal{Z}_g=0$). Now put,
$$H(z)=A_2(z)\sum_{t_n\in T_1}\frac{c_n\mu^{1/2}_n}{z-t_n}-g(z).$$
The function $H$ is entire and, as in the proof of sufficiency,
$H\in\mathcal{H}(E_2)$, where
$\mathcal{H}(E_2)=A_2\mathcal{H}(T,\mu|_{T_2})$. This means
that $H$ can be written as
$$H(z)=-A_2(z)\sum_{t_n\in T_2}\frac{c_n\mu^{1/2}_n}{z-t_n}\quad
\text{ for some } \{c_n\}\in\ell^2.$$
Now put
$$
f(z)=\sum_{t_n\in T}\frac{c_n\mu^{1/2}_n}{z-t_n}.
$$
By the construction,
$$
f(z)=\frac{H(z)}{A_2(z)}+\frac{g(z)}{A_2(z)}-\frac{H(z)}{A_2(z)}=\frac{g(z)}{A_2(z)}
$$
and $Af=A_1g$. However, the zeros of $A_1g$ are not localized near
the whole $T_1$, a contradiction.


\subsection{Localization of type $N$} Using the arguments 
from the proof of Theorem \ref{type2} we can find the analogous description of the spaces with localization property of type $N$.

\begin{theorem}
\label{Nloc}
Let $\mathcal{H}(T,\mu)$ be a space with power separated $T$. 
The space $\mathcal{H}(T,\mu)$  has the localization property of type $N$
if and only if there exist sets $W_j\subset T$, $1\leq j\leq N$, such that

\begin{enumerate}
\begin{item}
$W_1\subset W_2\subset...\subset W_N=T$ and 
$\#(W_{j+1}\setminus W_j)=\infty$\textup;
\end{item}

\begin{item}
There exist entire functions $B_j$, $1\leq j < N$ of zero exponential type such that $\mathcal{Z}_{B_j}=T\setminus W_j$, (in particular, $B_N\equiv const$) and entire functions $B_{j+1}/ B_j$, $1\leq j \leq N-1$, are of Hamburger class\textup;
\end{item}

\begin{item}
The polynomials belong to the spaces $L^2(W_{j+1}\setminus W_j, 
\sum_{t_n\in W_{j+1}\setminus W_j}|B_{j+1}(t_n)|^2\mu_n\delta_{t_n})$, 
$1\leq j \leq N-1$, they are not dense there,  
but their closure is of finite codimension in these spaces\textup;
\end{item}

\begin{item}
The polynomials belong to the space $L^2(W_1, 
\sum_{t_n\in W_1} |B_{1}(t_n)|^2\mu_n\delta_{t_n})$
and are dense there.
\end{item}
\end{enumerate}

Moreover, in this case, for any nonzero $f\in \mathcal{H}(T,\mu)$ its attraction set $T_f$ 
coincides with one of the sets $W_j$, $1\leq j\leq N$.
\end{theorem}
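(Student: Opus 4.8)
The plan is to argue by induction on $N$, using the two extreme cases already in hand: $N=1$ is exactly Theorem \ref{strlocal} (strong localization equals polynomial density), and $N=2$ is Theorem \ref{type2}. For the inductive step I would exploit the structural description from Subsection \ref{dbr}: a space has the localization property of type $N$ precisely when, in a maximal decreasing chain $\he = \mathcal{H}_0 \supset \mathcal{H}_1 \supset \cdots$ of de Branges subspaces with $\dim(\mathcal{H}_j\ominus\mathcal{H}_{j+1})=1$, the tail $\mathcal{G} = \cap_{j\ge 0}\mathcal{H}_j$ is nonzero and carries the localization property of type $N-1$. Writing $\mathcal{G} = \tilde A\,\ho(\tilde T,\tilde\mu)$, the set $\tilde T$ turns out to be the largest proper attraction set $W_{N-1}$, so that removing the top block $T\setminus W_{N-1}$ lowers the type by one.

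\emph{Necessity.} First I would invoke Theorem \ref{ordering}: the attraction sets of $\mathcal{H}(T,\mu)$ are linearly ordered up to finite sets, and since the space has type $N$ there are exactly $N$ of them, giving the chain $W_1\subset\cdots\subset W_N=T$ with $\#(W_{j+1}\setminus W_j)=\infty$, which is (i). For (ii), each $W_j$ equals $T_f$ for some $f$; Lemma \ref{TfLemma} produces an entire $A_1^{(j)}\in\he$ vanishing exactly on $W_j$, and the factorization $A = A_1^{(j)}B_j$ together with Lemma \ref{A2M} shows that $B_j$ is of zero exponential type and of Hamburger class with $\mathcal{Z}_{B_j}=T\setminus W_j$; applying Lemma \ref{A2M} inside the subspace associated with $W_{j+1}$ then yields the Hamburger-class condition on the successive ratios in (ii). The measure-theoretic conditions (iii) and (iv) follow by peeling off the top block: passing to $\mathcal{G}$, which has type $N-1$ and spectral support $W_{N-1}$, the induction hypothesis supplies the density statement (iv) on the bottom block $W_1$ and the finite-codimension statements (iii) on the blocks $W_{j+1}\setminus W_j$ for $j\le N-2$, while the single remaining block $W_N\setminus W_{N-1}$ is handled exactly as the set $T_2$ in Subsection \ref{2nec}, its near-critical finite-codimension non-density being forced by the fact that $\mathcal{G}$ is the full intersection of the chain.

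\emph{Sufficiency.} Conversely, given data $(W_j,B_j)$ obeying (i)--(iv), I would reconstruct the localization structure block by block, mimicking Subsection \ref{2suf}. For an arbitrary nonzero $f = \sum_{t_n\in T}\frac{c_n\mu_n^{1/2}}{z-t_n}$ I would multiply by the appropriate $B_j$, split the resulting Cauchy transform along the blocks $W_{j+1}\setminus W_j$, and use Theorem \ref{t26} to recognize the entire pieces as elements of the de Branges spaces built from the successive Hamburger factors. Condition (iv) forces, via the argument of Subsection \ref{pd1} and a Rouch\'e comparison on the disks $D(t_n,|t_n|^{-M})$, that the zeros of the bottom piece fill the whole block $W_1$; conditions (iii) force each intermediate block $W_{j+1}\setminus W_j$ to be either swept entirely into $T_f$ or omitted entirely, according to whether the corresponding entire remainder is a polynomial. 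Tracking these alternatives shows that $T_f$ is always exactly one of the $W_j$, which is the final assertion, and the induction hypothesis guarantees that minimality of $N$ (that all $N$ values actually occur) is inherited from $\mathcal{G}$.

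The genuinely delicate point is the inductive reduction itself: one must check that passing from $\he$ to the tail $\mathcal{G}$ both lowers the type by exactly one and transports the data $(W_j,B_j,\mu)$ to the corresponding data on $\mathcal{G}$ without loss. Concretely, the hard part will be verifying that the top Hamburger factor $B_{N-1}$ and the block weights $|B_{j+1}(t_n)|^2\mu_n$ are compatible with the representation $\mathcal{G}=\tilde A\,\ho(\tilde T,\tilde\mu)$, so that the finite-codimension condition (iii) on $W_N\setminus W_{N-1}$ matches condition (ii) of Theorem \ref{type2} at the top of the chain while all lower-index conditions descend to the inductive hypothesis. Everything else is the now-routine combination of Theorem \ref{t26}, Lemma \ref{A2M}, and the Rouch\'e counting already carried out for $N=2$.
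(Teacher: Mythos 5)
Your proposal is correct and follows essentially the same route as the paper: induction on $N$ that peels off the top block, realizes the inductive space as the Cauchy transforms over $W_{N-1}$ with weights $|B_{N-1}(t_n)|^2\mu_n$ (the paper defines this weighted space directly, which is precisely your tail $\mathcal{G}=\cap_{j\geq0}\mathcal{H}_j$ in its spectral representation), and reuses the type-$2$ arguments of Subsections \ref{2suf} and \ref{2nec}, with Lemmas \ref{subconstr} and \ref{A2M} giving the necessity of (i) and (ii). The paper's own proof is exactly this sketch, at a comparable level of detail.
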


\begin{proof}
The sufficiency of the conditions (i)--(iv) can be obtained by induction on $N$. It is easy to check that $|B_{N-1}(t_n)|\mu^{1\slash2}_n=o(|t_n|^M)$,  $t_n\in W_{N-1}$, $|t_n|\rightarrow\infty$ for any $M$.
Put
$$\mathcal{H}:=\biggl{\{}\sum_{t_n\in W_{N-1}}\frac{c_nB_{N-1}(t_n)\mu^{1\slash2}_n}{z-t_n}: \{c_n\}\in\ell^2\biggr{\}}.$$
From the induction hypothesis we know that $\mathcal{H}$ has the localization property of type $N-1$. Now we can repeat the arguments from the Subsection \ref{2suf}.

The necessity of conditions (i) and (ii) can be derived from Lemmas \ref{subconstr} and \ref{A2M} respectively. The necessity
of conditions (iii) and (iv) can be obtained by induction on $N$ using the arguments from Subsection \ref{2nec}.
\end{proof}

In contrast to the case of the localization of type $N$, 
the spaces with localization near infinitely many attraction sets may have
a very complicated structure since the indivisible intervals can accumulate
on the left in many different ways. It is an interesting problem to
describe these sets analytically (say, as zero sets of entire 
functions from some special classes).


\end{document}